\newcommand{\RN}[1]{%
	\textup{\uppercase\expandafter{\romannumeral#1}}%
}
\def\bp{{\bar\partial}}
\def\bfs{\boldsymbol}
\def\pa{\partial}
\def\wh{\widehat}
\def\Re{ \mathrm{Re}}
\def\KK{\mathcal{K}}
\def\C{\mathbb{C}}
\def\H{\mathbb{H}}
\def\P{\mathbf{P}}
\def\R{\mathbb{R}}
\newcommand{\Pf}{{\textup{Pf}}}
\newcommand{\erfc}{\operatorname{erfc}}
\newcommand{\erf}{\operatorname{erf}}
\newcommand{\bfR}{\mathbf{R}}
\newcommand{\bfkappa}{{\bm \varkappa}}
\newcommand{\Ai}{\operatorname{Ai}}
\newcommand{\re}{\operatorname{Re}}
\newcommand{\im}{\operatorname{Im}}
\theoremstyle{plain}
\newtheorem*{thm*}{Theorem}
\newtheorem{thm}{Theorem}[section]
\newtheorem{lem}[thm]{Lemma}
\newtheorem{prop}[thm]{Proposition}
\newtheorem*{prop*}{Proposition}
\newtheorem*{lem*}{Lemma}
\theoremstyle{definition}
\newtheorem*{eg*}{Example}
\newtheorem*{egs*}{Examples}
\newtheorem*{def*}{Definition}
\newtheorem*{Q*}{Question}
\theoremstyle{remark}
\newtheorem*{rmk*}{Remark}
\newtheorem*{rmks*}{Remarks}
\newcommand{\abs}[1]{\lvert#1\rvert}
\numberwithin{equation}{section}
\begin{document}
\title[Wronskian structures of planar symplectic ensembles]{Wronskian structures of planar symplectic ensembles}

%%%%%%%%%%%%%%%%%%%%%%%%%%%%% author %%%%%%%%%%%%%%%%%%%%%%%%%%%%
\author{Sung-Soo Byun}
\address{Center for Mathematical Challenges, Korea Institute for Advanced Study, 85 Hoegiro, Dongdaemun-gu, Seoul 02455, Republic of Korea}
\email{sungsoobyun@kias.re.kr}

\author{Markus Ebke}
\address{Department of Mathematics, Friedrich-Alexander-Universität Erlangen-Nürnberg, Cauerstrasse 11, 91058 Erlangen, Germany}
\email{markus.ebke@fau.de}

\author{Seong-Mi Seo}
\address{Department of Mathematics, Chungnam National University, 99 Daehak-ro, Yuseong-gu, Daejeon 34134, Republic of Korea.}
\email{smseo@cnu.ac.kr}
%%%%%%%%%%%%%%%%%%%%%%%%%%%%% author %%%%%%%%%%%%%%%%%%%%%%%%%%%%

\thanks{Sung-Soo Byun was partially supported by Samsung Science and Technology Foundation (SSTF-BA1401-51), by the National Research Foundation of Korea (NRF-2019R1A5A1028324) and by a KIAS Individual Grant (SP083201) via the Center for Mathematical Challenges at Korea Institute for Advanced Study.
Markus Ebke was partially supported by Deutsche Forschungsgemeinschaft (IRTG 2235).
Seong-Mi Seo was partially supported by the KIAS Individual Grant (MG063103) at Korea Institute for Advanced Study and
by the National Research Foundation of Korea (2019R1A5A1028324, 2019R1F1A1058006).
}
\subjclass[2020]{Primary 60B20; Secondary 33C45
}

%\date{\today}

%%%%%%%%%%%%%%%%%%%%%%Abstract%%%%%%%%%%%%%%%%%%%%%%%%%%%%%%%%%%%
\begin{abstract}
We consider the eigenvalues of non-Hermitian random matrices in the symmetry class of the symplectic Ginibre ensemble, which are known to form a Pfaffian point process in the plane.
It was recently discovered that the limiting correlation kernel of the symplectic Ginibre ensemble in the vicinity of the real line can be expressed in a unified form of a Wronskian.
We derive scaling limits for variations of the symplectic Ginibre ensemble and obtain such Wronskian structures for the associated universality classes.
These include almost-Hermitian bulk/edge scaling limits of the elliptic symplectic Ginibre ensemble and edge scaling limits of the symplectic Ginibre ensemble with boundary confinement.
Our proofs follow from the generalised Christoffel-Darboux formula for the former and from the Laplace method for the latter.
Based on such a unified integrable structure of Wronskian form, we also provide an intimate relation between the function in the argument of the Wronskian in the symplectic symmetry class and the kernel in the complex symmetry class which form determinantal point processes in the plane.
\end{abstract}
%%%%%%%%%%%%%%%%%%%%%%Abstract%%%%%%%%%%%%%%%%%%%%%%%%%%%%%%%%%%%

\maketitle
%\tableofcontents
%%%%%%%%%%%%%%%%%%%%%%%introduction%%%%%%%%%%%%%%%%%%%%%%%%%%%%%%%%%

\section{Introduction}
In the study of integrable models in random matrix theory, the underlying structure is one of the most important features that makes asymptotic analysis possible.
%The integrable structure of random matrix theory is perhaps one of the most important features that makes asymptotic analysis possible.
We shall study such a structure for the planar symplectic ensemble $\bfs{\zeta}= \{ \zeta_j \}_{j=1}^N$ whose joint probability distribution follows
\begin{equation}\label{Gibbs}
d\P_N(\boldsymbol{\zeta}) = \frac{1}{Z_N} \prod_{j>k=1}^{N} \abs{\zeta_j-\zeta_k}^2 \abs{\zeta_j-\overline{\zeta}_k}^2 \prod_{j=1}^{N} \abs{\zeta_j-\overline{\zeta}_j}^2 e^{ -N  Q(\zeta_j) } \,  dA(\zeta_j),  \qquad (dA(\zeta):=\tfrac{1}{\pi}d^2\zeta),
\end{equation}
where $Z_N$ is the partition function.
Here $Q: \C \to \R$ is a suitable function satisfying the complex conjugation symmetry $Q(\zeta)=Q(\bar{\zeta})$, called the external potential.
For the special case when $Q(\zeta)=|\zeta|^2$, the measure \eqref{Gibbs} describes the distribution of the eigenvalues of quaternionic Gaussian random matrices, also known as the symplectic Ginibre ensemble \cite{ginibre1965statistical}.

It is well known \cite{MR2934715} that as $N \to \infty,$ the system $\bfs{\zeta}$ tends to occupy a certain compact subset $S \subset \C$ called the droplet.
We shall study the local statistics of the ensemble \eqref{Gibbs} in the vicinity of the real line.
For this purpose, we define the rescaled point process $\boldsymbol{z}=\{z_1,\dots,z_N\}$ as
\begin{equation} \label{rescaling}
z_j:=e^{-i \theta} \gamma_N^{-1} \cdot (\zeta_j-p), \qquad p \in S \cap \R,
\end{equation}
where $\gamma_N$ is an appropriate $N$-dependent factor called the microscopic scale, cf. \eqref{microscale Laplacian} and \eqref{microscale HW}.
Here $\theta\in\R$ is the angle of the outward normal direction at the boundary if $p\in\partial S$ and otherwise $\theta=0$.

By definition, the $k$-point correlation function $R_{N,k}$ is the normalised probability that $k$ of the eigenvalues lie in infinitesimal neighbourhoods of $z_1,\dots, z_k$, i.e.
\begin{equation}
R_{N,k}(z_1,\dots,z_k)
:= \lim_{\varepsilon \downarrow 0} \frac{ \mathbb{P}( \, \exists \text{ at least one particle in } \mathbb{D}(z_j,\varepsilon), j=1,\cdots,k ) }{\varepsilon^{2k}},
\end{equation}
see also \eqref{bfRNk def} for a more standard definition.
Recall that for a $2n \times 2n$ skew-symmetric matrix $A=(a_{j,k})$, its Pfaffian $\Pf(A)$ is given by
$$
\Pf(A)= \frac{1}{2^n n!} \sum_{ \sigma \in S_{2n} } \textup{sgn}(\sigma) \prod_{j=1}^n a_{ \sigma(2j-1),\sigma(2j) },
$$
where $S_{2n}$ is the symmetric group of order $(2n)!$.
Recently, it was shown in \cite{akemann2021scaling} that for the symplectic Ginibre ensemble, when $Q(\zeta)=|\zeta|^2$ and thus $S=\mathbb{D}(0,\sqrt{2})$, the $k$-point correlation function $R_{N,k}(z_1,\dots,z_k)$ converges, as $N \to \infty$, locally uniformly to the limit
\begin{equation} \label{R Pfaffian}
R_k(z_1,\dots,z_k)=\prod_{j=1}^k (\bar{z}_j-z_j) \Pf \Big[ e^{-|z_j|^2-|z_l|^2}
\begin{pmatrix}
\kappa(z_j,z_l) & \kappa(z_j,\bar{z}_l)
\smallskip
\\
\kappa(\bar{z}_j,z_l) & \kappa(\bar{z}_j,\bar{z}_l)
\end{pmatrix}
\Big]_{j,l=1}^k,
\end{equation}
where the pre-kernel $\kappa$ is of the form
\begin{equation} \label{kappa Wronskian}
	\kappa(z,w):=\sqrt{\pi} e^{z^2+w^2} \int_{E} W(f_{w},f_{z})(u) \, du.
\end{equation}
Here, $W(f,g):=fg'-gf'$ is the Wronskian, and
  \begin{equation} \label{fE NH bulk edge}
    	f_z(u):=\tfrac12 \erfc(\sqrt{2}(z-u)), \qquad
    	E:=\begin{cases}
    	    (-\infty,\infty) &\textup{for the bulk case},
    	    \smallskip
    	    \\
    	    (-\infty,0) &\textup{for the edge case}.
    	\end{cases}
    \end{equation}
(See \cite{BL2} for a related result.
We also refer to \cite{MR1986426,dubach2021symmetries} for the spectral radius of the symplectic Ginibre ensemble.)

We emphasise that the function $f_z$ is not necessarily unique.
For instance, the variation $\tilde{f}_z(u) := a f_z(u) + b(z)$ leads to the same pre-kernel in \eqref{kappa Wronskian} if $a = \pm 1$ and $b(z) = f_z(u)\vert_{\partial E}$.
Thus, instead of $f_z$ in \eqref{fE NH bulk edge}, one may take
\begin{equation} \label{f NH bulk edge alternative}
\begin{cases}
    f_z(u) := \tfrac12 \erf(\sqrt{2}(z-u)) & \text{for the bulk case}, \\
    f_z(u) := \tfrac12 \Big( \erfc(\sqrt{2}(z-u))-\erfc(\sqrt{2}z) \Big) & \text{for the edge case}.
\end{cases}
\end{equation}
Note in particular that the $1$-point function $R \equiv R_1$ is given by
\begin{equation} \label{R one pt f}
R(z)=(\bar{z}-z) e^{-2|z|^2} \kappa(z,\bar{z})
=  4\sqrt{\pi}\,\im z \, e^{-4(\im z)^2} \int_{E} \im [f_z'(u) f_{\bar{z}}(u)] \, du.
\end{equation}
For the symplectic Ginibre ensemble, it follows from $S=\mathbb{D}(0,\sqrt{2})$ that the real bulk case corresponds to the regime $p\in (-\sqrt{2},\sqrt{2})$, whereas the real edge case corresponds to the regime $p=\pm \sqrt{2}$.
We also remark that for the real bulk case, one can evaluate the integral in \eqref{kappa Wronskian} in terms of the error function, which corresponds to the representation of the pre-kernel in \cite{MR1928853,akemann2019universal}, see \cite[Remark 2.3.(ii)]{akemann2021scaling} for further details.

Let us now define
\begin{equation}  \label{K complex}
\mathcal{K}(z,w) := 2 \sqrt{\pi} \, e^{z^2+\bar{w}^2} \int_{E} f'_{z}(u)f'_{\bar{w}}(u)\,du.
\end{equation}
With the choice of \eqref{fE NH bulk edge}, it is easy to see that the function $\KK$ in \eqref{K complex} evaluates to
    \begin{equation} \label{K complex Ginibre}
  e^{-|z|^2-|w|^2}  \KK(z,w)=
    \begin{cases}
    2\,e^{-|z|^2-|w|^2+2z\bar{w}} &\textup{for the bulk case},
   \smallskip
   \\
    2\,e^{-|z|^2-|w|^2+2z\bar{w}} \erfc(z+\bar{w}) &\textup{for the edge case}.
    \end{cases}
    \end{equation}
Here, one may notice that \eqref{K complex Ginibre} corresponds to the limiting local correlation kernel of the complex Ginibre ensemble whose joint probability distribution is given by
\begin{align} \label{Gibbs complex}
\begin{split}
d\boldsymbol{\mathcal{P}}_N(\boldsymbol{\lambda}) &= \frac{1}{\mathcal{Z}_N} \prod_{j>k=1}^{N} \abs{\lambda_j-\lambda_k}^2  \prod_{j=1}^{N} e^{ -N  Q(\lambda_j) } \,  dA(\lambda_j)
\end{split}
\end{align}
with $Q(\zeta)=\abs{\zeta}^2$, which forms a determinantal point process, see e.g.~\cite{forrester2010log}.
In other words, a properly defined limiting local correlation function $\wh{R}_k(\wh{z}_1,\dots,\wh{z}_k)$ of the ensemble \eqref{Gibbs complex} is given by
\begin{equation} \label{Rk det}
\wh{R}_k(\wh{z}_1,\dots,\wh{z}_k)=\det \Big[ e^{-|\widehat{z}_j|^2-|\widehat{z}_l|^2} \KK(\widehat{z}_j,\widehat{z}_l) \Big]_{j,l=1}^k.
\end{equation}
In what follows, we call the ensemble \eqref{Gibbs complex} the complex counterpart to the symplectic ensemble \eqref{Gibbs}.

\medskip

The above discussions already lead us to the following natural questions.
\begin{itemize}
    \item \textbf{Question 1.} For a general potential $Q$ in the symplectic ensemble \eqref{Gibbs}, does the limiting pre-kernel $\kappa$ at $p\in\R$ have the Wronskian structure \eqref{kappa Wronskian} for some function $f_z$ and $E \subset \R$?
    \smallskip
    \item \textbf{Question 2.} If the answer to the first question is affirmative, is the function defined in \eqref{K complex} the kernel of the corresponding complex ensemble? cf. \eqref{Rk det}.
\end{itemize}
The main purpose of this study is to examine these questions for the elliptic Ginibre ensemble in the almost-Hermitian regime and the Ginibre ensemble with boundary confinements.

\section{Discussions of main results}

Beyond the standard scaling limits \eqref{fE NH bulk edge} arising from the symplectic Ginibre ensemble, we aim to derive further universality classes whose complex counterparts have been well studied.
In this section, we introduce our models and state the main results.

Until further notice, the microscopic scale in \eqref{rescaling} is given by
\begin{equation}\label{microscale Laplacian}
\gamma_N=\frac{1}{\sqrt{N \delta}}, \qquad   \delta:=\dfrac{\Delta Q(p)}{2}, \qquad (\Delta:=\pa \bp).
\end{equation}
This specific choice of the rescaling comes from the fact that the macroscopic density of $\bfs{\zeta}$ with respect to the area measure is $\frac12 \Delta Q,$ see \cite{MR2934715,ST97}.

\subsection{Symplectic elliptic Ginibre ensemble in the almost-Hermitian regime}

We first study the \emph{symplectic elliptic Ginibre ensemble}, a one-parameter family of random matrices indexed by a non-Hermiticity parameter $\tau \in [0,1)$.
This model describes an interpolation between the Gaussian symplectic ensemble $(\tau=1)$ and the symplectic Ginibre ensemble $(\tau=0)$.
Its eigenvalue statistics correspond to \eqref{Gibbs} with the potential
\begin{equation}\label{Q elliptic}
Q(\zeta):=\tfrac{1}{1-\tau^2}(|\zeta|^2-\tau \re \zeta^2).
\end{equation}
As the terminology ``elliptic'' indicates, the associated droplet $S$ is given by the ellipse
\begin{equation} \label{droplet ellipse}
S:= \{ x+iy \in \C \, | \, ( \tfrac{x}{\sqrt{2}(1+\tau)} )^2+( \tfrac{y}{\sqrt{2}(1-\tau)} )^2 \le 1 \}.
\end{equation}

From the general universality principle of random matrix theory, it can be expected that for any fixed $\tau \in [0,1)$, the local statistics of the ensemble in the large system coincide with the one \eqref{fE NH bulk edge} obtained from the case $\tau=0$.
Based on the skew-orthogonal Hermite polynomials introduced by Kanzieper \cite{MR1928853}, such a statement was recently proved in \cite{akemann2021skew} for $p=0$ and in \cite{byun2021universal} for general $p \in \R$.
(We mention that the local statistic at $p=0$ can be special for certain random matrix models \cite{byun2022almost,byun2022spherical,MR3066113,MR3279619,MR2881072}.)

On the other hand, when $\tau \to 1$ as $N \to \infty$ with an appropriate rate, the limiting correlation functions obtained from the double scaling limit are no longer described in terms of \eqref{fE NH bulk edge}, and it is natural to expect a non-trivial transition between the scaling limits of non-Hermitian and Hermitian random matrices.
In general, such a transition appears in the so-called \emph{almost-Hermitian regime} (or \emph{weak non-Hermiticity}) introduced in the series of works \cite{MR1634312,fyodorov1997almost,MR1431718} by Fyodorov, Khoruzhenko, and Sommers.

\begin{figure}[h!]
		\begin{center}
			\includegraphics[width=0.8\textwidth]{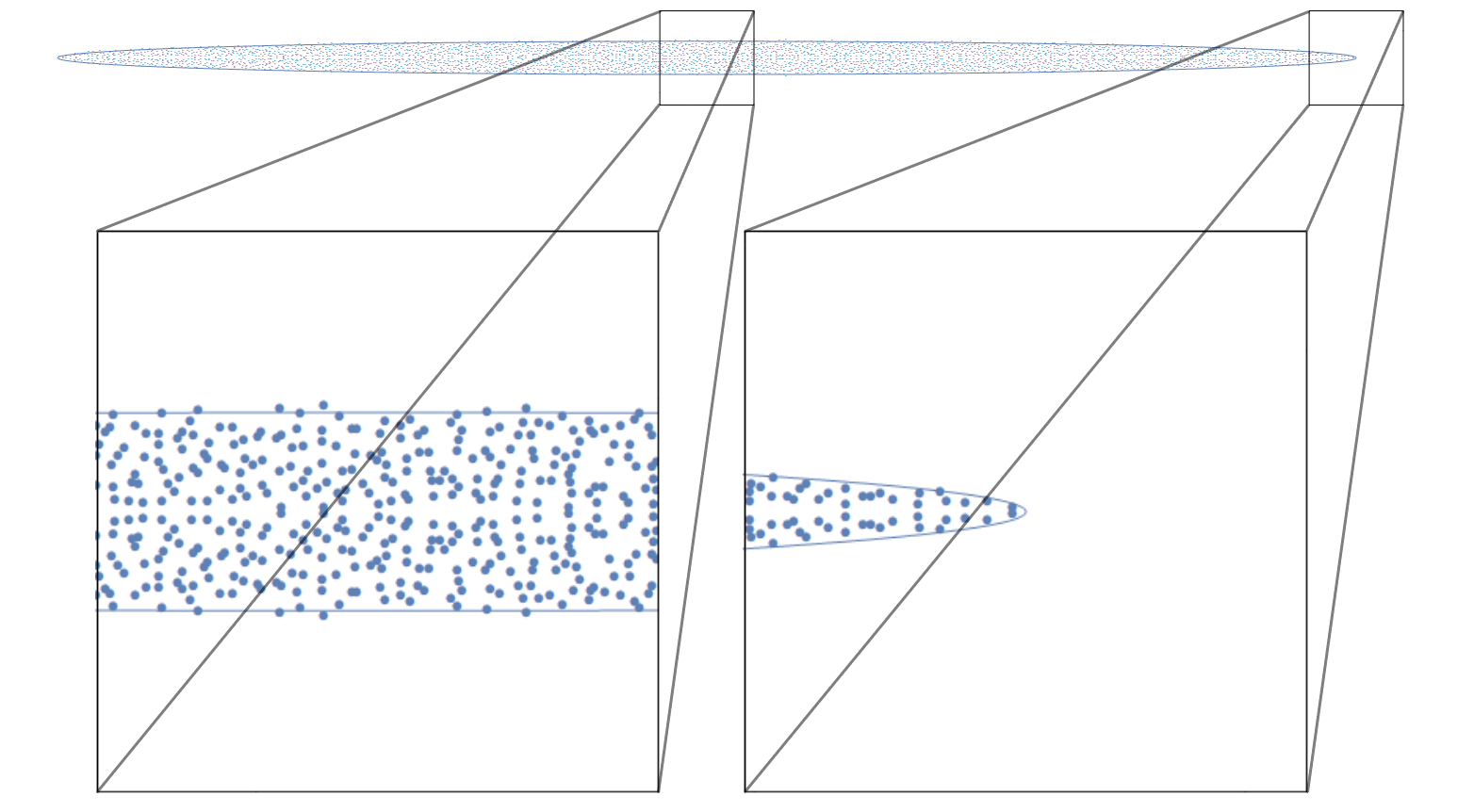}
		\end{center}
	\caption{Eigenvalues of the elliptic Ginibre ensemble in the almost-Hermitian regime} \label{Fig_QEGAH}
\end{figure}

In Theorems~\ref{Thm_AH bulk} and ~\ref{Thm_AH edge} below, we state the scaling limits of the symplectic elliptic Ginibre ensembles in the almost-Hermitian regime, see Figure~\ref{Fig_QEGAH}.
We emphasise that for the special cases $p=0$ and $p=\pm \sqrt{2}(1+\tau)$, (cf.~\eqref{droplet ellipse}) the scaling limits were studied in \cite{MR1928853} and \cite{MR3192169} respectively, based on the idea of Riemann sum approximation of skew-orthogonal Hermite polynomial kernels.
This method is very useful in finding an explicit formula of the limiting pre-kernel, but it is not easy to perform the asymptotic  analysis for general $p \in \R$ or to precisely control the error term.

Instead, we exploit the generalised Christoffel-Darboux formula introduced in \cite{byun2021universal}, which allows us to obtain unified proofs for any points on the real line and to perform precise asymptotic analysis.
(Indeed, this method can also be used to derive the subleading correction terms as well, see \cite{byun2021universal}.)
Furthermore, we describe the scaling limits in terms of the unified Wronskian form \eqref{kappa Wronskian} and show that the associated kernels $\KK$ of the form \eqref{K complex} correspond to those obtained from their complex counterparts.
These provide affirmative answers to the two questions in the previous section for the almost-Hermitian symplectic ensembles.

Our first main result is on the bulk scaling limit of the symplectic elliptic Ginibre ensemble in the almost-Hermitian regime.

\begin{thm} \label{Thm_AH bulk} \textbf{\textup{(Almost-Hermitian bulk scaling limit)}} Let $Q$ be the elliptic potential \eqref{Q elliptic} with
\begin{equation} \label{tau AH bulk}
\tau \equiv \tau_N=1-\frac{c^2}{2N}, \qquad c>0.
\end{equation}
Then for $p \in (-\sqrt{2}(1+\tau),\sqrt{2}(1+\tau))$, the $k$-point correlation function $R_{N,k}$ converges, as $N \to \infty$, locally uniformly to the limit $R_k$ of the form \eqref{R Pfaffian} with
 \begin{equation} \label{fE AH bulk}
    f_z(u):=\frac{1}{2\pi} \int_{I} e^{-t^2/2} \sin(2t(z-u))\,\frac{dt}{t}, \qquad E:=\R.
    \end{equation}
Here
\begin{equation} \label{I tilde c}
I:=(-\tilde{c},\tilde{{c}}),\qquad \tilde{c}:=c \sqrt{1-\tfrac{p^2}{8}}.
\end{equation}
\end{thm}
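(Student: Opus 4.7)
The strategy is to apply the generalised Christoffel--Darboux (CD) identity of \cite{byun2021universal} to the finite-$N$ pre-kernel and then carry out Plancherel--Rotach asymptotics for the Hermite polynomials in the double scaling \eqref{tau AH bulk}. I would first recall that, for the potential \eqref{Q elliptic}, Kanzieper's skew-orthogonal polynomials \cite{MR1928853} are explicit linear combinations of Hermite polynomials $H_n(\zeta/\sqrt{2\tau})$ with known skew-norms, so that the finite-$N$ pre-kernel $\kappa_N(z,w)$ admits an explicit double-sum representation over products $H_m(z/\sqrt{2\tau})H_n(w/\sqrt{2\tau})$ with $m,n$ of order $N$. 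The generalised CD identity then reduces this double sum to an integral representation involving Hermite polynomials of degree close to $N$. This integral form is what makes uniform asymptotics at an arbitrary bulk point $p\in\R$ tractable, in contrast to the Riemann-sum approach used for the special cases $p=0$ and $p=\pm\sqrt{2}(1+\tau)$ in \cite{MR1928853,MR3192169}.

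Next I would substitute the rescaling \eqref{rescaling}: for the potential \eqref{Q elliptic} one computes $\delta=1/(2(1-\tau_N^2))\sim N/c^2$, so $\gamma_N=1/\sqrt{N\delta}$ is of the almost-Hermitian order $1/N$. The Hermite arguments $(p+e^{-i\theta}\gamma_N z)/\sqrt{2\tau}$ therefore lie in the oscillatory Plancherel--Rotach regime for degree $\sim N$, and the $\cos$-type asymptotics produce an integrand whose phase---after the change of variable that rescales the CD auxiliary parameter to $t\in I$---becomes $2t(z-u)$, while the modulus factor produces the Gaussian weight $e^{-t^2/2}$. The endpoint $\tilde{c}=c\sqrt{1-p^2/8}$ in \eqref{I tilde c} comes directly from this change of variable: the real half-length of the ellipse \eqref{droplet ellipse} is $\sqrt{2}(1+\tau)\to 2\sqrt{2}$, so the local constraint at $p$ is exactly $1-(p/(2\sqrt{2}))^2=1-p^2/8$.

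It then remains to identify the resulting limiting expression with the Wronskian representation \eqref{kappa Wronskian} for $f_z$ as in \eqref{fE AH bulk} and $E=\R$, which amounts to rewriting the product of two Fourier-type integrals as $\int_\R W(f_w,f_z)(u)\,du$ and checking the prefactors; local uniform convergence of $R_{N,k}$ then follows from local uniform convergence of $\kappa_N$ together with continuity of the Pfaffian. The main obstacle I foresee is the asymptotic step: one needs Plancherel--Rotach expansions that are uniform simultaneously in $n\sim N$ and in the double scaling $\tau_N\to 1$, with error terms controlled well enough to justify interchange with the CD integration and to yield convergence locally uniformly in compact subsets of $\C$ for $z,w$. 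Any boundary terms arising from the CD identity must also be shown to vanish in the limit.
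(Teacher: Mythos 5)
You have identified the right main ingredients --- the generalised Christoffel--Darboux (CD) formula of \cite{byun2021universal} together with Plancherel--Rotach asymptotics for Hermite polynomials in the almost-Hermitian double scaling --- and these are indeed the tools the paper employs. However, your description of how the CD identity functions contains a misconception that leaves a genuine gap. The CD formula (Lemma~\ref{Lem_CDI skew}) is a \emph{differential} identity for the transformed finite-$N$ pre-kernel $\widehat{\kappa}_N$:
$\partial_z\widehat{\kappa}_N = 2(z-w)\widehat{\kappa}_N + \RN{1}_N - \RN{2}_N$,
where $\RN{1}_N$ is an orthogonal Hermite sum and $\RN{2}_N$ a rank-one correction. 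It does \emph{not} produce an integral representation of $\kappa_N$. A further CD identity at the orthogonal level (Lemma~\ref{Lem_CDI orthogonal}) is then used to differentiate $\RN{1}_N$ and $\RN{2}_N$ down to degree-$2N$ Hermite data, and the Plancherel--Rotach/Mehler--Heine asymptotics at that stage yield \emph{oscillatory closed forms} such as $\partial_z\RN{1}_N\sim\tfrac{4}{\sqrt{\pi}}e^{-\tilde{c}^2}e^{(z-w)^2}\sin(2\tilde{c}(z-w))$, not integrands over an auxiliary variable $t$. The $t$-integral over $I=(-\tilde{c},\tilde{c})$, and hence the Wronskian form, appears only after \emph{integrating} the limiting ODE, and that step requires two additional inputs that your plan does not mention: (i) initial values --- the paper imports $\RN{1}(\bar{w},w)$ from the known almost-Hermitian bulk kernel of the \emph{complex} elliptic Ginibre ensemble, and shows $\RN{2}_N(z,0)\to0$ by a separate Riemann--Lebesgue argument for the oscillatory sum $T_N(p)$ --- and (ii) the anti-symmetry of the pre-kernel in $(z,w)$, which is what singles out $\upsilon=e^{-z^2-w^2}\kappa$ among solutions of the ODE and gives uniqueness. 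Without these your derivation does not close.

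Two smaller remarks: for the elliptic potential one has $\delta=\Delta Q(p)/2 = \tfrac{1}{2(1-\tau_N^2)}\sim\tfrac{N}{2c^2}$, not $N/c^2$ (a factor of $2$ slip, harmless but worth fixing); and the verification that the Wronskian built from \eqref{fE AH bulk} with $E=\R$ reproduces the single sine-integral form \eqref{kappa t.i.} --- which is needed before the ODE-uniqueness argument can even be invoked --- rests on a short but nontrivial Fourier/Dirac-delta computation ($\int_\R e^{2iux}\,du=\pi\delta(x)$) that should be made explicit rather than treated as a routine identification of prefactors.
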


With the choice \eqref{fE AH bulk}, the pre-kernel $\kappa$ has an alternative representation
    \begin{equation} \label{kappa t.i.}
		\kappa(z,w)=  \frac{1}{\sqrt{\pi}}  e^{ z^2+w^2 }  \int_I e^{-u^2} \sin(2u(z-w) ) \frac{du}{u},
	\end{equation}
see Subsection~\ref{Subsec_AH bulk}.
For $p=0$, this form of the limiting pre-kernel was investigated in \cite{MR1928853}.
We emphasise that by means of Ward's equation, it was shown in \cite[Theorem 2.10]{akemann2021scaling} that   the pre-kernel of the form \eqref{kappa t.i.} is a unique translation invariant scaling limit of general planar symplectic ensembles.

Note in particular that with \eqref{fE AH bulk}, the kernel $\KK$ in \eqref{K complex} is given by
\begin{equation} \label{K AH bulk}
\mathcal{K}(z,w) = \frac{2}{\sqrt{\pi}} \, e^{z^2+\bar{w}^2}\int_I e^{-t^2} \cos(2t(z-\bar{w})) \,dt.
\end{equation}
The kernel \eqref{K AH bulk} corresponds to the one obtained from the complex elliptic Ginibre, see e.g.~\cite{fyodorov1997almost,akemann2016universality}.
As one may notice, the factor $\sqrt{1-p^2/8}$ in \eqref{I tilde c} corresponds to the density of the semi-circle law.
We refer to \cite{AB21} for the geometric interpretation of such a density term, which follows from the limiting shape of the droplet (also called cross-section convergence).

It is also easy to show that the function $f_z$ in \eqref{fE AH bulk} satisfies
\begin{equation}
	\lim_{c\to\infty}	f_{z}(u)= \tfrac12 \erf(\sqrt{2}(z-u)).
\end{equation}
Thus one can recover \eqref{fE NH bulk edge} (cf.~\eqref{f NH bulk edge alternative}) for the bulk case in the non-Hermitian limit when $c\to\infty$.

\begin{figure}[h!]
	\begin{subfigure}{0.32\textwidth}
		\begin{center}
			\includegraphics[width=\textwidth]{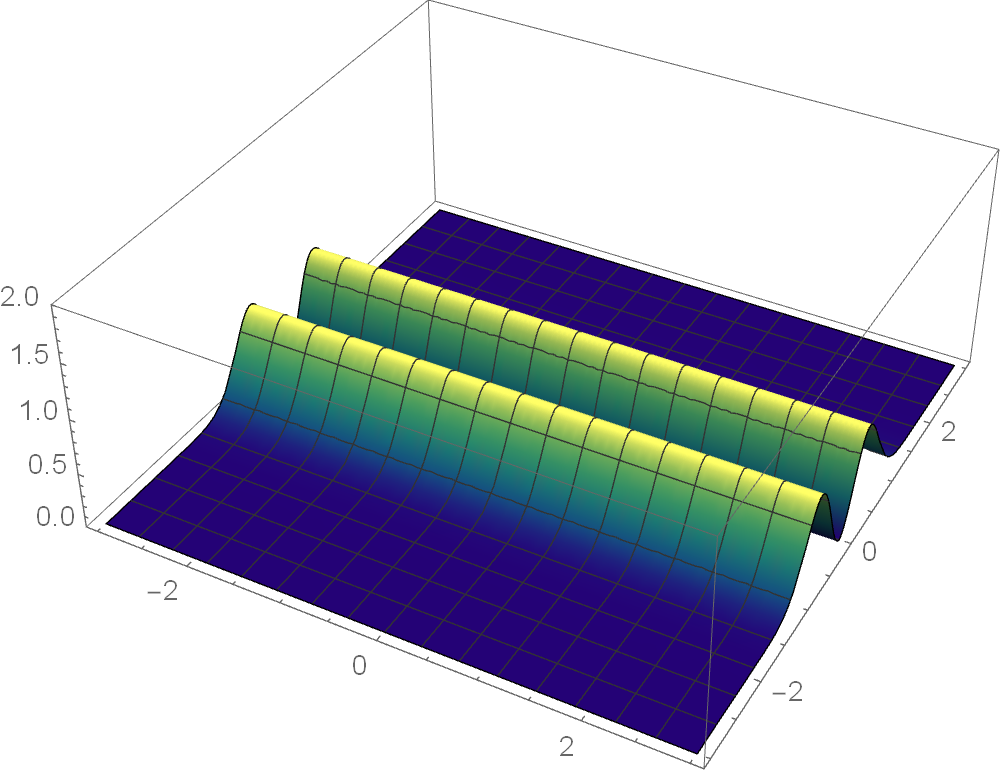}
		\end{center}
		\subcaption{$\tilde{c}=1$}
	\end{subfigure}
	\begin{subfigure}{0.32\textwidth}
		\begin{center}
			\includegraphics[width=\textwidth]{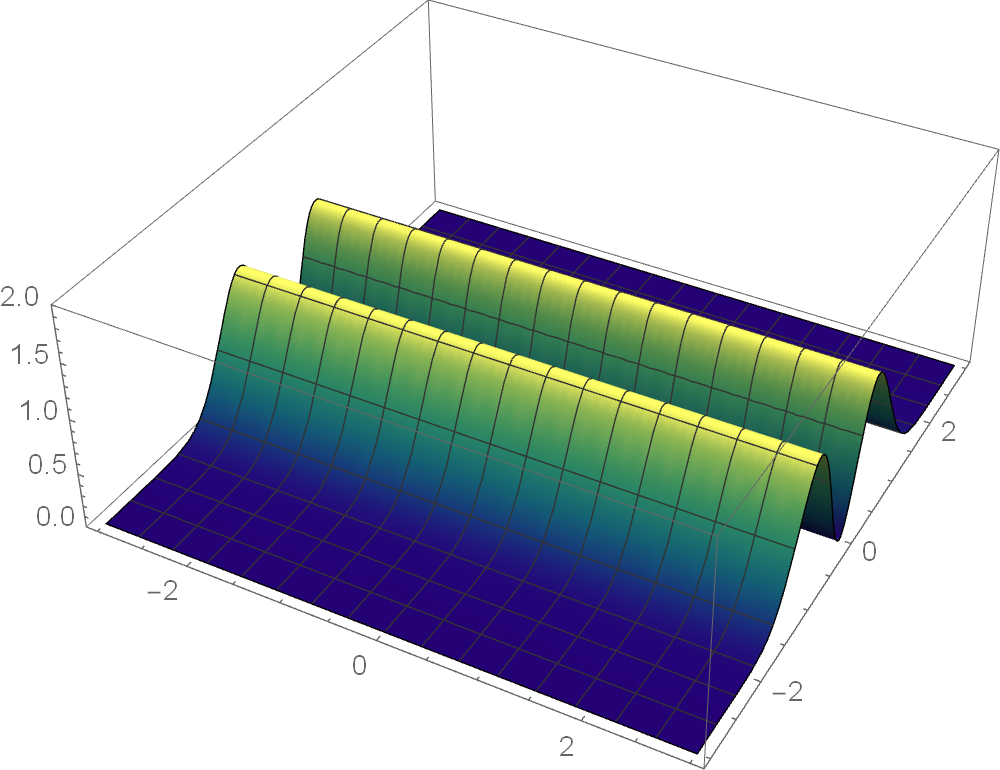}
		\end{center} \subcaption{$\tilde{c}=2$}
	\end{subfigure}
		\begin{subfigure}{0.32\textwidth}
		\begin{center}
			\includegraphics[width=\textwidth]{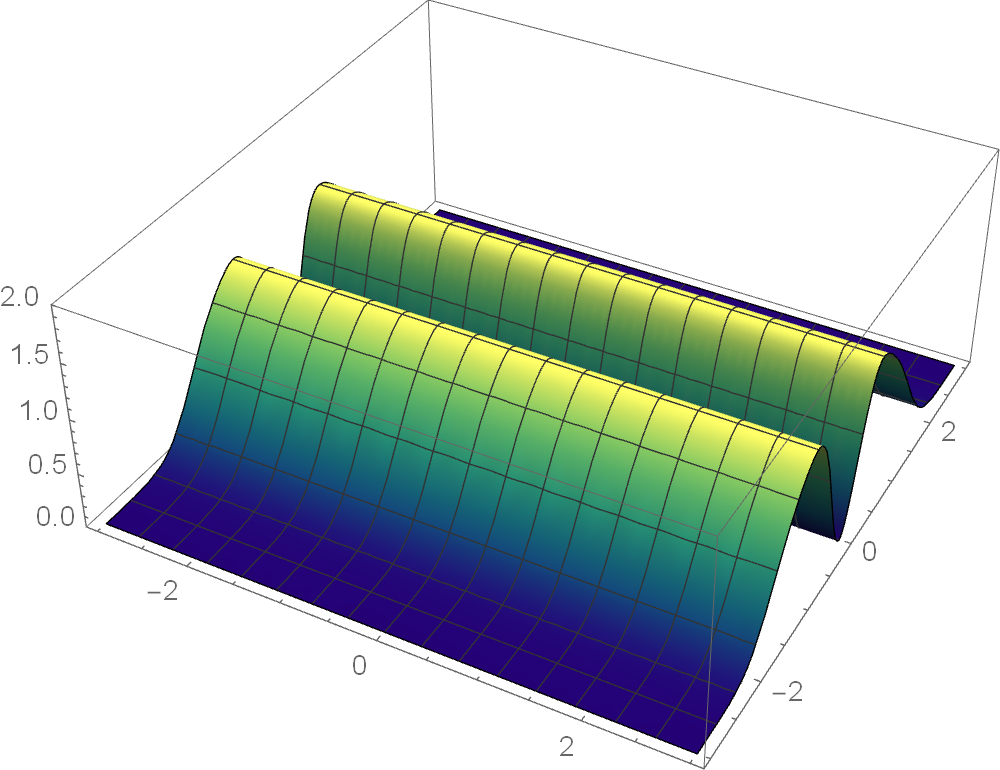}
		\end{center} \subcaption{$\tilde{c}=3$}
	\end{subfigure}
	\caption{ The graphs of the one-point function $R$ in \eqref{R one pt f} with \eqref{fE AH bulk}. } \label{Fig_R AH bulk}
\end{figure}

We now turn to the edge scaling limit.

\begin{thm} \label{Thm_AH edge} \textbf{\textup{(Almost-Hermitian edge scaling limit)}} Let $Q$ be the elliptic potential \eqref{Q elliptic} with
\begin{equation} \label{tau AH edge}
\tau \equiv \tau_N=1-\frac{c^2}{(2N)^{1/3}}, \qquad c>0.
\end{equation}
Then for $p = \pm \sqrt{2}(1+\tau)$, the $k$-point correlation function $R_{N,k}$ converges, as $N \to \infty$, locally uniformly to the limit $R_k$ of the form \eqref{R Pfaffian} with
 \begin{equation}  \label{fE AH edge}
    	f_z(u):=2c\int_{0}^{u} e^{ c^3(z-t)+\frac{c^6}{12} } \Ai\Big(2c(z-t)+\frac{c^4}{4}\Big)\,dt, \qquad E:=(-\infty,0).
    \end{equation}
\end{thm}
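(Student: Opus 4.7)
The plan is to follow the strategy of Theorem \ref{Thm_AH bulk}, namely to exploit the generalised Christoffel-Darboux formula of \cite{byun2021universal}. For the symplectic elliptic Ginibre ensemble, the finite-$N$ pre-kernel is known explicitly in terms of Kanzieper's skew-orthogonal Hermite polynomials \cite{MR1928853}; the generalised Christoffel-Darboux identity reduces the partial sum over this skew-orthogonal kernel to a closed expression involving only Hermite polynomials of top degrees $2N$ and $2N-1$ together with an auxiliary one-dimensional integral. This reduction is the same starting point as in the bulk analysis and applies equally well at the edge point $p = \pm\sqrt{2}(1+\tau)$.

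I would next insert the rescaling \eqref{rescaling} with $\gamma_N = \sqrt{2(1-\tau^2)/N}$, which for $\tau = 1 - c^2/(2N)^{1/3}$ is of order $N^{-2/3}$, consistent with the soft-edge Airy scale. To analyse the resulting expression I would apply uniform Plancherel--Rotach asymptotics for $H_{2N}(\zeta/\sqrt{\tau})$ and $H_{2N-1}(\zeta/\sqrt{\tau})$ at the soft edge of the semicircle, valid in the double-scaling window $\tau \to 1$. In this window, the asymptotics naturally produce factors of the form $e^{c^3(z-t)+c^6/12}\,\Ai(2c(z-t)+c^4/4)$, which is precisely the integrand defining $f_z$ in \eqref{fE AH edge}.

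The final step is to reorganise the limiting pre-kernel into the Wronskian form \eqref{kappa Wronskian} with $E = (-\infty,0)$. The restriction of the integration to the negative half-line emerges from the auxiliary Christoffel--Darboux integral after changing variables to measure the signed distance from the edge, while the Wronskian structure itself is inherited from the antisymmetric combination of consecutive-degree skew-orthogonal polynomials, which, after Plancherel--Rotach rescaling and use of the Hermite recurrences, converges to $W(f_w,f_z)$. Locally uniform convergence of the correlation functions then follows as in the bulk case from convergence of the rescaled pre-kernel together with standard Gaussian domination of the Pfaffian entries.

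The main obstacle is the Plancherel--Rotach asymptotics at the soft edge in this almost-Hermitian double-scaling window, uniformly in the complex rescaled variable $z$ on compact sets. In contrast to the bulk regime of Theorem \ref{Thm_AH bulk}, where the relevant saddle points of the integral representation of $H_{2N}$ are simple and well separated, here the saddle coalesces with a branch point of the phase function and the correct local normal form is the Airy integral rather than a Gaussian. A careful steepest descent analysis adapted to the Christoffel--Darboux integrand is therefore required to extract the stated Airy asymptotics with the precise exponential modulation $e^{c^3(z-t)+c^6/12}$ and to justify the interchange of limit and integration that yields the Wronskian formula. Once this asymptotic input is secured, the identification of the limit with the function $f_z$ in \eqref{fE AH edge} and the verification of the domain $E=(-\infty,0)$ are essentially bookkeeping.
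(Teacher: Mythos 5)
Your outline reaches for the same ingredients the paper uses — Kanzieper's skew-orthogonal Hermite kernel, the generalised Christoffel--Darboux identity of \cite{byun2021universal}, and critical Plancherel--Rotach (Airy) asymptotics at the soft edge of the semicircle — and you correctly diagnose the coalescing-saddle Airy normal form as a central technical point. So the approach is essentially the paper's. However, two things you describe as routine are in fact where most of the work sits, and one of your statements is not quite right.

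First, the Christoffel--Darboux identity does not reduce the kernel to ``a closed expression involving only Hermite polynomials of top degrees $2N$ and $2N-1$.'' It produces a first-order differential equation $\partial_z\widehat\kappa_N = 2(z-w)\widehat\kappa_N + \RN{1}_N - \RN{2}_N$, and the inhomogeneous term $\RN{2}_N$ still contains the full sum $T_N(p) = e^{-\frac{N}{2(1+\tau)}p^2}\sum_{l=0}^{N-1}\frac{(\tau/2)^l}{(2l)!!}H_{2l}(\sqrt{N/(2\tau)}\,p)$ over \emph{all} even-degree Hermite polynomials, not just the top pair. Extracting the limit of $T_N(p_N)$ at $p_N=\sqrt2(1+\tau)$ is not bookkeeping: the paper splits $T_N(p_N)=T_N(0)+\int_0^{r_N}T_N'+\int_{r_N}^{p_N}T_N'$ with a window $r_N=p_N-(2N)^{-2/3+\alpha}$, uses oscillatory Plancherel--Rotach bounds to show the interior contribution is exponentially small, applies the Airy normal form near the edge, and then invokes the Laplace transform identity $\int_{-\infty}^\infty e^{av}\Ai(v)\,dv=e^{a^3/3}$ to recombine the $T_N(0)$ constant with the edge integral into a single $\int_{-\infty}^0$. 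This is precisely where the domain $E=(-\infty,0)$ of $f_z$ comes from; your proposal attributes it vaguely to ``changing variables to measure the signed distance from the edge,'' which does not capture the required two-regime decomposition nor the role of the Laplace-transform recombination.

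Second, the paper does not directly ``reorganise the limiting pre-kernel into the Wronskian form'' via Plancherel--Rotach rescaling of the sum. Instead, it identifies the limits $\RN{1},\RN{2}$ of the inhomogeneous terms (Proposition~\ref{Prop_inhom terms AH edge}), and then \emph{verifies} that the Wronskian ansatz $\upsilon(z,w)=\sqrt\pi\int_{-\infty}^0 W(f_w,f_z)(u)\,du$ solves the limiting ODE $\partial_z\upsilon=e^{-(z-w)^2}(\RN{1}-\RN{2})$ and is the unique antisymmetric solution, using the differential identities $\partial_z f_z'(u)=-f_z''(u)$ and $\partial_z f_z(u)=-f_z'(u)+f_z'(0)$ together with integration by parts. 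The asymptotic identification of the inhomogeneous terms ($\partial_z\RN{1}_N$, $\partial_w[e^{-(z-w)^2}\RN{2}_N]$) and the matching of initial values (at $z=\bar w$ for $\RN{1}$, at $w=0$ for $\RN{2}$) are a separate step, requiring the complex-elliptic-Ginibre edge kernel from the literature as input. Your proposal would likely reconstruct a correct proof if fleshed out, but the missing analysis of $T_N$ and the ODE-verification structure are genuine pieces you would need to supply, not incidental details.
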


Notice that with \eqref{fE AH edge}, the kernel $\KK$ in \eqref{K complex} is given by
\begin{equation} \label{K AH edge}
\mathcal{K}(z,w) = 8 \sqrt{\pi} \,c^2\, e^{z^2+\bar{w}^2+\frac{c^6}{6} }
 \int_{-\infty}^0 e^{ c^3(z+\bar{w}-2u) } \Ai\Big(2c(z-u)+\frac{c^4}{4}\Big)\Ai\Big(2c(\bar{w}-u)+\frac{c^4}{4}\Big) \,du.
\end{equation}
Again, the kernel \eqref{K AH edge} corresponds to the complex counterpart obtained first in \cite{bender2010edge,akemann2010interpolation}.
(We also refer the reader to \cite{AB21,MR3192169} for alternative derivations of \eqref{K AH edge}.)

We now briefly discuss the non-Hermitian  limit when $c \to \infty$.
It follows from the asymptotic behaviour of the Airy function (see e.g.~\cite[Eq.(9.7.5)]{olver2010nist})
\begin{equation} \label{Airy asym}
\Ai(z)=\frac{\exp(-\frac23 z^{3/2})}{2\sqrt{\pi} z^{1/4}} \cdot (1+O(z^{-3/2})) , \qquad (z\to \infty)
\end{equation}
that the function $f_z$ in \eqref{fE AH edge} satisfies
\begin{equation}
	\lim_{c\to\infty}	f_{z}(u)=\tfrac12 \Big( \erfc(\sqrt{2}(z-u))-\erfc(\sqrt{2}z)  \Big).
\end{equation}
Thus one can recover \eqref{fE NH bulk edge} (cf. \eqref{f NH bulk edge alternative}) for the edge case as well.

\begin{figure}[h!]
	\begin{subfigure}{0.32\textwidth}
		\begin{center}
			\includegraphics[width=\textwidth]{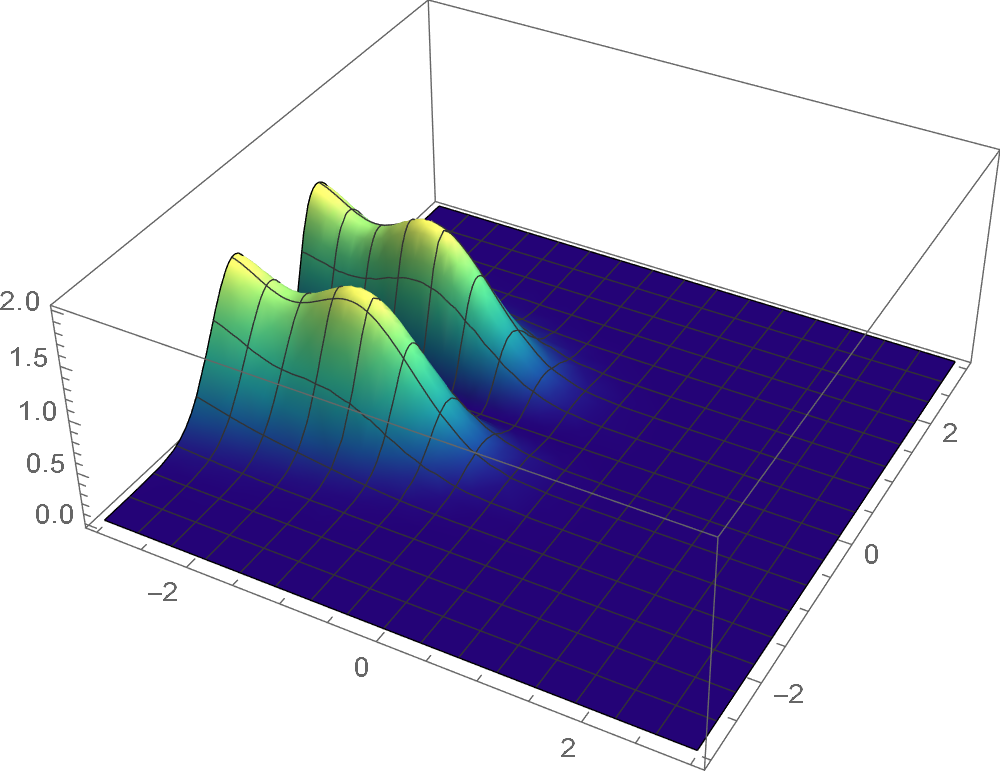}
		\end{center}
		\subcaption{$c=1$}
	\end{subfigure}
	\begin{subfigure}{0.32\textwidth}
		\begin{center}
			\includegraphics[width=\textwidth]{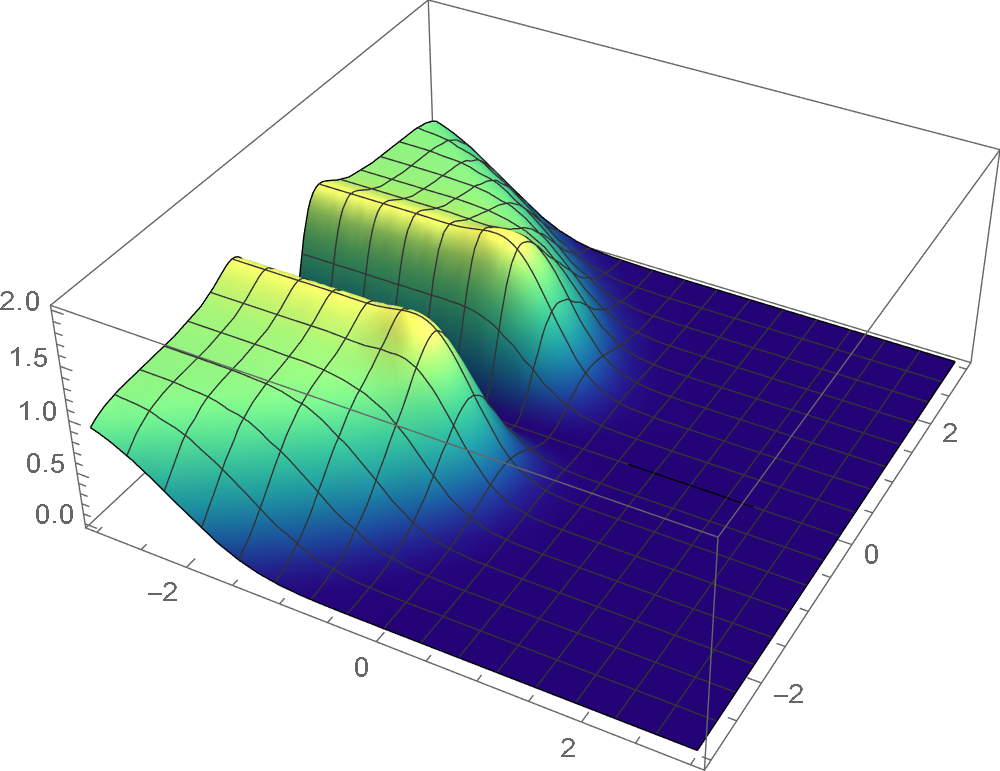}
		\end{center} \subcaption{$c=2$}
	\end{subfigure}
		\begin{subfigure}{0.32\textwidth}
		\begin{center}
			\includegraphics[width=\textwidth]{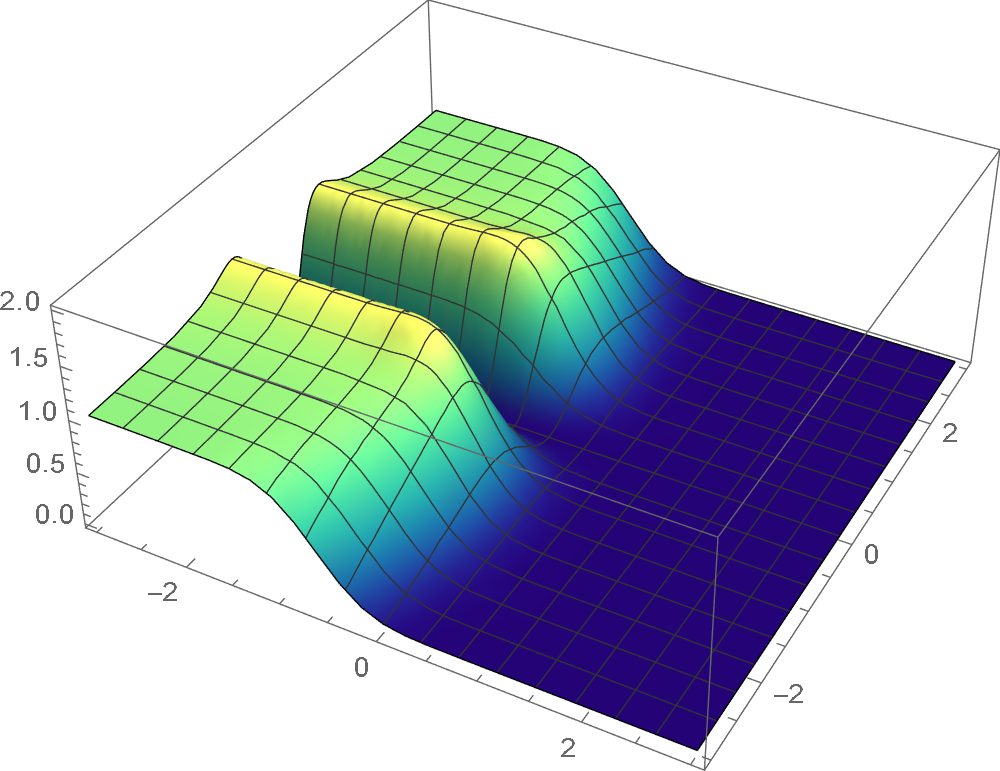}
		\end{center} \subcaption{$c=3$}
	\end{subfigure}
	\caption{ The graphs of the one-point function $R$ in \eqref{R one pt f} with \eqref{fE AH edge}. } \label{Fig_R AH edge}
\end{figure}

\subsection{Symplectic Ginibre ensemble with boundary confinements}

In the previous subsection we have discussed the ensembles \eqref{Gibbs} whose potential does not have any constraints near the boundary. In what follows, we call such a situation as \emph{free boundary} (or \emph{soft edge}) condition.
On the other hand different universality classes naturally arise at the edge, if appropriate boundary constraints are imposed.
Typical examples of such boundary conditions are the so-called \emph{soft/hard edge} and \emph{hard edge} constraints.

In the soft/hard edge setting, we completely confine the particles inside of the droplet $S$ by redefining the potential $Q(\zeta)=+\infty$ outside $S$.
This type of boundary confinement does not change the limiting spectral distribution.
The term soft/hard comes from this situation being called when ``the soft edge meets the hard edge'' \cite{claeys2008universality}.
For the complex ensembles \eqref{Gibbs complex}, such a boundary condition has been investigated in \cite{MR4030288,MR4169375,MR2921180,MR4179777} for example.

In the hard edge setting, we confine the particles further inside of the droplet $S$.
This leads to a modified associated equilibrium measure, in particular giving rise to some non-trivial measure on a certain one-dimensional subset.
From a statistical physics point of view, this confinement has the effect of condensing a non-trivial portion of the particles onto the hard edge.
We refer to  \cite{seo2020edge,hedenmalm2020riemann,MR4071093} and references therein for the studies of complex ensembles \eqref{Gibbs complex} with such type of boundary confinement.
(See also \cite{MR1748745} for a similar situation in the context of truncated unitary ensembles.)

To our knowledge, the edge scaling limits of the symplectic ensembles associated with the above boundary conditions have not been investigated, and we aim to contribute to these problems.
In particular, in Theorems~\ref{Thm_NH hE} and \ref{Thm_NH hW}, we derive the scaling limits of the symplectic Ginibre ensembles with soft/hard edge and hard edge constraints, which provide new universality classes.
See Figure~\ref{Fig_R edge FH} for the graphs of the corresponding one-point functions.
Furthermore, we shall show that the limiting pre-kernels are again of Wronskian form and that the relation to their complex counterparts again holds.

First, we consider the soft/hard edge Ginibre ensemble, which corresponds to the configuration \eqref{Gibbs} with the potential
     \begin{equation} \label{Q Ginibre HE}
	Q(\zeta)=
	\begin{cases}
		|\zeta|^2 &\text{if } |\zeta|\le\sqrt{2},
		\\
		\infty &\text{otherwise}.
	\end{cases}
\end{equation}
By construction, all the eigenvalues are completely confined inside the droplet $S=\mathbb{D}(0,\sqrt{2})$.
As a result, the rescaled point process \eqref{rescaling} at the real edge of the spectrum lies only in the left-half plane $\mathbb{H}_-$.
Since the limiting spectral distribution (the circular law) is the same as the usual symplectic Ginibre ensemble, we rescale the process with the choice of microscopic scale \eqref{microscale Laplacian}.
We then obtain the following.

\begin{thm} \label{Thm_NH hE}\textbf{\textup{(Non-Hermitian soft/hard edge scaling limit)}} Let $Q$ be the soft/hard edge Ginibre potential \eqref{Q Ginibre HE}.
Then for $p = \pm \sqrt{2}$, the $k$-point correlation function $R_{N,k}$ converges, as $N \to \infty$, locally uniformly to the limit $R_k$ of the form \eqref{R Pfaffian} with
  \begin{equation} \label{fE NH Hedge}
    	f_z(u):=\frac{2}{\sqrt{\pi}} \int_{-\infty}^{u} \frac{e^{-2(z-t)^2}}{\sqrt{\erfc(2t)}} \,dt, \qquad E:=(-\infty,0).
    \end{equation}
\end{thm}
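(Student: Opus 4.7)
The plan is to identify the finite-$N$ pre-kernel via skew-orthogonal polynomials adapted to the truncated weight and then perform a Laplace-type asymptotic analysis at the hard-edge point $p=\sqrt{2}$, paralleling the route that yields \eqref{fE NH bulk edge} for the unconfined symplectic Ginibre ensemble. The starting structural observation is that both the potential $|\zeta|^2$ and the confining disk $\mathbb{D}(0,\sqrt{2})$ in \eqref{Q Ginibre HE} are rotationally symmetric, so, exactly as in Kanzieper's analysis \cite{MR1928853} of the unconfined case, the odd monomials $q_{2k+1}(\zeta)=\zeta^{2k+1}$ together with suitable monic even polynomials $q_{2k}(\zeta)$ form a skew-orthogonal family with respect to the skew-inner product induced by \eqref{Gibbs}. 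The essential change is that the skew-norm
\begin{equation}
r_k \;=\; \langle q_{2k},q_{2k+1}\rangle_s \;\propto\; N^{-(2k+2)}\,\gamma(2k+2,2N)
\end{equation}
now involves the lower incomplete gamma function rather than $\Gamma(2k+2)$.

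First I would start from the Christoffel--Darboux-type expansion $\kappa_N(\zeta,\eta)=\sum_{k=0}^{N-1}r_k^{-1}\bigl(q_{2k+1}(\zeta)q_{2k}(\eta)-q_{2k+1}(\eta)q_{2k}(\zeta)\bigr)$ and rewrite the odd--even structure through an integral representation of $q_{2k}/\sqrt{r_k}$, so that $\kappa_N$ becomes a discrete sum of integrals over an auxiliary variable that will play the role of $u$ in the Wronskian.

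Next I would apply the edge rescaling $\zeta=\sqrt{2}+\gamma_N z$, $\eta=\sqrt{2}+\gamma_N w$ with $\gamma_N=\sqrt{2/N}$, and change the summation variable to the local coordinate $t=(k-N)/\sqrt{2N}$. The Riemann sum converts into an integral over $t\in(-\infty,0)$, the upper endpoint $t=0$ being enforced by the cutoff $k\le N-1$ on the scale of the saddle. A Laplace-method expansion of each summand produces the Gaussian $e^{-2(z-t)^2}$ that appears in the numerator of $f_z'(u)$ in \eqref{fE NH Hedge}. The crucial new ingredient compared to the soft-edge case is the uniform asymptotics
\begin{equation}
\frac{\gamma(2k+2,2N)}{\Gamma(2k+2)}\;\longrightarrow\;\tfrac12\erfc(2t),\qquad k=N+t\sqrt{2N},
\end{equation}
whose square root, entering through the normalisation $1/\sqrt{r_k}$ attached to each $q_{2k}$, produces the factor $\sqrt{\erfc(2t)}$ in the denominator of \eqref{fE NH Hedge}. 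Recognising the resulting double integral as $\sqrt{\pi}\,e^{z^2+w^2}\int_{-\infty}^{0} W(f_w,f_z)(u)\,du$ then yields the stated limit.

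The main obstacle will be establishing the incomplete gamma asymptotics uniformly in the double-scaling window $|k-N|\lesssim\sqrt{N}$ and justifying the exchange of sum and limit uniformly in $z,w$ on compact subsets of $\mathbb{H}_-$; this requires non-trivial tail bounds on $\gamma(2k+2,2N)/\Gamma(2k+2)$ well away from the saddle together with uniform Laplace-error estimates. A secondary subtlety is the boundary behaviour at $t=0$: the integrand stays finite there since $\erfc(0)=1$, but the regimes $k<N$ and $k\ge N$ meet precisely at the wall, so one must verify that truncating the sum at $k=N-1$ generates exactly the boundary structure encoded by the upper endpoint $u=0$ of $E$.
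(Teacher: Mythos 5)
Your plan follows essentially the same route as the paper's proof (Subsection~4.2): both work with the radially-symmetric skew-orthogonal family $q_{2k+1}(\zeta)=\zeta^{2k+1}$, $q_{2k}(\zeta)=\zeta^{2k}+\sum_{l<k}\bigl(\prod_j h_{2l+2j+2}/h_{2l+2j+1}\bigr)\zeta^{2l}$, $r_k=2h_{2k+1}$, extract the incomplete-gamma structure of the norms $h_m$ on the truncated disk, run Laplace's method in the window $|m-2N|\lesssim\sqrt{N}\log N$ to produce the Gaussian and $\erfc^{-1/2}$ factors, discard the low-degree tail, and finish with a Riemann-sum passage to the nested integral. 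The obstacles you flag (uniformity of the incomplete-gamma asymptotics, tail bounds away from the saddle, and the boundary matching at $k=N-1$ producing the endpoint $u=0$ of $E$) are precisely what the paper addresses in Lemmas~\ref{lem:henorm}, \ref{lem:lowerwm}--\ref{Lem_G:asym}.

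Two places to tighten. First, your change of variable is off by a factor of $\sqrt2$: with $k=N+t\sqrt{2N}$ the Poisson/normal approximation gives $\gamma(2k+2,2N)/\Gamma(2k+2)\to\tfrac12\erfc(\sqrt2\,t)$, not $\tfrac12\erfc(2t)$, and the Gaussian from Laplace's method comes out as $e^{-2(z-t/\sqrt2)^2}$; the matching choice is $k=N+2t\sqrt N$, equivalently $t=\xi_{2k}/2$ with $\xi_m=\sqrt N(\tfrac{m}{2N}-1)$ as in Lemma~\ref{lem:upperwm}. Second, the sentence ``rewrite the odd--even structure through an integral representation of $q_{2k}/\sqrt{r_k}$'' elides a genuine intermediate step: because $q_{2k}$ carries the products $\prod_{j=0}^{k-l-1}h_{2l+2j+2}/h_{2l+2j+1}$, you must show these telescope (up to $1+o(1)$) to $\sqrt{h_{2k}/h_{2l}}$ uniformly in the scaling window before the inner sum over $l$ can be read as a Riemann integral with the desired separable $\erfc^{-1/2}$ weight; this is the content of the paper's Lemma~\ref{Lem_hk prod}, and without it the double sum does not factor into the nested $\int_{-\infty}^{0}\!\int_{-\infty}^{s}$ structure that produces the Wronskian form with $E=(-\infty,0)$. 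Neither point wrecks the plan, but both need to be done explicitly.
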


Note that with \eqref{fE NH Hedge}, the kernel $\KK$ in \eqref{K complex} is given by
\begin{equation} \label{K NH hE}
 \KK(z,w)=\frac{4}{ \sqrt{\pi} }\, e^{2 z \bar{w}} \int_{-\infty}^0 \frac{ e^{-(z+\bar{w}-2u)^2} }{ \erfc(2u) } \,du.
\end{equation}
Again, \eqref{K NH hE} corresponds to the kernel of the complex Ginibre point process with soft/hard edge condition, see e.g.~\cite{MR3975882,MR4030288,forrester2010log}.

We now turn to the hard edge Ginibre ensemble.
This corresponds to the ensemble \eqref{Gibbs} with the potential
\begin{equation} \label{Q Ginibre HW}
	Q(\zeta)=
	\begin{cases}
		|\zeta|^2 &\text{if } |\zeta|\le\sqrt{2}\rho,
		\\
		\infty &\text{otherwise},
	\end{cases} \qquad \rho \in (0,1).
\end{equation}
By definition, the eigenvalues are confined in a disk $D_\rho:=\mathbb{D}(0,\sqrt{2}\rho)$. Thus the rescaled processes are again confined in $\mathbb{H}_-$.
In this case, the associated equilibrium measure is no longer absolutely continuous with respect to the area measure $dA$ and rather it is of the form
\begin{equation}
\frac{1}{2}\Delta Q \mathbf{1}_{D_\rho}\,dA + \frac{1-\rho^2}{\sqrt{2}\rho}\,ds,
\end{equation}
where $ds$ is the normalized arc-length measure on $\partial D_\rho$.
Therefore we choose the micro-scale at the edge-point $p=\sqrt{2}\rho$ as
\begin{equation}\label{microscale HW}
    \gamma_N = \frac{\sqrt{2}\rho}{N(1-\rho^2)}.
\end{equation}
We then obtain the following.

\begin{thm} \label{Thm_NH hW}\textbf{\textup{(Non-Hermitian hard edge scaling limit)}} Let $Q$ be the hard edge Ginibre potential \eqref{Q Ginibre HW}.
Then for $p = \pm \sqrt{2}\rho$, the $k$-point correlation function $R_{N,k}$ converges, as $N \to \infty$, locally uniformly to the limit
\begin{equation} \label{R Pfaffian HW}
R_k(z_1,\dots,z_k)=\prod_{j=1}^k (\bar{z}_j-z_j) \Pf \Big[
\begin{pmatrix}
\kappa(z_j,z_l) & \kappa(z_j,\bar{z}_l)
\smallskip
\\
\kappa(\bar{z}_j,z_l) & \kappa(\bar{z}_j,\bar{z}_l)
\end{pmatrix}
\Big]_{j,l=1}^k,
\end{equation}
where the pre-kernel $\kappa$ is of Wronskian form
 \begin{equation} \label{fE NH HWedge}
\kappa(z,w):=\int_{E} W(f_{w},f_{z})(u) \, du, \qquad        f_z(u) := \int_{0}^{u} t^{\frac{1}{2}}e^{2zt} \, dt,\qquad E:=(0,1).
    \end{equation}
\end{thm}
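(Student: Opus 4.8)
The plan is to follow the Pfaffian-point-process machinery exactly as in the soft/hard edge case, starting from the finite-$N$ skew-orthogonal polynomial representation of the correlation kernel for the symplectic Ginibre ensemble, and then to perform the asymptotic analysis by the Laplace method. Since the potential \eqref{Q Ginibre HW} is radially symmetric and equals $|\zeta|^2$ inside $D_\rho$, the skew-orthogonal polynomials are still (proportional to) the monomials, and the finite-$N$ pre-kernel $\kappa_N$ has the standard closed form as a sum over $j$ of a ratio involving the truncated-moment normalisation $\int_{D_\rho} |\zeta|^{2j} e^{-N|\zeta|^2}\,dA(\zeta)$, i.e.\ lower incomplete gamma functions $\gamma(j+1, 2N\rho^2)$. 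The confinement to $D_\rho$ only changes these normalising constants (and restricts the spatial variables to $D_\rho$), so the entire problem reduces to the asymptotics of a sum of the schematic form $\sum_{j} \frac{(\text{monomials in }\zeta,\bar\eta)}{\gamma(j+1,2N\rho^2)}$ near the edge point $p=\sqrt2\rho$.

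The key steps, in order, are: \textbf{(i)} record the exact finite-$N$ Pfaffian/pre-kernel formula and the rescaling \eqref{rescaling} with the micro-scale \eqref{microscale HW}, noting that $\gamma_N \asymp 1/N$ here rather than $1/\sqrt N$ — this faster scale is dictated by the arc-length component $\frac{1-\rho^2}{\sqrt2\rho}\,ds$ of the equilibrium measure and is exactly what produces a nondegenerate limit; \textbf{(ii)} insert the substitution $\zeta = p + e^{i\theta}\gamma_N z$, expand the monomials $\zeta^{2j}$ etc.\ to the needed order, and simultaneously expand the incomplete gamma function: for $j$ near $N\rho^2$ one has $\gamma(j+1,2N\rho^2)/j! \to \tfrac12\erfc(\cdot)$ in the bulk regime, but in the hard-edge regime, because the relevant index range around $j = N\rho^2 + O(N^{1/2}\cdot?)$ interacts with the cut-off $2N\rho^2$ at the \emph{boundary} of the summation range, the correct local variable is $t = j/N - \rho^2$ scaled so that the sum becomes a Riemann sum over $t\in(0,1)$ — giving the weight $t^{1/2}$ and the exponential $e^{2zt}$ in $f_z(u)=\int_0^u t^{1/2}e^{2zt}\,dt$ and the interval $E=(0,1)$; \textbf{(iii)} identify the resulting Riemann-sum limit of $\gamma_N^{-1}\kappa_N$ with $\int_E W(f_w,f_z)(u)\,du$, using the Wronskian bookkeeping from \cite{akemann2021scaling,byun2021universal} (the antisymmetrised double integral over $j$ collapses to a single integral of a Wronskian); \textbf{(iv)} upgrade pointwise convergence to local uniform convergence with uniform domination, and pass to the $k$-point functions via continuity of the Pfaffian, obtaining \eqref{R Pfaffian HW}. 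Note there is no prefactor $\sqrt\pi e^{z^2+w^2}$ in \eqref{fE NH HWedge} (unlike \eqref{kappa Wronskian}); this is absorbed because the hard-edge scaling kills the Gaussian weight and one is left with the bare Wronskian — a point to be careful about when matching conventions.

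The main obstacle I expect is step \textbf{(ii)}: controlling the incomplete gamma function uniformly across the summation index when the index sits at the \emph{edge of its own range} near the hard wall. Unlike the soft/hard edge case \eqref{fE NH Hedge}, where $\erfc(2t)$ in the denominator comes from a single incomplete-gamma-to-erfc limit at a point strictly inside the moment sum, here the hard-wall cut-off at radius $\sqrt2\rho$ coincides with where the dominant indices live, so one must handle the boundary layer of the sum carefully — this is precisely where the $1/N$ (rather than $1/\sqrt N$) scale and the measure-theoretic change (the arc-length term) enter, and getting the constants and the interval $(0,1)$ right requires a delicate Laplace/Euler–Maclaurin analysis rather than a naive saddle point. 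A secondary technical point is verifying that the spatial variables, constrained to $\overline{D_\rho}$, land in $\mathbb{H}_-$ after rescaling and that the boundary restriction does not obstruct the local uniform estimates; this is routine once the main asymptotics are in hand. I would also double-check the claimed form of $\KK$ against \cite{MR4071093,seo2020edge} as a consistency test before finalising.
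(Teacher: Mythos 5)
Your proposal follows essentially the same route as the paper: the monomial skew-orthogonal polynomials for the radially symmetric confined potential, Laplace asymptotics of the truncated moments $h_m=N^{-(m+1)}\gamma(m+1,2N\rho^2)$, discarding the low-degree and transition-window indices, and a Riemann sum over the macroscopic index range $\tau_m\in(2\rho^2,2)$ (rescaled by $1-\rho^2$ to $(0,1)$), followed by the Wronskian identification and cocycle/Pfaffian bookkeeping. The one place your sketch is thinnest — the origin of the weight $t^{1/2}$ — is exactly the endpoint-Laplace estimate $h_m\sim (2\rho^2)^{m+1}e^{-2\rho^2N}/(N(\tau_m-2\rho^2))$ for $\tau_m>2\rho^2$ (critical point outside the wall), whose inverse square root supplies one factor of $(\tau_m-2\rho^2)^{1/2}$ per weighted polynomial; with that filled in, your plan matches the paper's proof.
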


We mention that the function $f_z$ can be written in terms of the incomplete gamma function $\gamma(a,z)$ as
\begin{equation}
f_z(u)= \frac{1}{(-2z)^{\frac32}} \gamma(\tfrac32,-2uz).
\end{equation}

Compared to the limiting correlation functions of the form \eqref{R Pfaffian} with \eqref{kappa Wronskian}, in the hard edge scaling limit \eqref{R Pfaffian HW}, there are no Gaussian factors.
This comes from the micro-scale \eqref{microscale HW} used only for the hard edge ensemble among the models under consideration in the present work.
By a similar reason, with the choice of \eqref{fE NH HWedge}, it is natural to consider
\begin{equation} \label{K NH hW}
\KK(z,w):= 4\int_E f_z'(u) f_{\bar{w}}'(u)\,du= 4\int_E u \, e^{2u(z+\bar{w})}\,du
\end{equation}
rather than the one of the form \eqref{K complex}. Then this kernel \eqref{K NH hW} corresponds to the one obtained from the complex Ginibre ensemble with hard edge constraint, see \cite{seo2020edge}.

Concerning the equivalence of the truncated unitary and hard edge Ginibre ensemble \cite{seo2020edge,MR1748745}, we expect that the scaling limit \eqref{fE NH HWedge} coincides with the one obtained in the context of the truncated symplectic ensemble, see \cite{Lysychkin,BL} for the scaling limit away from the real edge.

\begin{figure}[h!]
	\begin{subfigure}{0.32\textwidth}
		\begin{center}
			\includegraphics[width=\textwidth]{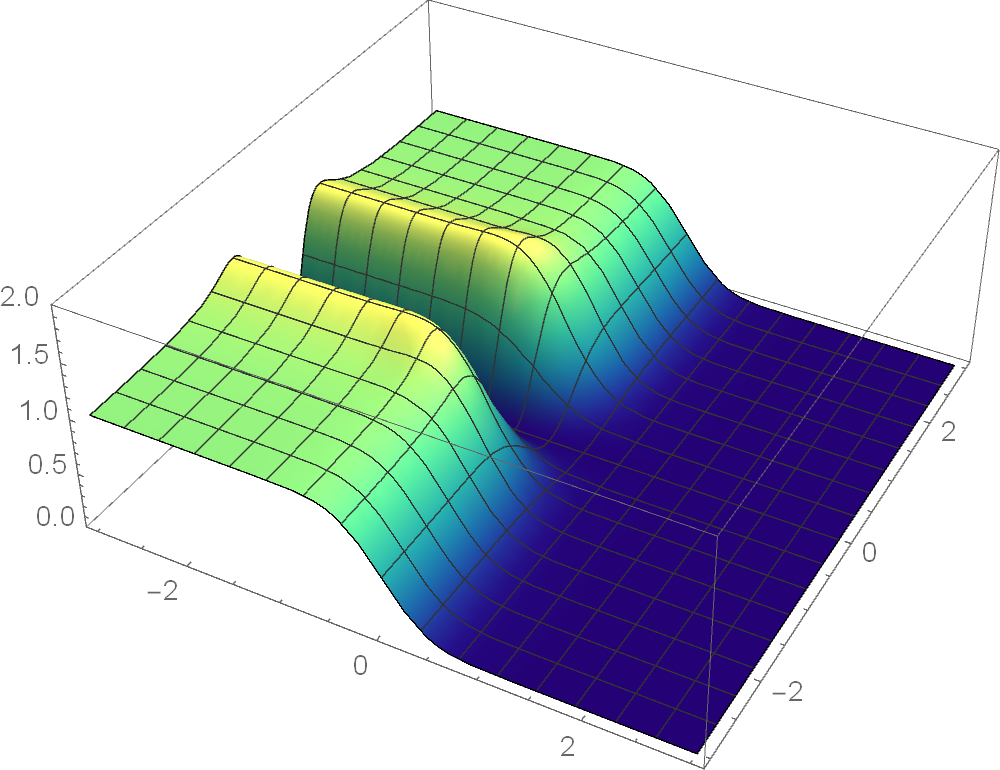}
		\end{center}
		\subcaption{free boundary}
	\end{subfigure}
	\begin{subfigure}{0.32\textwidth}
		\begin{center}
			\includegraphics[width=\textwidth]{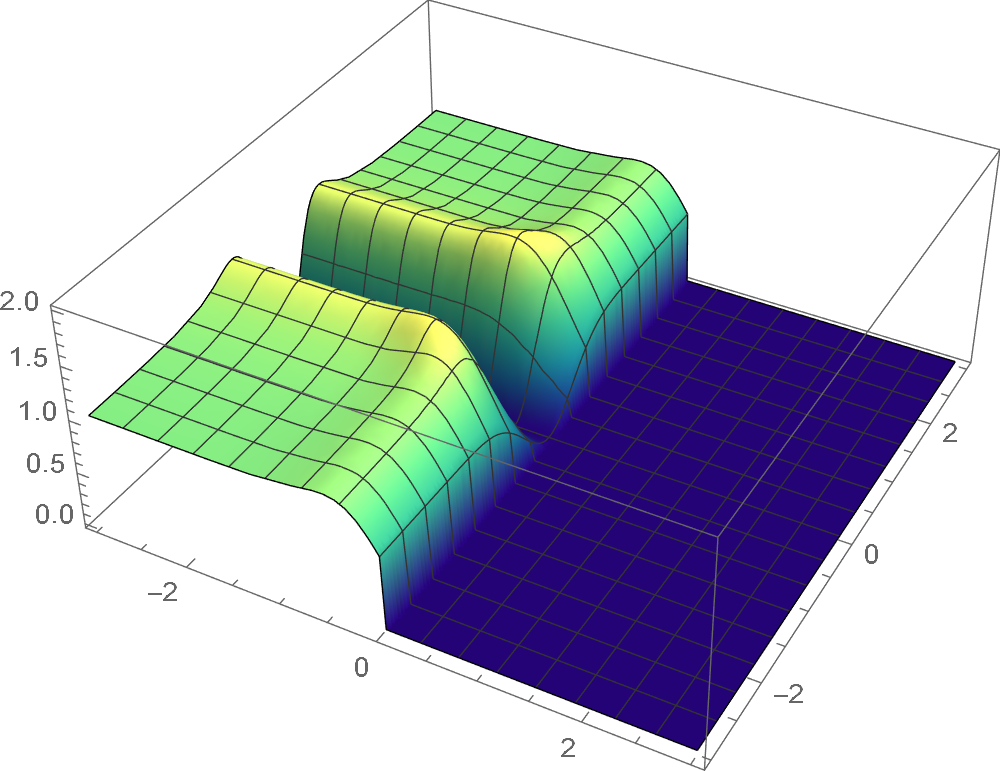}
		\end{center} \subcaption{soft/hard edge}
	\end{subfigure}
		\begin{subfigure}{0.32\textwidth}
		\begin{center}
			\includegraphics[width=\textwidth]{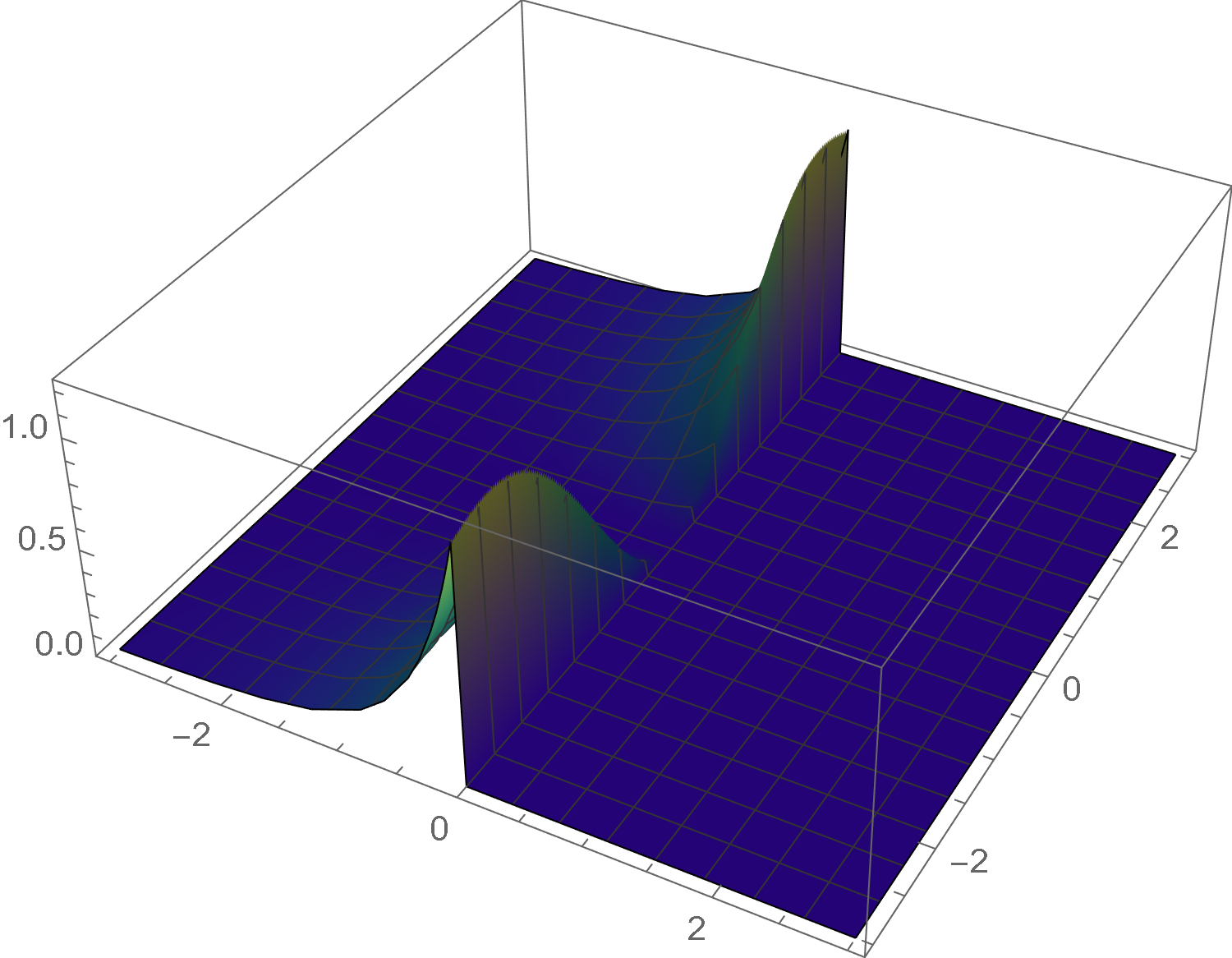}
		\end{center} \subcaption{hard edge}
	\end{subfigure}
	\caption{ The graphs of the one-point function $R$ in \eqref{R one pt f} with \eqref{fE NH bulk edge}, \eqref{fE NH Hedge}, and \eqref{fE NH HWedge} respectively. } \label{Fig_R edge FH}
\end{figure}

\begin{rmk*}[Numerics on subleading terms]
Beyond the limiting correlation functions, a natural question arising in the study of scaling limits is their rates of convergence as $N \to \infty$.
For instance, these were obtained in \cite{byun2021universal} for the symplectic elliptic Ginibre ensemble with fixed $\tau \in [0,1)$. (See also \cite{MR2208159} for the Hermitian counterparts.) In particular, it was shown that in the edge scaling limit, the convergence rate is of order $O(N^{-1/2})$. %(cf. in the bulk case, the speed is exponential).

The precise asymptotic expansions in the situations of Theorems~\ref{Thm_AH edge},~\ref{Thm_NH hE}, and ~\ref{Thm_NH hW} exceed the scope of this paper.
Nevertheless, we present some relevant numerical simulations.
In particular, from the numerics below, we observe that the rates of convergences are of order $O(N^{-1/3})$ in Theorem~\ref{Thm_AH edge} (cf. \cite{MR2208159}), of order $O(N^{-1/2})$ in Theorem~\ref{Thm_NH hE} and of order $O(N^{-3/4})$ in Theorem~\ref{Thm_NH hW}.

\begin{figure}[h!]
	\begin{subfigure}{0.32\textwidth}
		\begin{center}
			\includegraphics[width=\textwidth]{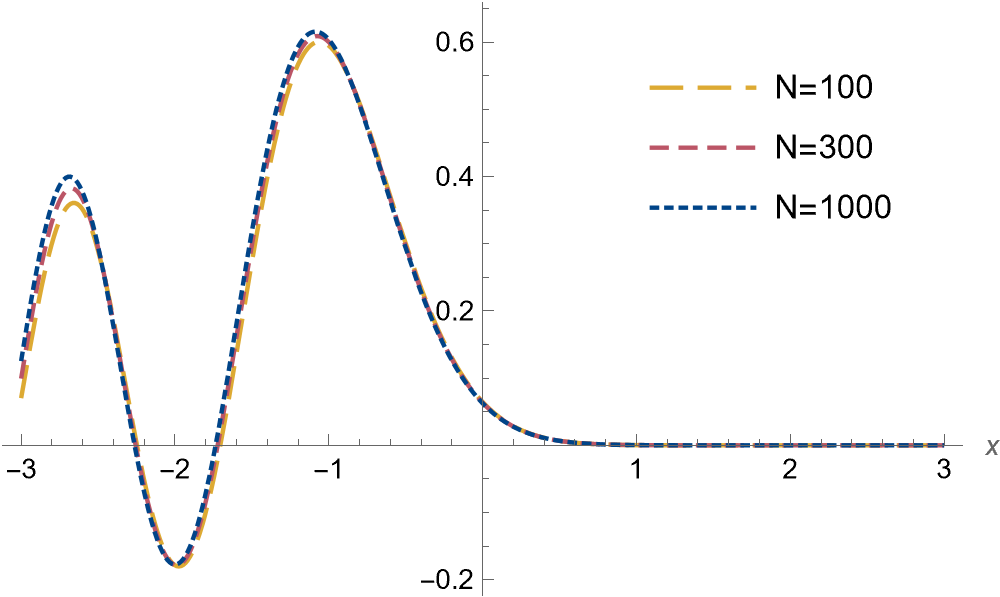}
		\end{center}
		\subcaption{almost-Hermitian edge, $y = 0.7$}
	\end{subfigure}
	\begin{subfigure}{0.32\textwidth}
		\begin{center}
			\includegraphics[width=\textwidth]{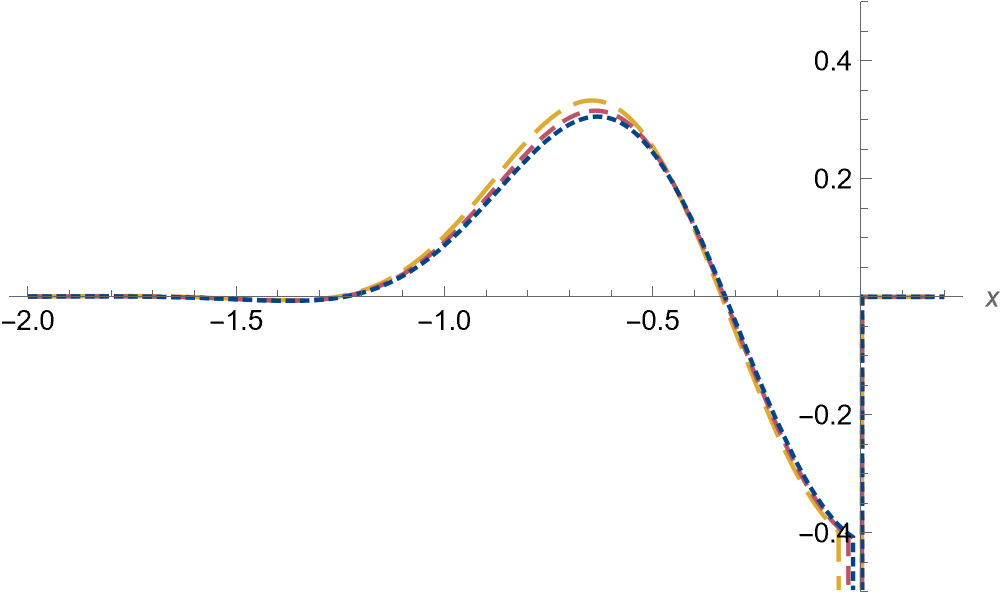}
		\end{center} \subcaption{soft/hard edge, $y = 1$}
	\end{subfigure}
		\begin{subfigure}{0.32\textwidth}
		\begin{center}
			\includegraphics[width=\textwidth]{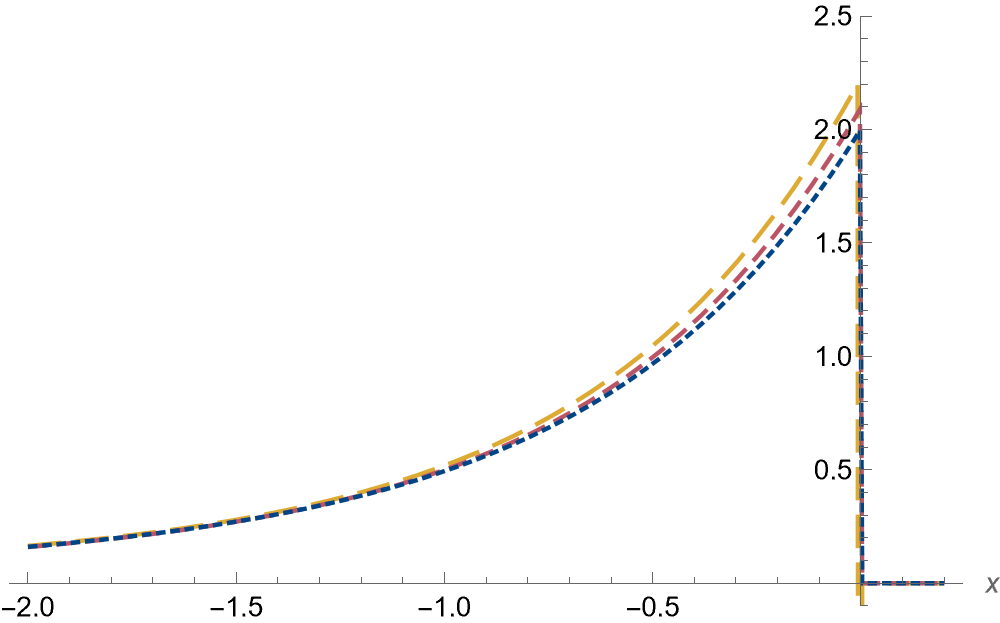}
		\end{center} \subcaption{hard edge, $y = 1$}
	\end{subfigure}

		\begin{subfigure}{0.32\textwidth}
		\begin{center}
			\includegraphics[width=\textwidth]{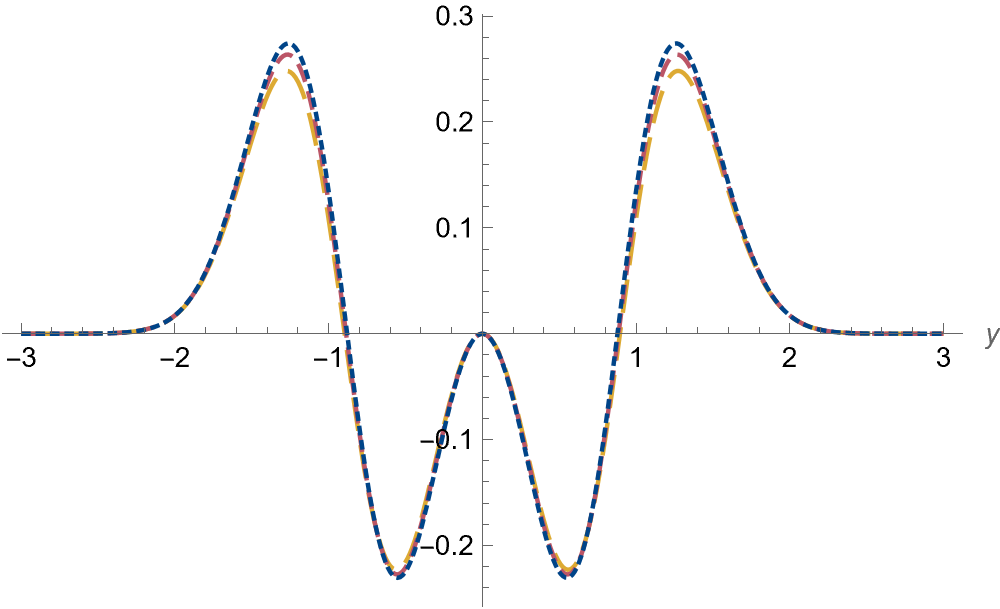}
		\end{center}
		\subcaption{almost-Hermitian edge, $x = 2$}
	\end{subfigure}
	\begin{subfigure}{0.32\textwidth}
		\begin{center}
			\includegraphics[width=\textwidth]{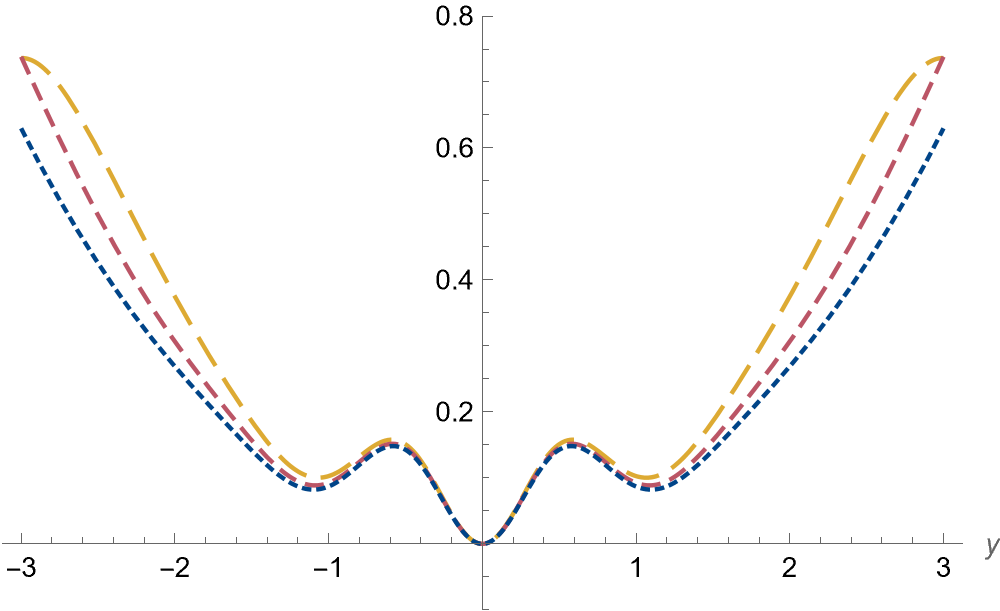}
		\end{center} \subcaption{soft/hard edge, $x = 1$}
	\end{subfigure}
		\begin{subfigure}{0.32\textwidth}
		\begin{center}
			\includegraphics[width=\textwidth]{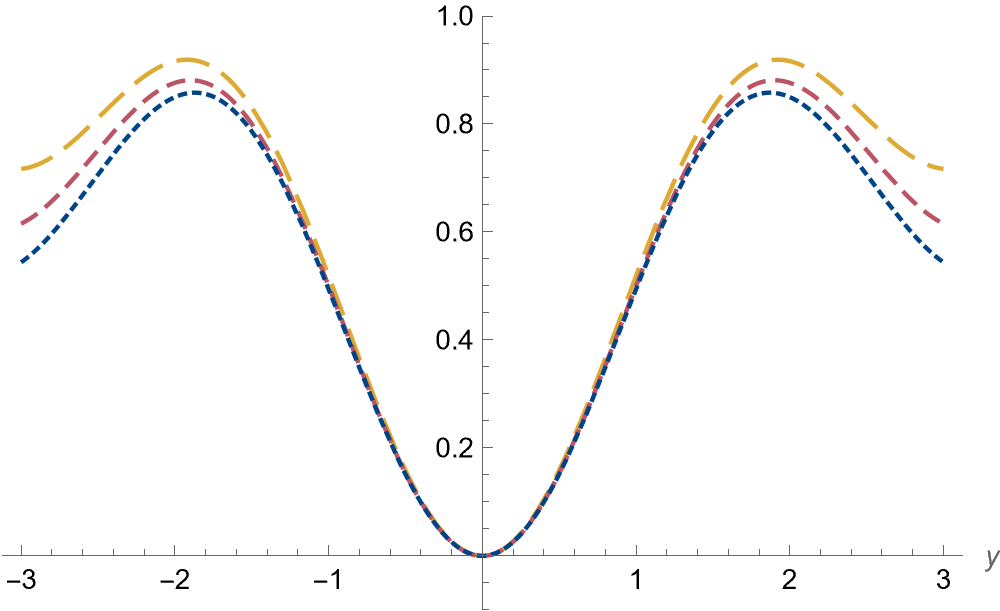}
		\end{center} \subcaption{hard edge, $x = 1$}
	\end{subfigure}
	\caption{ (A)--(C) display the graphs of $x \mapsto R_N^{(r)}(x+iy)$ for a given value of $y \in \R$, where $R_N^{(r)}(z):=N^r (R_N(z) - R(z))$.
	Here, $r=1/3$ for the almost-Hermitian edge (with $c=1$), $r=1/2$ for the soft/hard edge and $r=3/4$ for the hard edge cases respectively.
	(D)--(F) are the same figures for $y \mapsto R_N^{(r)}(x+iy)$. }
\end{figure}

\end{rmk*}

We end this section by giving a remark on universality.

\begin{rmk*}[Towards local universality]
Let $\upsilon(z,w):=e^{-z^2-w^2}\kappa(z,w). $
Then integration by parts gives rise to
\begin{align} \label{CDI limit}
\begin{split}
\pa_z\upsilon(z,w)=-2 \int_E \frac{\pa}{\pa z} f_z(u) \cdot f_w'(u)\,du+f_w(u) \frac{\pa}{\pa z} f_z(u)\Big|_{\pa E}.
\end{split}
\end{align}
For the free boundary cases, the first term on the right-hand side corresponds to the kernel of the complex ensembles, cf.~\eqref{K complex}.

The finite-$N$ version of this equation for the (elliptic) Ginibre potential was introduced in \cite{akemann2021scaling,byun2021universal} as a version of the Christoffel-Darboux formula.
Such a relation for some singular potentials has been investigated as well, which requires higher (or fractional) order differential operators, see \cite{MR2180006,osborn2004universal,MR4229527} for the Laguerre ensembles and  \cite{akemann2021scaling,ameur2018random,chau1998structure} for the Mittag-Leffler ensembles.

We also refer to \cite{MR1762659,MR1675356} for similar equations in the Hermitian random matrix theory.
Together with the local universality of the complex ensembles, this relation plays an important role in the study of local universality for symplectic ensembles \cite{MR2306224,MR2335245}.
In a similar spirit, we expect that the finite-$N$ version of the identity \eqref{CDI limit} together with the bulk/edge universality of the determinantal Coulomb gas \cite{MR2817648,hedenmalm2017planar} (cf.\cite{ameur2021szego}) provides key ingredients in universality problems for planar symplectic ensembles.

\end{rmk*}

\subsection*{Organisation of the paper.}
In Section~\ref{Section_Preliminaries}, we compile and summarise the relevant materials on the planar symplectic ensembles such as the skew-orthogonal polynomial representation of the pre-kernel.

Section~\ref{Section_AH} is devoted to the proofs of Theorems~\ref{Thm_AH bulk} and ~\ref{Thm_AH edge}.
In Subsection~\ref{Subsec_CDI}, we recall the Christoffel-Darboux formula in \cite{byun2021universal} and provide the general strategy of the proofs.

In Section~\ref{Section_NH heW}, we prove Theorems~\ref{Thm_NH hE} and \ref{Thm_NH hW}. The strategy of the proof of these theorems using the Laplace method is outlined in Subsection~\ref{Subsec_Laplace}.

\section{Preliminaries}\label{Section_Preliminaries}

By definition, the $k$-point correlation function $\bfR_{N,k}$ of the system \eqref{Gibbs} is given by
\begin{equation}\label{bfRNk def}
\bfR_{N,k}(\zeta_1,\cdots, \zeta_k) := \frac{N!}{(N-k)!} \int_{\C^{N-k}} \P _N(\zeta_1,\dots,\zeta_N) \prod_{j=k+1}^N \, dA(\zeta_j).
\end{equation}
It is well known \cite{MR1928853} that the ensemble \eqref{Gibbs} forms a Pfaffian point process.
In other words, there is a two-variable function $\bfkappa_N$, called the \emph{(skew) pre-kernel}, such that
\begin{equation} \label{bfR Pfa}
\bfR_{N,k}(\zeta_1,\cdots, \zeta_k) =\prod_{j=1}^{k} (\overline{\zeta}_j-\zeta_j)  \Pf \Big[
e^{ -NQ(\zeta_j)/2-NQ(\zeta_l)/2 }
\begin{pmatrix}
\bfkappa_N(\zeta_j,\zeta_l) & \bfkappa_N(\zeta_j,\bar{\zeta}_l)
\smallskip
\\
\bfkappa_N(\bar{\zeta}_j,\zeta_l) & \bfkappa_N(\bar{\zeta}_j,\bar{\zeta}_l)
\end{pmatrix}  \Big]_{ j,l=1,\cdots, k }.
\end{equation}

By a change of measures, it is easy to see that the correlation function $R_{N,k}$ of the rescaled process $\boldsymbol{z}$ in \eqref{rescaling} is given by
\begin{equation}
R_{N,k}(z_1,\cdots, z_k)=\gamma_N^{2k} \,  \bfR_{N,k}(\zeta_1,\cdots,\zeta_k).
\end{equation}
In particular, with the rescaled pre-kernel
\begin{align} \label{kappa rescaling}
\begin{split}
\kappa_N(z,w) :=\gamma_N^3 \bfkappa_N(\zeta,\eta),
\end{split}
\end{align}
we have
\begin{equation} \label{RNk rescaling}
R_{N,k}(z_1,\cdots, z_k) =\prod_{j=1}^{k} (\bar{z}_j-z_j)  \Pf \Big[ e^{ -\frac{N}{2} ( Q( p+\frac{z_j}{\sqrt{N \delta}} )+Q( p+ \frac{z_l}{\sqrt{N \delta}} ) ) } \begin{pmatrix}
\kappa_N(z_j,z_l) & \kappa_N(z_j,\bar{z}_l) \\
\kappa_N(\bar{z}_j,z_l) & \kappa_N(\bar{z}_j,\bar{z}_l)
\end{pmatrix}  \Big]_{ j,l=1,\cdots k }.
\end{equation}
We remark that different pre-kernels may give rise to the same correlation functions.
In particular, we call two pre-kernels $\kappa_N$ and $\widetilde{\kappa}_N$ \emph{equivalent} if there exists a sequence of unimodular functions $g_N:\C \to \C$ with $g_N(\overline{\zeta}) = 1/g_N(\zeta)$ such that $\widetilde{\kappa}_N(z, w) = g_N(z) g_N(w) \kappa_N(z, w)$.
In what follows, we also call $c_N(z, w) := g_N(z) g_N(w)$ a \emph{cocycle}.

The skew-symmetric form $\langle \cdot , \cdot  \rangle_s$ is given by
\begin{equation*}
\langle f, g \rangle_s := \int_{\C} \Big( f(\zeta) g(\bar{\zeta}) - g(\zeta) f(\bar{\zeta}) \Big) (\zeta - \bar{\zeta}) e^{-N Q(\zeta)} \,dA(\zeta).
\end{equation*}
Let $q_m$ be a family of monic polynomials of degree $m$ that satisfy the following skew-orthogonality conditions with skew-norms $r_k > 0$: for all $k, l \in \mathbb{N}$
\begin{equation}
\langle q_{2k}, q_{2l} \rangle_s = \langle q_{2k+1}, q_{2l+1} \rangle_s = 0, \qquad \langle q_{2k}, q_{2l+1} \rangle_s = -\langle q_{2l+1}, q_{2k} \rangle_s = r_k  \,\delta_{k, l},
\end{equation}
where $\delta_{k, l}$ is the Kronecker delta.
Then the pre-kernel $\bfkappa_N$ has a canonical representation in terms of the skew-orthogonal polynomials
\begin{equation}\label{bfkappaN skewOP}
\bfkappa_N(\zeta,\eta)=\sum_{k=0}^{N-1} \frac{q_{2k+1}(\zeta) q_{2k}(\eta) -q_{2k}(\zeta) q_{2k+1}(\eta)}{r_k}.
\end{equation}

We present some examples of skew-orthogonal polynomials that will be used in the following sections.

\begin{itemize}
    \item \textbf{Example 1. (Elliptic potential)}
For the elliptic potential $Q$ in \eqref{Q elliptic}, it was obtained by Kanzieper \cite{MR1928853} that the associated skew-orthogonal polynomials $q_k$ can be expressed in terms of the Hermite polynomial
\begin{equation}
H_k(z):=(-1)^k e^{z^2} \frac{d^k}{dz^k} e^{-z^2}=k! \sum_{m=0}^{ \lfloor k/2 \rfloor } \frac{(-1)^m}{ m! (k-2m)! } (2z)^{k-2m}.
\end{equation}
To be precise, we have
\begin{equation} \label{Hermite skew}
q_{2k+1}(\zeta) = \Big( \frac{\tau}{2N} \Big)^{k+\frac12} H_{2k+1}\Big( \sqrt{ \tfrac{N}{2\tau} } \zeta \Big), \qquad q_{2k}(\zeta) = \Big( \frac{2}{N} \Big)^k k! \sum_{l=0}^{k}  \frac{(\tau/2)^l}{(2l)!!} H_{2l}  \Big( \sqrt{ \tfrac{N}{2\tau} } \zeta \Big)
\end{equation}
and their skew-norms $r_k$ are given by
\begin{equation} \label{Hermite skew norm}
r_k = 2(1-\tau)^{3/2}(1+\tau)^{1/2}\frac{(2k+1)!}{N^{2k+2}}.
\end{equation}
This also follows from a more general method of constructing skew-orthogonal polynomials \cite[Theorem 3.1]{akemann2021skew}.
\smallskip
\item \textbf{Example 2. (Radially symmetric potential)}  Let us consider a general radially symmetric potential $Q(\zeta)=Q(|\zeta|)$ with $Q(\zeta) \gg 4 \log |\zeta|$ as $\zeta \to \infty$. We write
\begin{equation} \label{hk Qradial}
    h_k:=\int_\C |\zeta|^{2k} e^{-N Q(\zeta)} \, dA(\zeta).
\end{equation}
for the orthogonal norm.
Then it is easy to show that
\begin{equation} \label{skew op_rad}
	q_{2k+1}(\zeta)=\zeta^{2k+1}, \qquad
	q_{2k}(\zeta)=\zeta^{2k}+\sum_{l=0}^{k-1}  \zeta^{2l} \prod_{j=0}^{k-l-1} \frac{h_{2l+2j+2}  }{ h_{2l+2j+1} }, \qquad
	r_k=2h_{2k+1}
\end{equation}
forms a family of skew-orthogonal polynomials, see e.g.~\cite[p.7]{MR3066113} and \cite[Theorem 3.1]{akemann2021skew}.
\end{itemize}

\section{Scaling limits of the almost-Hermitian ensembles}\label{Section_AH}

In this section, we study the elliptic Ginibre ensemble in the almost-Hermitian regime and prove Theorems~\ref{Thm_AH bulk} and ~\ref{Thm_AH edge}.
In Subsection~\ref{Subsec_CDI}, we outline the strategy of our proofs based on the Christoffel-Darboux formula.
Subsections~\ref{Subsec_AH bulk} and ~\ref{Subsec_AH edge} are devoted to the study of the bulk scaling limit (Theorem~\ref{Thm_AH bulk}) and the edge scaling limit (Theorem~\ref{Thm_AH edge}) respectively.

\subsection{Strategy of the proof: the Christoffel-Darboux formula}\label{Subsec_CDI}

Combining \eqref{kappa rescaling}, \eqref{bfkappaN skewOP}, \eqref{Hermite skew}, \eqref{Hermite skew norm}, we have the canonical representation of the pre-kernel
\begin{align*}
\begin{split}
 \label{kappaN cano}
\kappa_N(z,w)&=  \sqrt{2}(1+\tau)  \sum_{k=0}^{N-1}  \frac{ (\tau/2)^{k+\frac12} }{(2k+1)!!}   H_{2k+1} \Big(  \sqrt{ \tfrac{N}{2\tau} } p+\sqrt{\tfrac{1-\tau^2}{\tau}}z \Big)
 \sum_{l=0}^k  \frac{(\tau/2)^l}{(2l)!!} H_{2l}  \Big(  \sqrt{ \tfrac{N}{2\tau} } p+\sqrt{\tfrac{1-\tau^2}{\tau}}w  \Big)
\\
&\quad - \sqrt{2}(1+\tau)  \sum_{k=0}^{N-1}  \frac{ (\tau/2)^{k+\frac12} }{(2k+1)!!}   H_{2k+1} \Big(  \sqrt{ \tfrac{N}{2\tau} } p+\sqrt{\tfrac{1-\tau^2}{\tau}}w \Big)
 \sum_{l=0}^k  \frac{(\tau/2)^l}{(2l)!!} H_{2l}  \Big(  \sqrt{ \tfrac{N}{2\tau} } p+\sqrt{\tfrac{1-\tau^2}{\tau}}z  \Big).
\end{split}
\end{align*}
Following \cite{byun2021universal}, let us introduce
\begin{equation} \label{transformed kernel}
\widehat{\kappa}_N(z,w) := \omega_N(z,w) \kappa_N(z,w),
\end{equation}
where
\begin{equation}
\omega_N(z,w) = \exp\Big[ \tau \, \Big( p \sqrt{\tfrac{N}{2(1-\tau^2)}}+z \Big)^2 +  \tau \, \Big( p \sqrt{\tfrac{N}{2(1-\tau^2)}}+w \Big)^2 - \Big( p\sqrt{\tfrac{N}{1-\tau^2}}+\sqrt{2}z \Big) \Big( p\sqrt{\tfrac{N}{1-\tau^2}}+\sqrt{2}w \Big) \Big].
\end{equation}
Note that by \eqref{Q elliptic} and \eqref{microscale Laplacian}, we have
\begin{equation}
e^{-\frac{N}{2} ( Q( p+\frac{z}{\sqrt{N \delta}} )+Q( p+ \frac{w}{\sqrt{N \delta}} ) )} \frac{1}{\omega_N(z,w)}
=e^{-\abs{z}^2 -\abs{w}^2 +2zw}  \, \frac{1}{c_N(z,w)},
\end{equation}
where the cocycle $c_N(z,w)$ is given by
\begin{equation}
c_N(z,w)=\exp\Big( -i\sqrt{2N\tfrac{1-\tau}{1+\tau}}\, p \, \im z - i\sqrt{2N\tfrac{1-\tau}{1+\tau}}\, p \, \im w +i\tau\im z^2 +i\tau \im w^2 \Big).
\end{equation}
Therefore as $N \to \infty$, we have the uniform limit
\begin{equation} \label{up to cocycle}
\lim_{N\to\infty} c_N(z,w) e^{-\frac{N}{2} ( Q( p+\frac{z}{\sqrt{N \delta}} )+Q( p+ \frac{w}{\sqrt{N \delta}} ) )} \kappa_N(z,w)
= e^{-\abs{z}^2 -\abs{w}^2 +2zw} \, \widehat{\kappa}(z,w),
\end{equation}
where $\widehat{\kappa}:=\lim_{N\to \infty} \widehat{\kappa}_N$.
Here the convergence is uniform on compact subsets of $\C$.

The key idea which allows us to perform the asymptotic analysis is the following version of the Christoffel-Darboux formula \cite[Proposition 1.1]{byun2021universal}.

\begin{lem} \textup{(Christoffel-Darboux formula for the skew-orthogonal Hermite polynomial kernel)} \label{Lem_CDI skew}
We have
\begin{equation}
 \partial_z \widehat{\kappa}_N(z,w)=2(z-w)\widehat{\kappa}_N(z,w) + \RN{1}_N(z,w)-\RN{2}_N(z,w),
\end{equation}
where
\begin{equation}
\RN{1}_N(z,w)=2\sqrt{1-\tau^2} \,  \omega_N(z,w) \sum_{k=0}^{2N-1}  \frac{ (\tau/2)^{k} }{k!}   H_{k}\Big( \sqrt{ \tfrac{N}{2\tau} } p+\sqrt{\tfrac{1-\tau^2}{\tau}}z \Big) H_{k}  \Big( \sqrt{ \tfrac{N}{2\tau} } p+\sqrt{\tfrac{1-\tau^2}{\tau}}w \Big)
\end{equation}
and
\begin{equation}
\RN{2}_N(z,w)=   2\sqrt{1-\tau^2}  \,\omega_N(z,w)
 \frac{ (\tau/2)^{N} }{(2N-1)!!}  H_{2N}  \Big( \sqrt{ \tfrac{N}{2\tau} } p+\sqrt{\tfrac{1-\tau^2}{\tau}}z \Big)  \sum_{l=0}^{N-1}  \frac{(\tau/2)^l}{(2l)!!} H_{2l}  \Big( \sqrt{ \tfrac{N}{2\tau} } p+\sqrt{\tfrac{1-\tau^2}{\tau}}w \Big) .
\end{equation}
\end{lem}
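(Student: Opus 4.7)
The plan is to establish the identity by direct computation starting from the canonical skew-orthogonal Hermite representation of $\kappa_N$ displayed above the lemma, essentially reproducing the approach of [byun2021universal, Proposition 1.1]. I would write $\widehat{\kappa}_N = \omega_N \kappa_N$ and apply $\partial_z$ via the product rule, separating the contribution $(\partial_z \omega_N)\kappa_N$ from $\omega_N (\partial_z \kappa_N)$. A direct computation with the explicit quadratic expression for $\omega_N$ shows that $\partial_z \log \omega_N$ contributes, after cancellation against certain terms coming from the derivative of the Hermite polynomials, precisely the factor $2(z-w)$ multiplying $\widehat{\kappa}_N$; the remaining pieces must be shown to reassemble into $\mathrm{I}_N - \mathrm{II}_N$.

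For the derivative $\partial_z \kappa_N$, I would use $H_m'(x) = 2m H_{m-1}(x)$ together with the chain rule $\partial_z H_m(\tfrac{N}{2\tau}^{1/2} p + \tfrac{1-\tau^2}{\tau}^{1/2} z) = 2m \sqrt{(1-\tau^2)/\tau}\, H_{m-1}(\cdot)$. Applied to the two terms in the skew-orthogonal representation, this produces one double sum whose summand is $(2k+1) H_{2k}(Z) H_{2l}(W)$ (from $q'_{2k+1}(z) q_{2k}(w)$) and another whose summand is $2l\, H_{2l-1}(Z) H_{2k+1}(W)$ (from $q_{2k+1}(w) q'_{2k}(z)$). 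The key algebraic step is to swap the order of the inner $l$-summation with the outer $k$-summation in each piece, and then combine these with a discrete integration by parts (summation by parts in $k$), using the fall-in-degree identity $2(2k+1)/r_k \cdot (\tau/2)^{k+1/2} = \text{const}\cdot (\tau/2)^{2k+1}/(2k+1)!$, so that consecutive odd and even index contributions merge into the single-index Mehler-type sum $\sum_{k=0}^{2N-1} \frac{(\tau/2)^k}{k!} H_k(Z) H_k(W)$. This is exactly the unscaled Hermite kernel appearing in $\mathrm{I}_N$.

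The telescoping in the previous step is not exact: the rearrangement leaves an uncancelled term at the top index $k = N-1$, coming from the truncation of the partial sum $\sum_{l=0}^{k} \frac{(\tau/2)^l}{(2l)!!} H_{2l}$ at $l = k$. Tracking this boundary contribution carefully yields the product $H_{2N}(Z) \cdot \sum_{l=0}^{N-1} \frac{(\tau/2)^l}{(2l)!!} H_{2l}(W)$ with the prefactor $(\tau/2)^N/(2N-1)!!$, which matches $\mathrm{II}_N$ up to the overall $2\sqrt{1-\tau^2}\,\omega_N$ factor pulled out in both terms. Collecting the coefficients and the common $\omega_N$ prefactor then yields the claim.

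The main obstacle is bookkeeping: the asymmetry between the pure polynomial $q_{2k+1}$ and the partial-sum polynomial $q_{2k}$ means that differentiating and then swapping sums produces four double sums of different shapes, and one must carefully combine the prefactors $(\tau/2)^{k+1/2}/(2k+1)!!$, $(\tau/2)^l/(2l)!!$, and $1/r_k$ (with $r_k = 2(1-\tau)^{3/2}(1+\tau)^{1/2}(2k+1)!/N^{2k+2}$) to recognise the Mehler kernel and isolate the single-term boundary contribution. Once the shift to $\widehat{\kappa}_N$ via $\omega_N$ is performed, the $N$-dependent scalings that would otherwise appear explicitly are absorbed, so that the final identity depends only on $\tau$ and the scaled arguments.
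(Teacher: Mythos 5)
Worth flagging up front: the paper does not prove this lemma at all — it simply recalls it from \cite[Proposition 1.1]{byun2021universal}. So strictly speaking there is no proof of this statement inside the paper to compare against; what you are doing is reconstructing the argument of the cited reference. Your overall plan — expand $\widehat\kappa_N = \omega_N\kappa_N$, differentiate term by term using the Hermite derivative relation, rearrange the double sum to exhibit the Mehler kernel plus a boundary term — is indeed the standard route for establishing such a Christoffel--Darboux identity for skew-orthogonal Hermite systems, and it is in the spirit of the source reference.

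However, two of the intermediate steps as stated would not go through. First, the claimed ``fall-in-degree identity'' $2(2k+1)/r_k\cdot(\tau/2)^{k+1/2} = \text{const}\cdot(\tau/2)^{2k+1}/(2k+1)!$ cannot hold: since $r_k = 2(1-\tau)^{3/2}(1+\tau)^{1/2}(2k+1)!/N^{2k+2}$ carries no $\tau$-power, the left side scales like $(\tau/2)^{k+1/2}$ while the right side scales like $(\tau/2)^{2k+1}$, so the two sides have different $k$-dependence in $\tau$. The Mehler coefficient $(\tau/2)^k/k!$ arises only after an inner product $\frac{(\tau/2)^{k+1/2}}{(2k+1)!!}\cdot\frac{(\tau/2)^{l}}{(2l)!!}$ is contracted at $l=k$ together with a companion piece, not from normalising a single outer prefactor. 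Second, you only invoke $H_n'(x)=2nH_{n-1}(x)$, but this alone cannot regenerate the $2(z-w)\widehat\kappa_N$ term: a direct computation gives $\partial_z\log\omega_N = 2(\tau-1)\,p\sqrt{N/(2(1-\tau^2))} + 2\tau z - 2w$, which is \emph{not} $2(z-w)$; the missing linear-in-$z$ and constant pieces can only be supplied by applying the three-term recurrence $H_{n+1}(x)=2xH_n(x)-2nH_{n-1}(x)$ to raise degrees and create $\zeta H_n$ factors that cancel against $\partial_z\log\omega_N$. Without explicitly invoking the three-term recurrence, the cancellation you appeal to in the first step is asserted but not substantiated, and the subsequent telescoping cannot close.
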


We now write
\begin{equation} \label{upsilon kappa}
\upsilon(z,w):=e^{-z^2-w^2}\kappa(z,w)=e^{-(z-w)^2} \widehat{\kappa}(z,w)
\end{equation}
and
\begin{equation}
\RN{1}(z,w):=\lim_{N\to\infty}\RN{1}_N(z,w), \qquad \RN{2}(z,w):=\lim_{N\to\infty}\RN{2}_N(z,w),
\end{equation}
where the convergence is uniform on compact subsets of $\C$ (we will show this later in the proof of Proposition~\ref{Prop_inhom terms AH bulk}).
Then by Lemma~\ref{Lem_CDI skew}, we have
\begin{equation}  \label{upsilon differential eq}
\frac{\pa}{\pa z} \upsilon(z,w)=e^{-(z-w)^2} \Big( \RN{1}(z,w)-\RN{2}(z,w) \Big).
\end{equation}

We analyse the large-$N$ limit of $\RN{1}_N$ and $\RN{2}_N$ using the expressions
\begin{equation}
\RN{1}_N(z,w) = \RN{1}_N(z_0,w) + \int_{z_0}^{z} \big( \pa_z \RN{1}_N \big)(t,w) \, dt, \qquad
\RN{2}_N(z,w) = \RN{2}_N(z,w_0) + \int_{w_0}^{w} \big( \pa_w \RN{2}_N \big)(z,t) \, dt.
\end{equation}
The following version of the Christoffel-Darboux formula for the kernel of the complex elliptic Ginibre ensemble was obtained by Lee and Riser \cite[Proposition 2.3]{MR3450566}, see also \cite[Section 3]{byun2021lemniscate} for more general identities of such kind.

\begin{lem} \textup{(Christoffel-Darboux formula for the orthogonal Hermite polynomial kernel)} \label{Lem_CDI orthogonal}
The function
\begin{equation}
S_N(\zeta, \eta) := \sum_{k=0}^{N-1} \frac{(\tau/2)^k}{k!} H_k(\zeta) H_k(\eta)
\end{equation}
satisfies
\begin{equation} \label{Derivative S_N}
\begin{split}
\pa_\zeta S_N(\zeta, \eta) &= \frac{2\tau}{1-\tau^2} (\eta-\tau\zeta) S_N(\zeta, \eta) + \frac{2}{1-\tau^2} \Big( \frac{\tau}{2} \Big)^{N} \frac{\tau H_{N}(\zeta) H_{N-1}(\eta) - H_{N-1}(\zeta) H_{N}(\eta)}{(N-1)!}.
\end{split}
\end{equation}
\end{lem}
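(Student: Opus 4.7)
My plan is to treat the identity as a finite algebraic one in $\tau$, $\zeta$, and $\eta$ and prove it by induction on $N$, using only the differentiation rule $H_k'(z)=2kH_{k-1}(z)$ and the three-term recurrence $H_{k+1}(z)=2zH_k(z)-2kH_{k-1}(z)$ for Hermite polynomials. After clearing the factor $1-\tau^2$, I abbreviate
\[
L_N:=(1-\tau^2)\,\pa_\zeta S_N(\zeta,\eta)-2\tau(\eta-\tau\zeta)\,S_N(\zeta,\eta),\qquad R_N:=\tfrac{2(\tau/2)^N}{(N-1)!}\bigl(\tau H_N(\zeta)H_{N-1}(\eta)-H_{N-1}(\zeta)H_N(\eta)\bigr),
\]
so the claim reduces to $L_N=R_N$ for every $N\ge 1$.

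The base case $N=1$ is immediate: $S_1\equiv 1$ gives $L_1=-2\tau(\eta-\tau\zeta)$, while $H_0\equiv 1$ and $H_1(z)=2z$ produce $R_1=\tau(2\tau\zeta-2\eta)$, and these agree. For the inductive step I would compute $L_{N+1}-L_N$ and $R_{N+1}-R_N$ separately and verify equality. The identity $S_{N+1}-S_N=(\tau/2)^N H_N(\zeta)H_N(\eta)/N!$ combined with the differentiation rule yields
\[
L_{N+1}-L_N=\tfrac{(\tau/2)^N}{N!}\bigl[2N(1-\tau^2)\,H_{N-1}(\zeta)H_N(\eta)-2\tau(\eta-\tau\zeta)\,H_N(\zeta)H_N(\eta)\bigr].
\]
On the other side, after factoring out the same prefactor $\tfrac{(\tau/2)^N}{N!}$, $R_{N+1}-R_N$ becomes a linear combination of $\tau^2 H_{N+1}(\zeta)H_N(\eta)$, $-\tau H_N(\zeta)H_{N+1}(\eta)$, $-2N\tau H_N(\zeta)H_{N-1}(\eta)$, and $2NH_{N-1}(\zeta)H_N(\eta)$; applying the three-term recurrence once in each variable expands $H_{N+1}$ in terms of $H_N$ and $H_{N-1}$, after which the two $H_N(\zeta)H_{N-1}(\eta)$ contributions cancel and the remaining combination coincides with the bracketed expression above.

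The only genuine obstacle is the bookkeeping in this final algebraic check, where the $H_N(\zeta)H_{N-1}(\eta)$ coefficient must vanish identically: it is precisely this cancellation that encodes the skew-symmetric (Wronskian-type) structure of the right-hand side. A more conceptual alternative would be to note that the Mehler generating function $S_\infty(\zeta,\eta):=(1-\tau^2)^{-1/2}\exp\bigl(\tfrac{2\tau\zeta\eta-\tau^2(\zeta^2+\eta^2)}{1-\tau^2}\bigr)$ is annihilated by the first-order operator $(1-\tau^2)\pa_\zeta-2\tau(\eta-\tau\zeta)$, so that $L_N$ equals the negative of the same operator applied to the tail $S_\infty-S_N$; a single summation by parts against the Hermite recurrence then isolates the boundary term at level $N$. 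The induction above, however, is fully self-contained and avoids any analytic considerations.
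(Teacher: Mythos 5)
Your inductive proof is correct and complete. The paper itself does not prove this lemma at all --- it simply cites Lee and Riser \cite[Proposition 2.3]{MR3450566} for the identity --- so your argument is genuinely a different route: a self-contained, purely algebraic induction relying only on $H_k'(z)=2kH_{k-1}(z)$ and $H_{k+1}(z)=2zH_k(z)-2kH_{k-1}(z)$. I verified the step you identify as the ``only genuine obstacle'': writing
\begin{equation}
R_{N+1}-R_N=\frac{(\tau/2)^N}{N!}\Big[\tau^2 H_{N+1}(\zeta)H_N(\eta)-\tau H_N(\zeta)H_{N+1}(\eta)-2N\tau H_N(\zeta)H_{N-1}(\eta)+2N H_{N-1}(\zeta)H_N(\eta)\Big]
\end{equation}
and expanding $H_{N+1}$ via the recurrence in each variable, the $H_N(\zeta)H_{N-1}(\eta)$ coefficient is $2N\tau-2N\tau=0$, the $H_{N-1}(\zeta)H_N(\eta)$ coefficient collapses to $2N(1-\tau^2)$, and the $H_N(\zeta)H_N(\eta)$ coefficient collapses to $-2\tau(\eta-\tau\zeta)$, matching $L_{N+1}-L_N$ exactly. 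The base case $L_1=R_1=-2\tau(\eta-\tau\zeta)$ also checks. What your approach buys is transparency and independence from the external reference; what the citation buys the authors is brevity, since Lee--Riser establish a family of such ``Christoffel--Darboux-type'' identities for planar Hermite kernels in one place. Your side remark about the Mehler kernel $S_\infty$ being annihilated by $(1-\tau^2)\pa_\zeta-2\tau(\eta-\tau\zeta)$ is also accurate and gives a clean explanation of why the homogeneous part of the operator is exactly the one appearing in the lemma, with the right-hand side of \eqref{Derivative S_N} arising purely as a truncation boundary term.
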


Using this lemma, we have the following expressions.

\begin{lem} \label{Lem_RN12 derivatives}
We have
\begin{align}
\pa_z \RN{1}_N(z,w)&= \frac{ 4 }{ \sqrt{\tau}}  \frac{ (\tau/2)^{2N} }{(2N-1)!} \omega_N(z,w)  \Big( \tau H_{2N}(\zeta) H_{2N-1}(\eta) - H_{2N-1}(\zeta) H_{2N}(\eta) \Big)
\end{align}
and
\begin{equation}
\pa_w \Big[ e^{-(z-w)^2} \RN{2}_N(z,w) \Big] =   (\tau-1)\frac{4}{\sqrt{\tau}} \frac{  (\tau/2)^{2N} }{(2N-1)!} e^{-(z-w)^2}\omega_N(z,w) H_{2N}(\zeta) H_{2N-1}(\eta),
\end{equation}
where
\begin{equation} \label{Hermite arguments zeta eta}
\zeta=\sqrt{ \tfrac{N}{2\tau} } p+\sqrt{\tfrac{1-\tau^2}{\tau}}z, \qquad \eta=\sqrt{ \tfrac{N}{2\tau} } p+\sqrt{\tfrac{1-\tau^2}{\tau}}w.
\end{equation}
\end{lem}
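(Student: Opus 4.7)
The plan is to treat the two formulas independently: the first follows from a direct application of the Christoffel--Darboux identity of Lemma~\ref{Lem_CDI orthogonal}, while the second requires a telescoping identity for the truncated Hermite sum appearing in $\RN{2}_N$.

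For $\pa_z \RN{1}_N$, I would first recognise that $\RN{1}_N(z,w) = 2\sqrt{1-\tau^2}\,\omega_N(z,w)\,S_{2N}(\zeta,\eta)$, where $S_{2N}$ is the kernel of Lemma~\ref{Lem_CDI orthogonal} (with $N$ replaced by $2N$). Applying the product rule, the chain rule $\pa_z\zeta = \sqrt{(1-\tau^2)/\tau}$, and Lemma~\ref{Lem_CDI orthogonal} produces two contributions proportional to $S_{2N}(\zeta,\eta)$: one from the $\omega_N$-derivative, and one from the first term on the right-hand side of \eqref{Derivative S_N}. The key point is that these cancel. This is verified by a direct expansion: one computes
\[
\pa_z\log\omega_N = -\sqrt{2}\,p\,\sqrt{N(1-\tau)/(1+\tau)} + 2\tau z - 2w, \qquad 4\sqrt{\tau}(\eta-\tau\zeta) = 2\sqrt{2}\,p(1-\tau)\sqrt{N} + 4\sqrt{1-\tau^2}(w-\tau z),
\]
which combine via $(1-\tau^2)(1-\tau) = (1-\tau)^2(1+\tau)$ to yield $2\sqrt{1-\tau^2}\,\pa_z\omega_N + 4\sqrt{\tau}\,\omega_N\,(\eta-\tau\zeta) = 0$. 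What remains is precisely the boundary term, and after collecting the constants $2\sqrt{1-\tau^2}\cdot\sqrt{(1-\tau^2)/\tau}\cdot\frac{2}{1-\tau^2} = \frac{4}{\sqrt{\tau}}$ one obtains the claimed formula.

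For $\pa_w\bigl[e^{-(z-w)^2}\RN{2}_N\bigr]$, the first observation is that the prefactor simplifies cleanly in the $(\zeta,\eta)$-variables: a short computation starting from the definition of $\omega_N$ gives $e^{-(z-w)^2}\omega_N(z,w) = \exp[-\tau(\zeta^2+\eta^2)/(1+\tau)]$. Setting $T_N(\eta) := \sum_{l=0}^{N-1}\frac{(\tau/2)^l}{(2l)!!}H_{2l}(\eta)$, and noting that only $\eta$ depends on $w$ with $\pa_w\eta = \sqrt{(1-\tau^2)/\tau}$, the calculation reduces to evaluating $\pa_\eta\bigl[e^{-\tau\eta^2/(1+\tau)}T_N(\eta)\bigr]$, or equivalently to simplifying $(1+\tau)T_N'(\eta) - 2\tau\eta\,T_N(\eta)$.

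The decisive algebraic step is then the telescoping identity
\[
(1+\tau)T_N'(\eta) - 2\tau\eta\,T_N(\eta) = -\frac{2(\tau/2)^N}{(2N-2)!!}H_{2N-1}(\eta).
\]
This follows by combining $H_{2l}'(\eta) = 4l H_{2l-1}(\eta)$ with $4l/(2l)!! = 2/(2l-2)!!$ to rewrite $T_N'(\eta) = \tau\sum_{m=0}^{N-2}\frac{(\tau/2)^m}{(2m)!!}H_{2m+1}(\eta)$, and the recurrence $H_{2l+1}(\eta) = 2\eta H_{2l}(\eta) - H_{2l}'(\eta)$ to obtain $T_N'(\eta) - 2\eta T_N(\eta) = -\sum_{l=0}^{N-1}\frac{(\tau/2)^l}{(2l)!!}H_{2l+1}(\eta)$. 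Writing $2(1+\tau)T_N'(\eta) - 4\tau\eta T_N(\eta) = 2T_N'(\eta) + 2\tau[T_N'(\eta) - 2\eta T_N(\eta)]$ then produces two sums over $H_{2m+1}$ that differ only by the $l=N-1$ term, which is exactly the claimed telescoping remainder. Substituting back and using $(2N-1)!!(2N-2)!! = (2N-1)!$ gives the desired formula with overall prefactor $(\tau-1)\frac{4}{\sqrt{\tau}}\frac{(\tau/2)^{2N}}{(2N-1)!}$. The main obstacle is purely bookkeeping: matching the $\sqrt{N}$-terms and $z,w$-terms in the first part, and handling the double-factorial reindexing in the second to make the telescoping visible — both reducing, once grouped correctly, to elementary Hermite manipulations.
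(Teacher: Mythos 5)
Your proof is correct and follows exactly the route the paper indicates (the paper only says ``Using this lemma, we have the following expressions'' without spelling it out): differentiate $\RN{1}_N = 2\sqrt{1-\tau^2}\,\omega_N\,S_{2N}$ via Lemma~\ref{Lem_CDI orthogonal} and observe the cancellation of the $\log$-derivative of $\omega_N$ against the homogeneous term $\frac{2\tau}{1-\tau^2}(\eta-\tau\zeta)$, then derive the stated $\RN{2}_N$ identity from the telescoping of the truncated even-Hermite sum. The algebra in both halves checks out, including the key reduction $e^{-(z-w)^2}\omega_N = \exp[-\tau(\zeta^2+\eta^2)/(1+\tau)]$ and the telescoping identity $(1+\tau)T_N'-2\tau\eta T_N = -\tfrac{2(\tau/2)^N}{(2N-2)!!}H_{2N-1}$, and the bookkeeping $(2N-1)!!(2N-2)!!=(2N-1)!$ reproduces the claimed prefactor.
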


In the following subsections, we derive the asymptotics of the inhomogeneous terms in Lemma~\ref{Lem_CDI skew}, see Propositions~\ref{Prop_inhom terms AH bulk} and ~\ref{Prop_inhom terms AH edge} below.
Then, by solving the resulting limiting differential equations for the pre-kernel, we complete the proof of Theorems~\ref{Thm_AH bulk} and ~\ref{Thm_AH edge}.

\subsection{Almost-Hermitian bulk scaling limit} \label{Subsec_AH bulk}

In this subsection, we consider the almost-Hermitian bulk scaling limit where $\tau$ is given by \eqref{tau AH bulk} and $p\in (-\sqrt{2}(1+\tau),\sqrt{2}(1+\tau))$.

We aim to show the following proposition.
\begin{prop} \label{Prop_inhom terms AH bulk}
We have
\begin{equation} \label{RN12 AH bulk}
\RN{1}(z,w) =\frac{2}{\sqrt{\pi}}  \, e^{(z-w)^2} \int_{I} e^{-t^2} \cos(2t(z-w)) \,dt, \qquad  \RN{2}(z,w)=0,
\end{equation}
where $I$ is given by \eqref{I tilde c}.
\end{prop}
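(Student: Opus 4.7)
The plan is to analyse $\RN{1}_N$ and $\RN{2}_N$ separately, combining the differential identities of Lemma~\ref{Lem_RN12 derivatives} with classical Plancherel--Rotach asymptotics for Hermite polynomials of degree $\sim 2N$ at the arguments $\zeta,\eta$ in \eqref{Hermite arguments zeta eta}. The hypothesis $p\in(-\sqrt{2}(1+\tau),\sqrt{2}(1+\tau))$ ensures that, for large $N$, both $\zeta$ and $\eta$ sit in the oscillatory (bulk) regime of $H_{2N}$ and $H_{2N-1}$, which is exactly the Plancherel--Rotach window that yields a clean trigonometric scaling limit.

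For $\RN{1}$, I would start from the closed-form expression for $\partial_z\RN{1}_N$ in Lemma~\ref{Lem_RN12 derivatives}. Plancherel--Rotach gives each Hermite polynomial $H_{2N-\epsilon}$ ($\epsilon\in\{0,1\}$) as a Gaussian envelope times a trigonometric factor with a large phase $\Phi_{2N-\epsilon}$; the Stirling-type prefactor $(\tau/2)^{2N}/(2N-1)!$ together with the cocycle $\omega_N$ is designed exactly to cancel these envelopes, leaving a bounded oscillatory integrand. Expanding $\Phi_{2N-\epsilon}(\zeta)$ to first order about the macroscopic point $\sqrt{N/(2\tau)}\,p$, the increment generated by the microscopic displacement $\sqrt{(1-\tau^2)/\tau}\,z$ is parametrised by a ``velocity'' $t$ ranging over $I=(-\tilde c,\tilde c)$ from \eqref{I tilde c}; here $\tilde c=c\sqrt{1-p^2/8}$ appears through the local (semicircle) density at $p$. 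A product-to-sum manipulation then converts the antisymmetric combination $\tau H_{2N}(\zeta)H_{2N-1}(\eta)-H_{2N-1}(\zeta)H_{2N}(\eta)$ into the kernel $\cos(2t(z-w))$ weighted by $e^{-t^2}$, and the discrete sum over Hermite indices becomes the Riemann sum representation of $\int_I$. The pre-kernel $\RN{1}_N(z,w)$ is then recovered by integrating $\partial_z\RN{1}_N$ starting from the diagonal $z=w$, using the symmetry $\RN{1}_N(z,w)=\RN{1}_N(w,z)$; on the diagonal $\RN{1}_N(w,w)$ reduces to (a rescaling of) the squared Hermite sum, whose almost-Hermitian Mehler-type asymptotic is the constant $\tfrac{2}{\sqrt{\pi}}\int_I e^{-t^2}\,dt$, which matches the value at $z=w$ of the claimed right-hand side of \eqref{RN12 AH bulk} and closes the antidifferentiation.

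For $\RN{2}$, the crucial observation is the extra factor $\tau-1=-c^2/(2N)$ in the second identity of Lemma~\ref{Lem_RN12 derivatives}. Combined with the same Plancherel--Rotach bound as above, which keeps $\omega_N(z,w)(\tau/2)^{2N}H_{2N}(\zeta)H_{2N-1}(\eta)/(2N-1)!$ uniformly bounded on compacts, this yields
\begin{equation*}
\partial_w\!\left[e^{-(z-w)^2}\RN{2}_N(z,w)\right]=O(N^{-1})
\end{equation*}
locally uniformly in $(z,w)$. Integrating in $w$ from a reference point then shows that $e^{-(z-w)^2}\RN{2}_N$ is locally constant in $w$ up to $O(N^{-1})$. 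To pin down the constant, I would combine the antisymmetry $\widehat{\kappa}_N(z,w)=-\widehat{\kappa}_N(w,z)$ with the symmetry of $\RN{1}_N$ in its two arguments: writing the differential equation \eqref{upsilon differential eq} for both $(z,w)$ and $(w,z)$ and subtracting yields, in the limit, a linear relation between $\RN{2}(z,w)$ and $\RN{2}(w,z)$ that, together with the near-constancy above, forces both to vanish.

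The main obstacle will be the bookkeeping of phases in the Plancherel--Rotach expansion for $\RN{1}$: to extract the Riemann-sum structure yielding the integral over $I$, the trigonometric expansions must be carried out uniformly in $(z,w)$ on compact sets, and the Gaussian envelopes produced by Plancherel--Rotach must match the cocycle $\omega_N$ precisely so that no spurious exponential factor survives the limit. Once that phase matching is in place, the identification with $\tfrac{2}{\sqrt{\pi}}e^{(z-w)^2}\int_I e^{-t^2}\cos(2t(z-w))\,dt$ reduces to elementary product-to-sum trigonometry, and the vanishing of $\RN{2}$ is a direct consequence of the $O(1/N)$ gain in its $w$-derivative together with the antisymmetry identification of its reference value.
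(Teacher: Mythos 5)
The split into deriving the large-$N$ limits of $\partial_z\RN{1}_N$ and $\partial_w[e^{-(z-w)^2}\RN{2}_N]$ via Plancherel--Rotach asymptotics (Lemma~\ref{Lem_RN12 derivatives}), and then integrating to pin down $\RN{1}$ and $\RN{2}$ from an initial slice, is exactly the strategy of the paper. For $\RN{1}$ your plan is workable: the paper uses the initial slice $z=\bar w$ (citing the complex elliptic Ginibre density in the almost-Hermitian regime) while you use $z=w$, but either determines $\RN{1}$ together with the $\partial_z$-limit. One cosmetic point: there is no ``Riemann sum over Hermite indices'' in this step, since $\partial_z\RN{1}_N$ is a single product of Hermite polynomials, not a sum; the integral over $I$ on the right of \eqref{RN12 AH bulk} emerges because the candidate $\RN{1}$ must have the correct $z$-derivative, not by summing over modes.

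The genuine gap is in the treatment of $\RN{2}$. You correctly observe that the factor $\tau-1 = -c^2/(2N)$ makes $\partial_w[e^{-(z-w)^2}\RN{2}_N] = O(1/N)$ locally uniformly, which after integration shows $e^{-(z-w)^2}\RN{2}(z,w)$ is a function of $z$ alone. But your proposal to determine this function by combining the antisymmetry $\widehat{\kappa}(z,w)=-\widehat{\kappa}(w,z)$ with the symmetry of $\RN{1}$ is vacuous. Writing $\upsilon(z,w)=e^{-(z-w)^2}\widehat{\kappa}(z,w)$, the Christoffel--Darboux identity gives $\partial_1\upsilon(z,w)=e^{-(z-w)^2}(\RN{1}(z,w)-\RN{2}(z,w))$ and, by applying the same identity at $(w,z)$ and invoking antisymmetry, $\partial_2\upsilon(z,w)=e^{-(z-w)^2}(-\RN{1}(z,w)+\RN{2}(w,z))$. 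These two relations never couple $\RN{2}(z,w)$ to $\RN{2}(w,z)$ in a non-trivial way: the antisymmetry of $\upsilon$ is automatic, and equality of mixed partials does not force $\RN{2}$ to vanish. In fact $\RN{2}_N$ has no useful transformation property under $(z,w)\mapsto(w,z)$, so no algebraic rearrangement can substitute for a direct evaluation of a concrete slice.

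What the paper actually does to pin down $\RN{2}$ is set $w=0$ and show $\RN{2}_N(z,0)\to 0$. This factorises $\RN{2}_N(z,0)$ into a prefactor (estimated by Lemma~\ref{Lem_SA Hermite AH bulk}) times $T_N(p)=e^{-\frac{N}{2(1+\tau)}p^2}\sum_{l<N}\frac{(\tau/2)^l}{(2l)!!}H_{2l}(\sqrt{N/(2\tau)}\,p)$. The real work in the proof is showing $T_N(p)\to 1/\sqrt{2}$: one checks $T_N(0)\to 1/\sqrt{2}$ from Hermite numbers, and then shows $\int_0^p T_N'(t)\,dt\to 0$ by casting $T_N'$ (via Lemma~\ref{Lem_CDI orthogonal}) as an oscillatory integral with non-vanishing phase derivative and applying a Riemann--Lebesgue/stationary-phase estimate. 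With $T_N(p)=O(1)$ in hand, the $\RN{2}_N(z,0)=O(N^{-1/2})$ bound follows. Your proposal has no analogue of this estimate of $T_N(p)$, which is the crux of the $\RN{2}$ half of the proposition, so the argument as written does not close.
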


We prove Theorem~\ref{Thm_AH bulk} using Proposition~\ref{Prop_inhom terms AH bulk}.

\begin{proof}[Proof of Theorem~\ref{Thm_AH bulk}]

Let us first show the alternative representation \eqref{kappa t.i.}.
Note that
\begin{align}
f_z'(u)=-\frac{1}{\pi} \int_I e^{-t^2/2} \cos(2t(u-z))\,dt, \quad I=(-\tilde{c},\tilde{c}).
\end{align}
Thus we have
\begin{align*}
&\quad W(f_{w},f_{z})(u) =  f_w(u) f_z'(u)-f_z(u) f_w'(u)
\\
&= \frac{1}{2\pi^2} \int_{I^2} e^{-t^2/2-s^2/2} \Big( \sin(2t(u-w)) \cos(2s(u-z))- \sin(2t(u-z)) \cos(2s(u-w)) \Big) \frac{1}{t} \,dt\,ds
\end{align*}
Notice here that
\begin{align*}
&\quad \sin(2t(u-w)) \cos(2s(u-z))=\frac{1}{4i}(e^{2it(u-w)}-e^{-2it(u-w)}) ( e^{2is(u-z)}+e^{-2is(u-z)}  )
\\
&= \frac{1}{4i} \Big( e^{2i(t+s)u} e^{-2itw-2is z}+e^{2i(t-s)u} e^{-2itw+2isz}-e^{2i(s-t)u}e^{2itw-2isz}-e^{-2i(t+s)u}e^{2itw+2isz} \Big).
\end{align*}
Since
\begin{equation*}
\delta(x)=\frac{1}{2\pi} \int_\R e^{iux}\,du = \frac{1}{\pi}  \int_\R e^{2iux}\,du
\end{equation*}
in the sense of distribution, we have
\begin{align*}
\int_\R \sin(2t(u-w)) \cos(2s(u-z))\,du&=\frac{\pi}{4i} \Big( \delta(t+s)+\delta(t-s) \Big)  \Big(e^{2it(z-w)}-e^{-2it(z-w)}\Big)
\\
&=\frac{\pi}{2} \Big( \delta(t+s)+\delta(t-s) \Big) \sin(2t(z-w)).
\end{align*}
Therefore we have
\begin{align*}
&\quad \int_I \int_\R e^{-t^2/2-s^2/2} \Big( \sin(2t(u-w)) \cos(2s(u-z))- \sin(2t(u-z)) \cos(2s(u-w)) \Big) \,du\,ds
\\
&= \pi \int_I \Big( \delta(t+s)+\delta(t-s) \Big) e^{-t^2/2-s^2/2} \sin(2t(z-w))\,ds = 2\pi \,e^{-t^2} \sin(2t(z-w)).
\end{align*}
By interchanging the integrals, this gives rise to
\begin{align*}
 \int_{ E } W(f_{w},f_{z})(u) \, du &=\frac{1}{\pi}\int_I e^{-t^2} \sin(2t(z-w))\frac{dt}{t}, \qquad E=\R,
\end{align*}
which leads to \eqref{kappa t.i.}.

Now it suffices to show that the function
\begin{equation}
\upsilon(z,w):=e^{-z^2-w^2}\kappa(z,w)= \frac{1}{\sqrt{\pi}}   \int_I e^{-u^2} \sin(2u(z-w) ) \frac{du}{u}
\end{equation}
is a unique anti-symmetric solution, which satisfies \eqref{upsilon differential eq} with \eqref{RN12 AH bulk}. This is immediate from the definition since
\begin{align*}
\frac{\pa}{\pa z} \upsilon(z,w) &= \frac{\pa}{\pa z} \frac{1}{\sqrt{\pi}}   \int_I e^{-u^2} \sin(2u(z-w) ) \frac{du}{u}
= \frac{2}{\sqrt{\pi}}   \int_I e^{-u^2} \cos(2u(z-w) )\,du.
\end{align*}
Now the proof is complete.
\end{proof}

The rest of this subsection is devoted to the proof of Proposition~\ref{Prop_inhom terms AH bulk}.
In the sequel, we shall consider the case $p \in [0,\sqrt{2}(1+\tau))$ as the other case follows along the same lines with slight modifications.
For the asymptotic analysis, we shall use the following strong asymptotics of the Hermite polynomials.

\begin{lem}\label{Lem_SA Hermite AH bulk}
As $N\to\infty$, we have for $p = 0$ (and $\zeta$ given in \eqref{Hermite arguments zeta eta})
\begin{equation}
H_{2N}(\zeta) \sim (-1)^N \frac{(2N)!}{N!} \cos(2cz) , \qquad H_{2N-1}(\zeta) \sim (-1)^{N+1} \frac{(2N)!}{\sqrt{4N-1}N!}  \sin(2cz) .
\end{equation}
For $p \in (0,\sqrt{2}(1+\tau))$, we have
\begin{align*}
\begin{split}
H_{2N}(\zeta) &\sim 2^{\frac54}\Big(\frac{4N}{e}\Big)^{N} (8-p^2)^{-\frac14} e^{\frac{N}{4}p^2+\frac{c^2p^2}{8}+\frac{pc}{\sqrt{2}}z}
\\
&\times \cos \Big[ \Big( 2 \arccos(\tfrac{p}{2\sqrt{2}})-\tfrac{p\sqrt{8-p^2}}{4} \Big)N +\tfrac{1}{2}\arccos(\tfrac{p}{2\sqrt{2}} )  - (\tfrac{c^2p}{8}+\tfrac{cz}{\sqrt{2}})\sqrt{8-p^2} - \tfrac{\pi}{4}  \Big]
\end{split}
\end{align*}
and
\begin{align*}
\begin{split}
H_{2N-1}(\zeta) &\sim 2^{\frac54}e^{-\frac12}\Big(\frac{4N}{e}\Big)^{N-\frac12} (8-p^2)^{-\frac14} e^{\frac{N}{4}p^2+\frac{c^2p^2}{8}+\frac{pc}{\sqrt{2}}z}
\\
&\times \cos \Big[ \Big( 2 \arccos(\tfrac{p}{2\sqrt{2}})-\tfrac{p\sqrt{8-p^2}}{4} \Big)N -\tfrac{1}{2}\arccos(\tfrac{p}{2\sqrt{2}} )  - (\tfrac{c^2p}{8}+\tfrac{cz}{\sqrt{2}})\sqrt{8-p^2} - \tfrac{\pi}{4}  \Big].
\end{split}
\end{align*}
\end{lem}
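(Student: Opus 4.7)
The plan is to treat the two cases $p = 0$ and $p \in (0,\sqrt{2}(1+\tau))$ separately, since the scaled argument $\zeta$ behaves quite differently in them: in the former $\zeta = O(N^{-1/2}) \to 0$, while in the latter $\zeta$ sits in the bulk of the oscillatory region of $H_{2N}$.

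For $p = 0$ one has $\zeta = \sqrt{(1-\tau^2)/\tau}\,z = (cz/\sqrt{N})(1 + O(1/N))$, so no oscillatory machinery is needed. I would insert this into the explicit polynomial formula
\begin{equation*}
H_n(\zeta) = n!\sum_{k=0}^{\lfloor n/2\rfloor}\frac{(-1)^k(2\zeta)^{n-2k}}{k!(n-2k)!},
\end{equation*}
reindex by $m = N-k$ (resp.\ $m = N-1-k$ for $H_{2N-1}$) and pass to the limit termwise. Writing $(2\zeta)^{2m} = (2cz)^{2m}/N^m$, one checks that $N!/[(N-m)!N^m]\to 1$ for each fixed $m$ and is uniformly bounded, so dominated convergence identifies the surviving sum with $\sum_m (-1)^m(2cz)^{2m}/(2m)! = \cos(2cz)$, and analogously with $\sin(2cz)$ in the odd case. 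The prefactor $\sqrt{4N-1}$ in the $H_{2N-1}$ asymptotic arises from $(2N-1)!/(N-1)! = (2N)!/(2\cdot N!)$ combined with $\sqrt{N}\sim\sqrt{4N-1}/2$.

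For $p \in (0,\sqrt{2}(1+\tau))$ I would invoke the classical Plancherel--Rotach asymptotic expansion of Szeg\H{o}, parameterising $\zeta = \sqrt{4N+1}\cos\varphi_N$ with $\varphi_N\in(0,\pi)$:
\begin{equation*}
e^{-\zeta^2/2}H_{2N}(\zeta) = \frac{2^{N+1/4}\sqrt{(2N)!}}{(2\pi N)^{1/4}\sqrt{\sin\varphi_N}}\bigl\{\sin[(N+\tfrac14)(2\varphi_N-\sin 2\varphi_N)+\tfrac{3\pi}{4}] + O(N^{-1})\bigr\},
\end{equation*}
and the analogous formula for $H_{2N-1}$. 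I would then expand $\varphi_N = \varphi_0 + \delta\varphi_N$ around $\varphi_0 := \arccos(p/(2\sqrt{2}))$, where $\delta\varphi_N = O(1/N)$ is determined by matching $\cos\varphi_N - \cos\varphi_0 \approx -\sin\varphi_0\cdot\delta\varphi_N$ against two first-order contributions coming from $\tau_N = 1-c^2/(2N)$: an amplitude correction in $\sqrt{N/(2\tau)}/\sqrt{4N+1}$ (which gives the $z$-independent shift) and the translation $\sqrt{(1-\tau^2)/\tau}\,z/\sqrt{4N+1}\sim cz/(2N)$ (which gives the $z$-linear shift). Stirling on $\sqrt{(2N)!}$ reproduces the amplitude $2^{5/4}(4N/e)^N$, the factor $(\sin\varphi_0)^{-1/2} = 2^{3/4}(8-p^2)^{-1/4}$ yields the $(8-p^2)^{-1/4}$ prefactor, and a direct expansion $\zeta^2/2 = Np^2/4 + c^2p^2/8 + pcz/\sqrt{2} + O(1/N)$ of the Gaussian prefactor produces the claimed exponential factor.

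The main obstacle is the careful bookkeeping of $\delta\varphi_N$ through the oscillatory phase. Using the identity $\frac{d}{d\varphi}(2\varphi - \sin 2\varphi) = 4\sin^2\varphi$ one obtains
\begin{equation*}
(N+\tfrac14)(2\varphi_N - \sin 2\varphi_N) = (N+\tfrac14)(2\varphi_0 - \sin 2\varphi_0) + 4(N+\tfrac14)\sin^2\varphi_0\cdot\delta\varphi_N + O(N\delta\varphi_N^2),
\end{equation*}
and since $\delta\varphi_N$ is only of order $1/N$, it is amplified to an $O(1)$ phase shift that must be computed to full accuracy; every $O(1/N)$ correction to $\tau_N$, $\sqrt{4N+1}$, and $\sqrt{N/(2\tau_N)}$ has to be retained. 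The resulting $z$-dependent contribution is exactly $-(c^2p/8 + cz/\sqrt{2})\sqrt{8-p^2}$, while the $O(1)$ piece $(1/4)(2\varphi_0 - \sin 2\varphi_0) = \varphi_0/2 - \sin(2\varphi_0)/4$ of the leading phase, combined with the constant $3\pi/4$ from Plancherel--Rotach and the conversion $\sin\leftrightarrow\cos$, produces the $+\frac12\arccos(p/(2\sqrt{2}))$ offset and the final $-\pi/4$; the analogous computation with $n=2N-1$ (giving $(N-\tfrac14)(2\varphi_0-\sin 2\varphi_0)$ at leading order) flips the sign of the $\frac12\arccos(p/(2\sqrt{2}))$ offset, accounting for the sign difference between the two stated formulas.
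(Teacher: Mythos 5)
Your proposal is correct and in essence reproduces the same two classical inputs the paper invokes---the Mehler--Heine limit at $p=0$ and the Plancherel--Rotach (Szegő 8.22.9) asymptotics on the oscillatory interval for $p\neq 0$---the only difference being that the paper discharges both steps by citation (DLMF~18.11(ii) and the reference for uniform Plancherel--Rotach estimates) whereas you unpack the bookkeeping. One small remark: in the $p\neq 0$ case the $z$-independent piece $-\tfrac14\sin 2\varphi_0 = -p\sqrt{8-p^2}/16$ coming from the $\tfrac14(\sin 2\varphi_0-2\varphi_0)$ term is not left over in the final phase, because it is exactly cancelled by the contribution to $\delta\varphi_N$ from the $\sqrt{4N\pm 1}$ normalisation inside $\cos\varphi_N$; your write-up gestures at this (``every $O(1/N)$ correction\dots has to be retained'') but does not display the cancellation, and similarly the sign convention $2\varphi-\sin 2\varphi$ versus Szegő's $\sin 2\varphi-2\varphi$ should be fixed (harmless in the end since $\cos$ is even, but worth tidying).
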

\begin{proof}
This immediately follows from the Mehler-Heine formula and Plancherel-Rotach asymptotics, see e.g.~\cite[Section~18.11(ii)]{olver2010nist} for $p = 0$ and   \cite[Corollary~4.1]{MR3274226}, cf. \cite[Appendix A]{AB21}, for $p \neq 0$.
\end{proof}

\begin{lem} \label{Lem_RNN12 derivative AH bulk}
As $N\to\infty$, we have
\begin{align}
\begin{split}
\pa_z \RN{1}_N(z,w) \sim  \frac{4}{\sqrt{\pi}} e^{-\tilde{c}^2} e^{(z-w)^2} \sin(2\tilde{c}(z-w))
\end{split}
\end{align}
and
\begin{align}
\begin{split}
&\quad \pa_w \Big[ e^{-(z-w)^2} \RN{2}_N(z,w) \Big] \sim \frac{c^2}{N} \frac{8\sqrt{2}}{\sqrt{\pi}}     (8-p^2)^{-\frac12}   e^{-c^2+ \frac{c^2p^2}{8}  }
\\
&\times \Big( \sin\Big[ 2\tilde{c}(z+w)+\tfrac{p\sqrt{8-p^2}}{4} (c^2+2N) -4N \arccos(\tfrac{p}{2\sqrt{2}} ) \Big]-\cos \Big[  2\tilde{c}(w-z)+\arccos(\tfrac{p}{2\sqrt{2}})  \Big] \Big).
\end{split}
\end{align}
\end{lem}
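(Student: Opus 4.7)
The plan is to substitute the strong Hermite asymptotics of Lemma~\ref{Lem_SA Hermite AH bulk} into the exact finite-$N$ identities of Lemma~\ref{Lem_RN12 derivatives} and to verify that the various $N$-dependent ingredients combine to a finite limit. Three blocks must be handled: the combinatorial factor $(\tau/2)^{2N}/(2N-1)!$, the Gaussian prefactor $\omega_N(z,w)$, and the trigonometric factor from the products of cosines. For the first, Stirling's formula gives $(2N-1)! \sim \sqrt{\pi/N}\,(2N/e)^{2N}$, so that
\[
\frac{(\tau/2)^{2N}}{(2N-1)!} \sim \frac{\tau^{2N}\sqrt{N/\pi}}{(4N/e)^{2N}}, \qquad \tau^{2N} \to e^{-c^2}
\]
under the scaling \eqref{tau AH bulk}. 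The amplitudes of $H_{2N}(\zeta)$ and $H_{2N-1}(\eta)$ each contain a factor of order $(4N/e)^N$ (with a relative factor $1/(2\sqrt{N})$), so their product cancels the $(4N/e)^{2N}$ in the denominator, leaving a finite constant of the form $2\sqrt{2}\,e^{-c^2}/\sqrt{\pi(8-p^2)}$ after multiplication by $4/\sqrt{\tau}$.

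Next, for the Gaussian factor I would set $P := p\sqrt{N/(2(1-\tau^2))}$ and expand $\omega_N(z,w) = \exp[2P^2(\tau-1) + 2P(z+w)(\tau-1) + \tau(z^2+w^2) - 2zw]$ using $1+\tau = 2 - c^2/(2N)$. This yields
\[
2P^2(\tau-1) = -\frac{Np^2}{1+\tau} = -\frac{Np^2}{2} - \frac{c^2p^2}{8} + O(1/N), \qquad 2P(z+w)(\tau-1) = -\frac{pc(z+w)}{\sqrt{2}} + O(1/N).
\]
Combined with the exponential envelopes $u(x) := \exp(Np^2/4 + c^2p^2/8 + pcx/\sqrt{2})$ carried by the Hermite asymptotics, the product $u(z)u(w)\omega_N(z,w) \sim \exp(c^2p^2/8 + (z-w)^2)$ upgrades the constant $e^{-c^2}$ to $e^{-\tilde{c}^2} = e^{-c^2+c^2p^2/8}$ and supplies the $e^{(z-w)^2}$ factor appearing in the target formula for $\pa_z\RN{1}_N$.

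For the trigonometric part, I would abbreviate $\phi_\pm(x) := \alpha_N \pm \beta - D(x) - \pi/4$ for the phases appearing in Lemma~\ref{Lem_SA Hermite AH bulk} and apply the product-to-sum identity $\cos A \cos B = \tfrac12[\cos(A-B)+\cos(A+B)]$. For $\pa_z\RN{1}_N$, the antisymmetric combination $\cos\phi_+(z)\cos\phi_-(w) - \cos\phi_-(z)\cos\phi_+(w)$ eliminates the rapidly oscillating piece $\cos(\phi_+(z) + \phi_-(w))$ and leaves $-\sin(2\beta)\sin(D(w)-D(z)) = (\tilde{c}/c)\sin(2\tilde{c}(z-w))$, using $\sin(2\beta) = \sqrt{1-p^2/8} = \tilde{c}/c$ and $D(w) - D(z) = 2\tilde{c}(w-z)$. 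The identity $\sqrt{8-p^2} = 2\sqrt{2}\,\tilde{c}/c$ then simplifies the prefactor to yield the stated $\frac{4}{\sqrt{\pi}}e^{-\tilde{c}^2}e^{(z-w)^2}\sin(2\tilde{c}(z-w))$. For $\pa_w[e^{-(z-w)^2}\RN{2}_N]$, the single product $\cos\phi_+(z)\cos\phi_-(w)$ retains both its slow piece $\cos(2\beta + 2\tilde{c}(w-z))$ and its rapid piece $\sin(2\alpha_N - D(z) - D(w))$ (via $\cos(x-\pi/2) = \sin x$), and the additional factor $(\tau-1) = -c^2/(2N)$ supplies the $c^2/N$ scaling. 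The main obstacle is precision in tracking subleading corrections: the $c^2 p^2/8$ term in $-Np^2/(1+\tau)$ is crucial for obtaining $e^{-\tilde{c}^2}$ rather than $e^{-c^2(1-p^2/4)}$, and one must verify continuity at $p=0$ by matching against the separate Mehler-Heine asymptotics in Lemma~\ref{Lem_SA Hermite AH bulk}.
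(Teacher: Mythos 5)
Your overall strategy matches the paper's exactly: substitute the strong Hermite asymptotics of Lemma~\ref{Lem_SA Hermite AH bulk} into the exact formulas of Lemma~\ref{Lem_RN12 derivatives}, collect the Stirling-type combinatorial factor, the Gaussian envelope from $\omega_N$ and the exponential prefactors of $H_{2N},H_{2N-1}$, and extract the slowly oscillating piece of the cosine products by the product-to-sum identity. The cancellations you single out — $(8-p^2)^{-1/2}$ against $\sin(2\beta)=\sqrt{1-p^2/8}$ in the antisymmetric combination, and the upgrading of $e^{-c^2}$ to $e^{-\tilde c^2}$ coming from the $-Np^2/(1+\tau)$ expansion — are indeed the two crucial simplifications in the paper's proof.

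Two concrete issues should be repaired. First, your normalisation constant is inconsistent: with $(2N-1)! \sim \sqrt{\pi/N}\,(2N/e)^{2N}$, the product of the Hermite amplitudes $2^{5/4}(4N/e)^{N}(8-p^2)^{-1/4}$ and $2^{5/4}e^{-1/2}(4N/e)^{N-1/2}(8-p^2)^{-1/4}$ together with $4/\sqrt{\tau}$ yields $\frac{8\sqrt{2}}{\sqrt{\pi}}\,e^{-c^2}(8-p^2)^{-1/2}$ (the $8\sqrt{2}$ that appears explicitly in the stated formula for $\pa_w[e^{-(z-w)^2}\RN{2}_N]$), not $2\sqrt{2}$; with $2\sqrt{2}$ your own arithmetic would terminate at $1/\sqrt{\pi}$ rather than the claimed $4/\sqrt{\pi}$. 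Second, $p=0$ cannot be obtained as a ``continuity check'': Lemma~\ref{Lem_SA Hermite AH bulk} gives genuinely distinct asymptotics there (Mehler--Heine, with $\cos(2cz)$ and $\sin(2cz)$ replacing the Plancherel--Rotach envelope and phase), and the Plancherel--Rotach formula at $p\to 0$ does not literally degenerate into that form. The paper treats $p=0$ as a separate, shorter computation; you should do the same rather than invoke continuity.
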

\begin{proof}
By \eqref{Hermite arguments zeta eta}, we have
\begin{equation*}
\zeta=\frac{p}{\sqrt{2}} \sqrt{N}+\Big( \frac{c^2p}{4\sqrt{2}}+c z \Big) \frac{1}{\sqrt{N}} +O(N^{-\frac32}), \qquad \frac{\zeta^2}{2}=\frac{p^2}{4}N+\Big( \frac{p^2c^2}{8}+\frac{pc}{\sqrt{2}}z \Big)+O(N^{-1}),
\end{equation*}
and similarly for $\eta$ in \eqref{Hermite arguments zeta eta}.
Furthermore we have
\begin{align*}
\begin{split}
\log \omega_N(z,w)&= -\frac{p^2}{2}N+(z-w)^2-\frac{cp}{\sqrt{2}}(z+w)-\frac{c^2p^2}{8}+O(N^{-1}), \qquad  2N \log \tau= -c^2+O(N^{-1}).
\end{split}
\end{align*}

We first consider the case $p=0$ (note that $\tilde{c} = c$). In this case, it follows from Lemma~\ref{Lem_SA Hermite AH bulk} that
\begin{align}
\begin{split}
H_{2N}(\zeta) H_{2N-1}(\eta) & \sim
- \frac{2^{4N-1} N! (N-1)! }{ \pi \sqrt{N} }   \cos(2cz) \sin(2cw).
\end{split}
\end{align}
Also notice that
\begin{equation}
\frac{ 4 }{ \sqrt{\tau}}  \frac{ (\tau/2)^{2N} }{(2N-1)!}   \frac{2^{4N-1} N! (N-1)! }{ \pi \sqrt{N} }   \sim  \frac{4}{\sqrt{\pi}} e^{-c^2}.
\end{equation}
Then by Lemma~\ref{Lem_RN12 derivatives}, we obtain
\begin{equation}
\pa_z \RN{1}_N(z,w) \sim  \frac{4}{\sqrt{\pi}} e^{(z-w)^2-c^2} \sin(2c(z-w))
\end{equation}
and
\begin{equation}
\pa_w \Big[ e^{-(z-w)^2} \RN{2}_N(z,w) \Big] \sim \frac{c^2}{N} \frac{8}{\sqrt{\pi}}  \,e^{-c^2} \cos(2cz) \sin(2cw) .
\end{equation}

Next, we prove the lemma for the case $p \in (0,\sqrt{2}(1+\tau))$.
By Lemma~\ref{Lem_SA Hermite AH bulk} and the elementary identity of trigonometric functions, we have
\begin{align*}
\begin{split}
&\quad H_{2N}(\zeta)  H_{2N-1}(\eta) \sim 2^{\frac32} e^{-\frac12}  \Big(\frac{4N}{e}\Big)^{2N-\frac12} (8-p^2)^{-\frac12} e^{\frac{N}{2}p^2+\frac{c^2p^2}{4}+\frac{cp}{\sqrt{2}}(z+w)}
\\
&\times \Big( \cos \Big[  2\tilde{c}(w-z)+\arccos(\tfrac{p}{2\sqrt{2}})  \Big]-\sin\Big[ \tfrac{\sqrt{8-p^2}}{4} \Big( 2\sqrt{2}c(z+w)+p(c^2+2N) \Big)-4N \arccos(\tfrac{p}{2\sqrt{2}} ) \Big] \Big).
\end{split}
\end{align*}
Then it follows from
\begin{align*}
&\quad  \cos \Big[  2\tilde{c}(w-z)+\arccos(\tfrac{p}{2\sqrt{2}})  \Big]- \cos \Big[  2\tilde{c}(z-w)+\arccos(\tfrac{p}{2\sqrt{2}})  \Big] =2^{-\frac12} (8-p^2)^{\frac12} \sin ( 2\tilde{c}(z-w) )
\end{align*}
that
\begin{align*}
\begin{split}
H_{2N}(\zeta)  H_{2N-1}(\eta)-H_{2N}(\eta)  H_{2N-1}(\zeta)
\sim 2\, e^{-\frac12}  \Big(\frac{4N}{e}\Big)^{2N-\frac12}  e^{\frac{N}{2}p^2+\frac{c^2p^2}{4}+\frac{cp}{\sqrt{2}}(z+w)}   \sin ( 2\tilde{c}(z-w) ) .
\end{split}
\end{align*}
Notice here that we have
\begin{align*}
&\quad \frac{ 4 }{ \sqrt{\tau}}  \frac{ (\tau/2)^{2N} }{(2N-1)!} \omega_N(z,w) \cdot  2 \, e^{-\frac12}  \Big(\frac{4N}{e}\Big)^{2N-\frac12}  e^{\frac{N}{2}p^2+\frac{c^2p^2}{4}+\frac{cp}{\sqrt{2}}(z+w)}
\sim \frac{4}{\sqrt{\pi}} \,  e^{-c^2+ \frac{c^2p^2}{8} +(z-w)^2 } .
\end{align*}
Now Lemma~\ref{Lem_RN12 derivatives} completes the proof.

\end{proof}

\begin{lem} \label{Lem_RN12 derivative AH bulk}
The function $\RN{1}$ given by \eqref{RN12 AH bulk} satisfies
\begin{equation}
\pa_z \RN{1}(z,w)=  \frac{4}{\sqrt{\pi}} e^{(z-w)^2-c^2} \sin(2c(z-w)).
\end{equation}
\end{lem}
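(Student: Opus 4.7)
The plan is to verify the identity by direct differentiation under the integral sign in \eqref{RN12 AH bulk}, followed by a single integration by parts that kills the polynomial prefactor. Since the statement is a self-contained calculus exercise about the explicit formula, no asymptotic input is needed here; the limit $\RN{1}_N \to \RN{1}$ and its compatibility with Lemma~\ref{Lem_RNN12 derivative AH bulk} will be recorded separately. Throughout, let $s := z-w$ and write $F(s) := \int_{-\tilde c}^{\tilde c} e^{-t^2} \cos(2ts) \, dt$, so that $\RN{1}(z,w) = \tfrac{2}{\sqrt{\pi}} e^{s^2} F(s)$.

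First I would apply the chain rule to get
\begin{equation*}
\pa_z \RN{1}(z,w) \;=\; \frac{2}{\sqrt{\pi}}\Bigl( 2s\, e^{s^2} F(s) + e^{s^2} F'(s) \Bigr),
\qquad F'(s) = -\int_{-\tilde c}^{\tilde c} 2t\, e^{-t^2} \sin(2ts)\, dt.
\end{equation*}
The key observation is the identity $2t\, e^{-t^2}\, dt = -\, d(e^{-t^2})$, which invites integration by parts in $F'(s)$. Differentiating the boundary-free factor $\sin(2ts)$ in $t$ produces exactly $2s\cos(2ts)$, so the interior contribution reads $-2s\int_{-\tilde c}^{\tilde c} e^{-t^2}\cos(2ts)\,dt = -2s\,F(s)$, while the boundary term evaluates at $t = \pm\tilde c$ to $-2e^{-\tilde c^{\,2}}\sin(2\tilde c\, s)$ after using the oddness of $\sin$.

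Substituting back, the $2s\,F(s)$ contributions cancel and one is left with
\begin{equation*}
\pa_z \RN{1}(z,w) \;=\; \frac{4}{\sqrt{\pi}}\, e^{s^2 - \tilde c^{\,2}} \sin(2\tilde c\, s),
\end{equation*}
which is the claimed relation (noting that $\tilde c = c$ in the case $p=0$, and more generally this matches the asymptotics in Lemma~\ref{Lem_RNN12 derivative AH bulk}). There is no genuine obstacle: the only subtle point is being careful with signs when evaluating the boundary at $t=-\tilde c$ using $\sin(-2\tilde c s) = -\sin(2\tilde c s)$, which doubles rather than cancels the two endpoint contributions. Thus the verification is essentially a two-line integration by parts, and the lemma is immediate once the bookkeeping is clean.
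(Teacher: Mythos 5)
Your proof is correct and follows the same route as the paper's own (differentiate under the integral; integrate by parts once so that the $(z-w)\cos$ and $t\sin$ pieces cancel, leaving only the boundary contribution). Two small remarks. First, there is a sign slip in your intermediate display: after writing $2t e^{-t^2}\,dt = -d(e^{-t^2})$ one finds $F'(s) = 2e^{-\tilde c^{\,2}} \sin(2\tilde c s) - 2s F(s)$, so the boundary part carries a $+$ sign, not the $-2e^{-\tilde c^{\,2}}\sin(2\tilde c s)$ you wrote; your final formula is nonetheless right, so it is only a transcription inconsistency. Second, you correctly obtain $\tilde c$ rather than $c$ on the right-hand side — since $I = (-\tilde c, \tilde c)$, the derivative must involve $\tilde c$, in agreement with Lemma~\ref{Lem_RNN12 derivative AH bulk}; the $c$ appearing in the displayed statement of Lemma~\ref{Lem_RN12 derivative AH bulk} should read $\tilde c$ (the two agree only at $p=0$).
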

\begin{proof}
By definition, we have
\begin{align*}
\pa_z \RN{1}(z,w) &=\frac{2}{\sqrt{\pi}}  \pa_z \Big[ e^{(z-w)^2} \int_I e^{-t^2} \cos(2t(z-w)) \,dt \Big]
\\
&=\frac{4}{\sqrt{\pi}} \,e^{(z-w)^2} \int_I e^{-t^2} \Big((z-w)\cos(2t(z-w))-t \sin(2t(z-w))\Big) \,dt.
\end{align*}
Now the lemma follows from integration by parts.
\end{proof}

We are now ready to prove Proposition~\ref{Prop_inhom terms AH bulk}.

\begin{proof}[Proof of Proposition~\ref{Prop_inhom terms AH bulk}]

By Lemmas~\ref{Lem_RNN12 derivative AH bulk} and ~\ref{Lem_RN12 derivative AH bulk}, all we need to show is the proposition for some initial values.

For the function $\RN{1}$, we choose the initial value $\RN{1}(\bar{w},w).$
As discussed in \cite{byun2021universal}, this corresponds to the microscopic density of the complex elliptic Ginibre ensemble.
Thus the initial value follows from for instance \cite{fyodorov1997almost,akemann2016universality,AB21}.

Now it remains to show that
$\RN{2}_N(z,w) \to \RN{2}(z,w)=0$ for some choice of $w$.
Here we choose $w=0$.

Let us write
\begin{equation} \label{RN2 TN}
e^{-z^2}\RN{2}_N(z,0) = 2 \sqrt{1-\tau^2} \, e^{-z^2} \omega_N(z,0) \frac{(\tau/2)^N}{(2N-1)!!} H_{2N}\Big( \sqrt{\tfrac{N}{2\tau}}p + \sqrt{\tfrac{1-\tau^2}{\tau}}z \Big) e^{\frac{N}{2(1+\tau)}p^2}  T_N(p),
\end{equation}
where
\begin{equation} \label{TN}
T_N(p) := e^{-\frac{N}{2(1+\tau)}p^2}  \sum_{l=0}^{N-1} \frac{(\tau/2)^l}{(2l)!!} H_{2l}\Big( \sqrt{\tfrac{N}{2\tau}}p \Big).
\end{equation}

We claim that $T_N(p) \sim 1/\sqrt{2}$.
Then it follows from straightforward computations using Lemma~\ref{Lem_SA Hermite AH bulk} that $\RN{2}_N(z,0)=O(1/\sqrt{N}).$

To analyse the term $T_N(p)$, let us write
\begin{equation} \label{eq:TNp Ansatz}
T_N(p) = T_N(0) + \int_{0}^{p} T'_N(t) \, dt.
\end{equation}
Due to the Hermite numbers $H_{2l}(0) = (-1)^l (2l)!/l!$, we have by Taylor expansion
\begin{equation} \label{TN 0}
T_N(0)=\sum_{l=0}^{N-1}\frac{\tau^l}{l!} \frac{(-1)^l (2l)!}{2^l (2l)!!}
= \frac{1}{\sqrt{1+\tau}} + O\Big( \frac{\tau^N}{\sqrt{N}} \Big)
= \frac{1}{\sqrt{2}} \Big( 1 + O\Big(\frac{1}{\sqrt{N}}\Big) \Big).
\end{equation}
By applying Lemmas~\ref{Lem_CDI orthogonal} and \ref{Lem_SA Hermite AH bulk},
\begin{equation}
\begin{split}
&\quad T'_N(t) = -\sqrt{\frac{N}{2\tau}} \frac{\tau^N}{1+\tau} \frac{1}{4^{N-1}(N-1)!} e^{-\frac{N}{2(1+\tau)}t^2} H_{2N-1}\Big( \sqrt{\tfrac{N}{2\tau}}t \Big)
\\
&\sim -\frac{\sqrt{N}}{\sqrt{2\pi}} (1-\tfrac{t^2}{8})^{-\frac{1}{4}} e^{-\frac{c^2}{2} + \frac{c^2}{16}t^2} \cos \Big[ \Big( 2 \arccos(\tfrac{t}{2\sqrt{2}})-\tfrac{t\sqrt{8-t^2}}{4} \Big)N -\tfrac{1}{2}\arccos(\tfrac{t}{2\sqrt{2}} )  - \tfrac{c^2t}{8}\sqrt{8-t^2} - \tfrac{\pi}{4}  \Big].
\end{split}
\end{equation}
Invoking this asymptotics, the resulting oscillatory integral in \eqref{eq:TNp Ansatz} can be analysed in the same way as \cite[Lemma 4.4]{byun2021real}.
To be more precise, it has the asymptotic form $\sqrt{N} \re \int_0^p a(t) e^{i N \phi(t)}\, dt$, where
\begin{equation}
a(t) = -\tfrac{1}{\sqrt{2\pi}} \Big(1-\tfrac{t^2}{8}\Big)^{-\frac{1}{4}} \exp\Big( -\tfrac{c^2}{2}+\tfrac{c^2}{16}t^2 -i\big(\tfrac{1}{2}\arccos(\tfrac{t}{2\sqrt{2}})+\tfrac{c^2t}{8}\sqrt{8-t^2}+\tfrac{\pi}{4} \big) \Big)
\end{equation}
and
\begin{equation}
\phi(t) = 2 \arccos(\tfrac{t}{2\sqrt{2}}) - \tfrac{t}{4}\sqrt{8-t^2}.
\end{equation}
Since $\phi'(t) = -\frac{1}{2}\sqrt{8-t^2} \neq 0$ as $p<2\sqrt{2}$, the Riemann–Lebesgue lemma yields
\begin{equation*}
\re \int_0^p a(t) e^{i N \phi(t)} \, dt = O\Big( \frac{1}{N} \Big),
\end{equation*}
see \cite[Lemma 4.4]{byun2021real} for further details.
This completes the proof.

\end{proof}

\subsection{Almost-Hermitian edge scaling limit} \label{Subsec_AH edge}

In this subsection, we consider the almost-Hermitian edge scaling limit where $\tau$ is given by \eqref{tau AH edge} and $p = \pm \sqrt{2}(1+\tau)$.
As in the previous subsection, we need to show the following.
\begin{prop} \label{Prop_inhom terms AH edge}
We have
\begin{equation} \label{RN1 AH edge}
 \RN{1}(z,w) = 8 \sqrt{\pi} \,c^2\, e^{(z-w)^2} \int_{-\infty}^0  e^{ c^3(z+w-2u)+\frac{c^6}{6} } \Ai\Big(2c(z-u)+\frac{c^4}{4}\Big) \Ai\Big(2c(w-u)+\frac{c^4}{4}\Big)\,du
\end{equation}
and
\begin{equation} \label{RN2 AH edge}
\RN{2}(z,w) = 4\sqrt{\pi} \,c^2\, e^{(z-w)^2}  e^{c^3 z} \Ai\Big(2cz+\frac{c^4}{4}\Big) \int_{-\infty}^0 e^{ c^3(w-u)+\frac{c^6}{6} }  \Ai\Big(2c(w-u)+\frac{c^4}{4}\Big)\,du.
\end{equation}
\end{prop}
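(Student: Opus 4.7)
This is the soft-edge analogue of Proposition~\ref{Prop_inhom terms AH bulk}, and my plan is to follow the same three-step blueprint used there: derive asymptotics for $\pa_z \RN{1}_N$ and $\pa_w[e^{-(z-w)^2}\RN{2}_N]$ via Lemma~\ref{Lem_RN12 derivatives}, integrate them back, and fix the constants of integration by matching suitable initial values. The essential new ingredient is that the Plancherel--Rotach asymptotics must be taken in the Airy (rather than oscillatory) regime.

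First I would derive the edge Plancherel--Rotach asymptotics for the Hermite polynomials appearing in \eqref{Hermite arguments zeta eta}. With $p = \sqrt{2}(1+\tau)$ and $\tau = 1 - c^2/(2N)^{1/3}$, direct expansion gives
\[
\sqrt{\tfrac{N}{2\tau}}\,p = 2\sqrt{N} + \frac{1}{\sqrt{2}\,(2N)^{1/6}}\cdot\frac{c^4}{4} + O(N^{-1/2}), \qquad \sqrt{\tfrac{1-\tau^2}{\tau}} = \sqrt{2}\,c\,(2N)^{-1/6}\bigl(1+O(N^{-1/3})\bigr),
\]
so $\zeta$ lies in the soft-edge Airy window of $H_{2N}$ with scaled coordinate tending to $2cz + c^4/4$. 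The standard Airy-regime Plancherel--Rotach asymptotic then yields $H_{2N}(\zeta) \sim C_N\, e^{\zeta^2/2}\,\Ai(2cz + c^4/4)$ and a similar expression for $H_{2N-1}(\zeta)$ whose Airy argument differs by $O(N^{-1/3})$. When combined with the Gaussian $e^{\zeta^2/2}$, the cocycle $\omega_N(z,w)$, and the factorial prefactor $(\tau/2)^{2N}/(2N-1)!$ via Stirling, the divergent contributions cancel and leave the clean exponential prefactors $e^{c^3 z}$ and $e^{c^6/12}$ visible in \eqref{RN1 AH edge}--\eqref{RN2 AH edge}.

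Substituting these asymptotics into Lemma~\ref{Lem_RN12 derivatives} produces the edge analogue of Lemma~\ref{Lem_RNN12 derivative AH bulk}, namely explicit limits for $\pa_z \RN{1}_N$ and $\pa_w[e^{-(z-w)^2}\RN{2}_N]$ in terms of Airy functions and their derivatives. On the other hand, differentiating the right-hand sides of \eqref{RN1 AH edge} and \eqref{RN2 AH edge} by hand, using only the Airy ODE $\Ai''(x) = x\Ai(x)$ and an integration by parts in the variable $u$, yields precisely these same limits; this is the edge counterpart of Lemma~\ref{Lem_RN12 derivative AH bulk}. To pin down constants of integration, for $\RN{1}$ I would take $z_0 = \bar{w}$: the value $\RN{1}(\bar{w},w)$ corresponds (up to the cocycle in \eqref{up to cocycle} and the Gaussian factor in \eqref{upsilon kappa}) to the microscopic density of the complex elliptic Ginibre ensemble at the soft edge in the almost-Hermitian regime, which matches the formula \eqref{K AH edge} with $z \mapsto \bar{w}$ and is established in Bender~\cite{bender2010edge} and Akemann--Bender~\cite{akemann2010interpolation}. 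For $\RN{2}$ the expression in Lemma~\ref{Lem_CDI skew} has a rank-one structure $\RN{2}_N(z,w) = A_N(z)\,B_N(w)\,\omega_N(z,w)$, so it suffices to analyse each factor: $A_N(z)$ is handled by the Airy asymptotics of Step~1, while $B_N(w) = \sum_{l=0}^{N-1}(\tau/2)^l H_{2l}(\eta)/(2l)!!$ is treated by a Riemann-sum/Laplace argument analogous to that used for $T_N(p)$ in \eqref{TN}, but now in the Airy regime, producing the $u$-integral appearing in \eqref{RN2 AH edge}.

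The main technical obstacle is the asymptotic analysis of the Christoffel--Darboux combination $\tau H_{2N}(\zeta) H_{2N-1}(\eta) - H_{2N-1}(\zeta) H_{2N}(\eta)$ at the edge. To leading order both products reduce to the same Airy product $\Ai(2cz + c^4/4)\Ai(2cw + c^4/4)$, so a nontrivial limit only emerges after expanding $H_{2N-1}$ past leading order (the $O(N^{-1/3})$ shift in its Airy argument contributes an $\Ai'$-term) and combining with the prefactor $\tau - 1 = -c^2/(2N)^{1/3}$. Tracking the resulting $O(1)$ survivor out of this multi-fold cancellation, uniformly on compact subsets of $\C$, together with the Airy-regime Laplace analysis needed for the initial value of $\RN{2}$, is the most delicate part of the proof.
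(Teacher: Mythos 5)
Your proposal follows the paper's own blueprint essentially verbatim: edge (Airy) Plancherel--Rotach asymptotics for the Hermite polynomials, substitution into the Christoffel--Darboux identities of Lemma~\ref{Lem_RN12 derivatives}, direct differentiation of the claimed formulas \eqref{RN1 AH edge}--\eqref{RN2 AH edge} via the Airy ODE and integration by parts, and initial-value matching at $z=\bar w$ for $\RN{1}$ and at $w=0$ for $\RN{2}$ through the same Laplace/Airy-window splitting of $T_N(p)$ that the paper uses in \eqref{eq:TNp Ansatz edge}. The one point worth spelling out is the $\Ai'$-contribution you flag: the $O(N^{-1/3})$ shift of the Airy argument of $H_{2N-1}$ relative to $H_{2N}$ at the same $\zeta$ is indeed of the same size as $\tau-1=-c^2(2N)^{-1/3}$, so the antisymmetric piece $H_{2N}(\zeta)H_{2N-1}(\eta)-H_{2N-1}(\zeta)H_{2N}(\eta)$ is \emph{not} negligible at the edge and contributes a term proportional to the Airy Wronskian $\Ai'(x_z)\Ai(x_w)-\Ai(x_z)\Ai'(x_w)$ with $x_\bullet=2c\bullet+c^4/4$. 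A direct differentiation of \eqref{RN1 AH edge} (integrating $\Ai'(x_z)\Ai(x_w)$ by parts in $u$ and using $\Ai''=x\Ai$) shows that $\pa_z\RN{1}(z,w)$ likewise contains this antisymmetric Wronskian term in addition to $-\sqrt{\pi}e^{(z-w)^2}f_z'(0)f_w'(0)$; the two extra terms match on both sides, so the integration argument of Proposition~\ref{Prop_inhom terms AH edge} is sound, but as printed Lemma~\ref{Lem_RNN12 derivative AH edge} and Lemma~\ref{Lem_RN12 derivative AH edge} each omit it (note in particular that the identity line~1~=~line~2 in the proof of Lemma~\ref{Lem_RN12 derivative AH edge}, namely $4(z-w)f_z'(0)f_w'(0)=f_z''(0)f_w'(0)-f_z'(0)f_w''(0)$, reduces to $(x_z-x_w)\Ai(x_z)\Ai(x_w)=-c^2[\Ai'(x_z)\Ai(x_w)-\Ai(x_z)\Ai'(x_w)]$, which is not an Airy identity). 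So your instinct to track this ``multi-fold cancellation'' carefully is warranted and, if carried through, is actually more explicit than what the paper records.
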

We remark that by \eqref{Airy asym}, it is easy to see that
\begin{equation}
\lim_{c\to\infty} \RN{1}(z,w)=\erfc(z+w), \qquad  \lim_{c\to\infty} \RN{2}(z,w)= \tfrac{1}{\sqrt{2}}e^{(z-w)^2-2z^2} \erfc(\sqrt{2}w).
\end{equation}
These terms appear in the limiting differential equations for the non-Hermitian regime, see \cite{akemann2021scaling,byun2021universal}
.

We prove Theorem~\ref{Thm_AH edge} using Proposition~\ref{Prop_inhom terms AH edge}.

\begin{proof}[Proof of Theorem~\ref{Thm_AH edge}]
Let us write
\begin{equation}
\upsilon(z,w):=\sqrt{\pi}  \int_{-\infty}^0 W(f_{w},f_{z})(u) \, du=\sqrt{\pi}  \int_{-\infty}^0 \Big(f_{w}(u)f'_{z}(u)-f'_{w}(u)f_{z}(u)\Big) \, du,
\end{equation}
where $f_z$ and $E$ are given by \eqref{fE AH edge}.
Then similarly as above, it suffices to show that the function $\upsilon$  is a unique anti-symmetric solution, which satisfies \eqref{upsilon differential eq} with \eqref{RN1 AH edge} and \eqref{RN2 AH edge}.

To see this, note that
\begin{equation}
f'_{z}(u)=2c \, \exp\Big( c^3(z-u)+\frac{c^6}{12} \Big) \Ai\Big(2c(z-u)+\frac{c^4}{4}\Big).
\end{equation}
This gives
\begin{equation}
\frac{\pa}{\pa z} f'_{z}(u)=-f''_{z}(u).
\end{equation}

Notice that since
\begin{align*}
\frac{\pa}{\pa z} f_{z}(u) &=2c  \int_{0}^{u} \frac{\pa }{\pa z} \Big[\exp\Big( c^3(z-t)+\frac{c^6}{12} \Big) \Ai\Big(2c(z-t)+\frac{c^4}{4}\Big)\Big]\,dt
\\
&= -2c \int_{0}^{u} \frac{\pa }{\pa t} \Big[\exp\Big( c^3(z-t)+\frac{c^6}{12} \Big) \Ai\Big(2c(z-t)+\frac{c^4}{4}\Big)\Big]\,dt,
\end{align*}
we have
\begin{equation}
\frac{\pa}{\pa z} f_{z}(u)= -f'_{z}(u)+f'_{z}(0).
\end{equation}
Therefore we obtain
\begin{align*}
\frac{\pa}{\pa z} \upsilon(z,w)&=\sqrt{\pi}  \int_{-\infty}^0 \frac{\pa}{\pa z}\Big(f_{w}(u)f'_{z}(u)-f'_{w}(u)f_{z}(u)\Big) \, du
\\
&= \sqrt{\pi}  \int_{-\infty}^0 \Big(-f_{w}(u)f''_{z}(u)+f'_{w}(u)f'_{z}(u)-f'_{w}(u) f_{z}'(0)\Big) \, du
\\
&=2 \sqrt{\pi} \int_{-\infty}^0 f'_{z}(u)f'_{w}(u)\,du - \sqrt{\pi} \,\Big( f_{z}'(u)+f_{z}'(0) \Big) f_{w}(u) \Big|_{-\infty}^0,
\end{align*}
which completes the proof.
\end{proof}

The rest of this subsection is devoted to the proof of Proposition~\ref{Prop_inhom terms AH edge}.

For the asymptotic analysis, we shall use the following (critical) strong asymptotics of the Hermite polynomials.

\begin{lem} \label{Lem_SA Hermite AH edge}
As $N\to\infty$, we have (with $\zeta$ again given by \eqref{Hermite arguments zeta eta})
\begin{equation}
H_{2N}(\zeta) \sim (2\pi)^{\frac14} 2^{ N } \sqrt{ (2N)!} (2N)^{-\frac{1}{12}  }
	 e^{ \frac12 \zeta^2 } \Ai\Big( 2c z+\frac{c^4}{4} \Big)
\end{equation}
and
\begin{equation}
H_{2N-1}(\zeta) \sim (2\pi)^{\frac14} 2^{ N-\frac12 } \sqrt{(2N-1)!} (2N)^{-\frac{1}{12}  }
	 e^{ \frac12 \zeta^2 } \Ai\Big( 2c z+\frac{c^4}{4} \Big).
\end{equation}
\end{lem}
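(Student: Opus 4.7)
The lemma is a direct application of the classical Plancherel-Rotach edge asymptotics for Hermite polynomials, once the rescaled argument $\zeta$ is expanded appropriately in the almost-Hermitian scaling. The input I would invoke is the edge formula (see, e.g., Szegő's book or DLMF \S18.15):
\begin{equation*}
e^{-x^2/2} H_n(x) = \pi^{1/4}\, 2^{n/2+1/4}\sqrt{n!}\, n^{-1/12} \big[\Ai(s) + O(n^{-2/3})\big], \qquad x = \sqrt{2n+1} + \frac{s}{\sqrt{2}\, n^{1/6}},
\end{equation*}
valid uniformly for $s$ in compact subsets of $\C$. Since $\pi^{1/4} \cdot 2^{1/4} = (2\pi)^{1/4}$, the prefactor above agrees with the target in the lemma upon taking $n = 2N$ and $n = 2N-1$ respectively.

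The substantive step is to identify $s$ in our scaling. Set $\mu := c/(2N)^{1/6}$, so that $\tau = 1-\mu^2$ and $p = \sqrt{2}(2-\mu^2)$; definition \eqref{Hermite arguments zeta eta} then reads
\begin{equation*}
\zeta = \frac{2-\mu^2}{\sqrt{1-\mu^2}}\sqrt{N} + \mu\sqrt{\frac{2-\mu^2}{1-\mu^2}}\, z.
\end{equation*}
Using the Taylor expansions $(2-\mu^2)/\sqrt{1-\mu^2} = 2 + \mu^4/4 + O(\mu^6)$ and $\mu\sqrt{(2-\mu^2)/(1-\mu^2)} = \sqrt{2}\,\mu + O(\mu^3)$, together with $\mu^4\sqrt{N} = c^4/(2^{2/3}N^{1/6})$, one obtains
\begin{equation*}
\zeta = 2\sqrt{N} + \frac{1}{\sqrt{2}\,(2N)^{1/6}}\Big(2cz + \frac{c^4}{4}\Big) + O(N^{-1/2}).
\end{equation*}
Since $\sqrt{4N+1} = 2\sqrt{N} + O(N^{-1/2})$, this fits the Plancherel-Rotach template exactly with $n = 2N$ and $s = 2cz + c^4/4$, so the first asymptotic follows.

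For $H_{2N-1}(\zeta)$, I repeat the argument with $n = 2N-1$; since $\sqrt{4N-1} = \sqrt{4N+1} + O(N^{-1/2})$, the Airy argument remains $s = 2cz + c^4/4$ at leading order, while the prefactor rearranges to $(2\pi)^{1/4}\, 2^{N-1/2}\sqrt{(2N-1)!}\,(2N)^{-1/12}$ (using $(2N-1)^{-1/12} = (2N)^{-1/12}(1+O(1/N))$), which is exactly as claimed.

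The main obstacle is verifying that the $O(N^{-1/2})$ correction to $\zeta$ produces only a negligible shift of the Airy argument. After multiplication by the scale $\sqrt{2}(2N)^{1/6}$ that converts $\zeta$-perturbations into $s$-perturbations, this error becomes $O(N^{-1/3})$; since $\Ai$ is entire and hence locally Lipschitz, the resulting perturbation of $\Ai(2cz+c^4/4)$ is also $O(N^{-1/3}) = o(1)$, compatible with the stated leading-order asymptotic. A minor additional point is to ensure uniformity in $z$ over compact subsets of $\C$, which is the content of the complex-analytic extension of the Plancherel-Rotach formula found in the classical literature.
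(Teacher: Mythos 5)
Your proof is correct and proceeds along essentially the same route as the paper's: both invoke the critical (Airy-edge) Plancherel--Rotach asymptotics for Hermite polynomials and identify the rescaled Airy variable. The paper states the proof as "immediately follows" and cites the Plancherel--Rotach estimate in the form $H_N\big(\sqrt{2N}+\tfrac{z}{\sqrt 2 N^{1/6}}\big)\sim (2\pi)^{1/4}2^{N/2}\sqrt{N!}\,N^{-1/12}e^{\tfrac12(\cdot)^2}\Ai(z)$, deferring the expansion of $\zeta$ to the beginning of the proof of the subsequent lemma; you carry out this expansion explicitly (via $\mu = c/(2N)^{1/6}$) and reach the same identification $\zeta = 2\sqrt N + \tfrac{1}{\sqrt2(2N)^{1/6}}(2cz+\tfrac{c^4}{4}) + O(N^{-1/2})$ with Airy argument $2cz+c^4/4$. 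You also address the $O(N^{-1/3})$ drift between $\sqrt{4N+1}$ and $2\sqrt N$, which the paper leaves implicit. No gaps.
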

\begin{proof}
This immediately follows from the critical Plancherel-Rotach estimate
\begin{equation} \label{PR critical}
	H_N(\sqrt{2N}+\tfrac{z}{\sqrt{2} N^{1/6} }  ) \sim  (2\pi)^{\frac14} 2^{ \frac{N}{2} } \sqrt{N!} N^{-\frac{1}{12}  }
	 e^{ \frac12 (\sqrt{2N}+\frac{z}{\sqrt{2}N^{1/6} })^2 } \Ai(z),
\end{equation}
see e.g.~\cite{MR0000077}.
\end{proof}

\begin{lem} \label{Lem_RNN12 derivative AH edge}
As $N\to\infty$, we have
\begin{equation}
\pa_z \RN{1}_N(z,w)=-4\sqrt{\pi}c^2 e^{ c^3(z+w)+\frac{c^6}{6}+(z-w)^2 } \Ai\Big( 2c z+\frac{c^4}{4} \Big)  \Ai\Big( 2c w+\frac{c^4}{4} \Big) +O(N^{-\frac16})
\end{equation}
and
\begin{equation}
\pa_w \Big[ e^{-(z-w)^2} \RN{2}_N(z,w) \Big]= -4\sqrt{\pi}c^2 e^{ c^3(z+w)+\frac{c^6}{6} } \Ai\Big( 2c z+\frac{c^4}{4} \Big)  \Ai\Big( 2c w+\frac{c^4}{4} \Big) +O(N^{-\frac16}).
\end{equation}
\end{lem}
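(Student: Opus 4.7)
The plan is to substitute the critical Plancherel--Rotach asymptotics from Lemma~\ref{Lem_SA Hermite AH edge} into the closed-form expressions of Lemma~\ref{Lem_RN12 derivatives}, then carry out the resulting algebraic cancellations. Fix $p=\sqrt{2}(1+\tau)$ and write $\epsilon := c^2/(2N)^{1/3}=1-\tau$; the case $p=-\sqrt{2}(1+\tau)$ is symmetric. A direct Taylor expansion of \eqref{Hermite arguments zeta eta} yields $\sqrt{2}\,(2N)^{1/6}(\zeta-2\sqrt{N})=2cz+c^4/4+o(1)$ and similarly for $\eta$, which is exactly the argument of the Airy function required by Lemma~\ref{Lem_SA Hermite AH edge}. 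Applying that lemma, both $H_{2N}(\zeta)H_{2N-1}(\eta)$ and $H_{2N-1}(\zeta)H_{2N}(\eta)$ share the common leading asymptotic $C_N\,e^{(\zeta^2+\eta^2)/2}\,\Ai(2cz+c^4/4)\Ai(2cw+c^4/4)$ with $C_N := \sqrt{2\pi}\,2^{2N-1/2}\sqrt{(2N)!(2N-1)!}\,(2N)^{-1/6}$. Consequently, to leading order,
\[
\tau H_{2N}(\zeta)H_{2N-1}(\eta)-H_{2N-1}(\zeta)H_{2N}(\eta)\sim (\tau-1)\,C_N\,e^{(\zeta^2+\eta^2)/2}\,\Ai(2cz+\tfrac{c^4}{4})\Ai(2cw+\tfrac{c^4}{4}).
\]
Using $\sqrt{(2N)!(2N-1)!}=(2N-1)!\sqrt{2N}$, the polynomial prefactor in Lemma~\ref{Lem_RN12 derivatives} telescopes cleanly: $\tfrac{4}{\sqrt{\tau}}\tfrac{(\tau/2)^{2N}}{(2N-1)!}\,C_N\,(\tau-1) = -4\sqrt{\pi}\,c^2\,\tau^{2N}\,(1+o(1))$.

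What remains is the exponential part $\tau^{2N}\,\omega_N(z,w)\,e^{(\zeta^2+\eta^2)/2}$. Using the identity $\log\omega_N=(\tau-1)[(P+z)^2+(P+w)^2]+(z-w)^2$ with $P:=p\sqrt{N/(2(1-\tau^2))}$, I would expand each of $\log\tau$, $P^2=N(2-\epsilon)/\epsilon$, and $\zeta^2 = N(1+\tau)^2/\tau+2\sqrt{N(1+\tau)^2(1-\tau^2)/\tau^2}\,z+(1-\tau^2)z^2/\tau$ systematically in $\epsilon$ through order $\epsilon^3$. The three potentially divergent contributions then cancel: the $-4N$ coming from $(\tau-1)P^2$ summed over $z$ and $w$ is matched by $+4N$ in $(\zeta^2+\eta^2)/2$; the $(2N)^{2/3}$ terms $-c^2(2N)^{2/3}$ from $2N\log\tau$ and $+c^2(2N)^{2/3}$ from $(\tau-1)P^2$ cancel; and the $(2N)^{1/3}$ terms $-2c(z+w)(2N)^{1/3}$ from $\log\omega_N$, $-\tfrac{c^4}{2}(2N)^{1/3}$ from $2N\log\tau$, together with $(2c(z+w)+\tfrac{c^4}{2})(2N)^{1/3}$ from $(\zeta^2+\eta^2)/2$, also sum to zero. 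The finite remainder is exactly $c^3(z+w)+c^6/6+(z-w)^2$, which combined with the prefactor above yields the first claim. The second identity for $\pa_w[e^{-(z-w)^2}\RN{2}_N]$ is entirely parallel: the factor $\tau-1$ is already explicit in the second formula of Lemma~\ref{Lem_RN12 derivatives}, and the explicit prefactor $e^{-(z-w)^2}$ merely removes the $(z-w)^2$ produced by $\log\omega_N$.

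The main technical obstacle is justifying that the cancellation in $\tau H_{2N}(\zeta)H_{2N-1}(\eta)-H_{2N-1}(\zeta)H_{2N}(\eta)$ is truly driven by the scalar factor $\tau-1$ alone. Since $\tau-1=O((2N)^{-1/3})$ is of the same order as the generic next-to-leading correction in the critical Plancherel--Rotach expansion, one must verify that the NLO corrections to the two products coincide to sufficient order, so that their difference contributes only an $O(N^{-1/6})$ error. This symmetry is strongly suggested by the fact that both products have identical Airy factors $\Ai(2cz+c^4/4)\Ai(2cw+c^4/4)$ with symmetric constants, but making the error term quantitative requires either a uniform refinement of Lemma~\ref{Lem_SA Hermite AH edge} with explicit subleading terms (e.g.\ via a Riemann--Hilbert analysis) or a direct local expansion of the ratio $H_{2N}(\zeta)/H_{2N-1}(\zeta)$ in the edge scaling. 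All remaining steps reduce to systematic Taylor bookkeeping, and local uniformity in $(z,w)$ is inherited directly from the uniformity of the critical Plancherel--Rotach estimate.
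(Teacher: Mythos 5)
Your proposal follows the same route as the paper: substitute the critical Plancherel--Rotach asymptotics of Lemma~\ref{Lem_SA Hermite AH edge} into the closed-form identities of Lemma~\ref{Lem_RN12 derivatives}, verify that the divergent terms in $\log\omega_N + (\zeta^2+\eta^2)/2 + 2N\log\tau$ cancel to leave $c^3(z+w)+c^6/6+(z-w)^2$, and collect the combinatorial prefactors. Your algebra (the identity for $\log\omega_N$ in terms of $P$, the telescoping of $\sqrt{(2N)!(2N-1)!}$, and the leading factor $-4\sqrt{\pi}c^2$) is all consistent with the paper's computation.

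The concern you flag at the end is legitimate and is worth spelling out, but it does not actually create a gap. Writing $H_{2N}(\zeta)=A(\zeta)(1+\varepsilon_1(z))$ and $H_{2N-1}(\zeta)=B(\zeta)(1+\varepsilon_2(z))$, where $A,B$ are the leading Airy approximants of Lemma~\ref{Lem_SA Hermite AH edge}, the combination in Lemma~\ref{Lem_RN12 derivatives} becomes
\begin{equation}
\tau H_{2N}(\zeta)H_{2N-1}(\eta)-H_{2N-1}(\zeta)H_{2N}(\eta)
= A(\zeta)B(\eta)\Big[(\tau-1)(1+O(\varepsilon)) + \big(\varepsilon_1-\varepsilon_2\big)(z)-\big(\varepsilon_1-\varepsilon_2\big)(w)+O(\varepsilon^2)\Big],
\end{equation}
so the ``bad'' term is governed not by $\varepsilon$ itself but by $\varepsilon_1-\varepsilon_2$, and its size only matters relative to $\tau-1=O(N^{-1/3})$. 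The critical Plancherel--Rotach estimate \eqref{PR critical}, which is uniform on compact Airy ranges, has relative error $O(N^{-2/3})$, so $\varepsilon_1,\varepsilon_2=O(N^{-2/3})$ and hence $(\varepsilon_1-\varepsilon_2)(z)-(\varepsilon_1-\varepsilon_2)(w)=O(N^{-2/3})=o(\tau-1)$. Multiplying by the $O((2N)^{1/3})$ prefactor, this residual contributes $O(N^{-1/3})$, which is absorbed into the stated error (the dominant $O(N^{-1/6})$ actually comes from the exponential Taylor expansion, not the Hermite estimate). No Riemann--Hilbert refinement of Lemma~\ref{Lem_SA Hermite AH edge} is needed; the classical error order already suffices.
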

\begin{proof}

First note that for $\zeta$ given in \eqref{Hermite arguments zeta eta} we have
\begin{equation}
\zeta= 2\sqrt{N}+ \frac{ c^4/4+2c z }{ \sqrt{2}\,(2N)^{1/6}  } +O(N^{-\frac12}), \qquad  \frac{\zeta^2}{2}=2N+( 2^{-\frac53}c^4+2^{\frac43}c z )N^{\frac13}+\frac{c^6}{4}+\frac{c^3}{2}z+O(N^{-\frac16}).
\end{equation}
We also have
\begin{align}
\begin{split}
\log \omega_N(z,w)&= -4N +2^{\frac23}c^2 N^{\frac23}-2^{\frac43}\,c (z+w) N^{\frac13} +(z-w)^2+\frac{c^3}{2}(z+w)+O(N^{-\frac13})
\end{split}
\end{align}
and
\begin{equation}
2N \log \tau= -2^{\frac23} c^2 N^{\frac23}-2^{-\frac23}c^4 N^{\frac13}-\frac{c^6}{3}+O(N^{-\frac13}).
\end{equation}
These give rise to
\begin{align}
\begin{split}
\log \omega_N(z,w)+\frac{\zeta^2+\eta^2}{2}+2N \log \tau&= c^3(z+w)+\frac{c^6}{6}+(z-w)^2+O(N^{-\frac16}).
\end{split}
\end{align}

Note that by Lemma~\ref{Lem_SA Hermite AH edge}, we have
\begin{equation}
H_{2N}(\zeta) H_{2N-1}(\eta) \sim (2\pi)^{\frac12} 2^{ 2N-\frac12 } \sqrt{ (2N)! (2N-1)!} (2N)^{-\frac{1}{6}  }
 e^{ \frac12 (\zeta^2+\eta^2) } \Ai\Big( 2c z+\frac{c^4}{4} \Big)  \Ai\Big( 2c w+\frac{c^4}{4} \Big).
\end{equation}
Also notice that
\begin{equation}
(2\pi)^{\frac12} 2^{2N-\frac12} \sqrt{ (2N)! (2N-1)! } (2N)^{-\frac16}  \frac{ 4 }{ \sqrt{\tau}}  \frac{ 2^{-2N} }{(2N-1)!} (\tau-1) =-4\sqrt{\pi}c^2+O(N^{-\frac13}).
\end{equation}
Combining all of the above with Lemma~\ref{Lem_RN12 derivatives}, we obtain
\begin{equation}
\pa_z \RN{1}_N(z,w)= -4\sqrt{\pi}c^2 e^{ c^3(z+w)+\frac{c^6}{6}+(z-w)^2 } \Ai\Big( 2c z+\frac{c^4}{4} \Big)  \Ai\Big( 2c w+\frac{c^4}{4} \Big) +O(N^{-\frac16})
\end{equation}
and
\begin{equation}
\pa_w \Big[ e^{-(z-w)^2} \RN{2}_N(z,w) \Big]  = -4\sqrt{\pi}c^2 e^{ c^3(z+w)+\frac{c^6}{6} } \Ai\Big( 2c z+\frac{c^4}{4} \Big)  \Ai\Big( 2c w+\frac{c^4}{4} \Big) +O(N^{-\frac16}).
\end{equation}
This completes the proof.
\end{proof}

\begin{lem} \label{Lem_RN12 derivative AH edge}
The functions $\RN{1}$ and $\RN{2}$ given by \eqref{RN1 AH edge} and \eqref{RN2 AH edge} satisfy
\begin{equation}
\frac{\pa}{\pa z} \RN{1}(z,w)=-\sqrt{\pi} \,e^{(z-w)^2}\,  f_{z}'(0) f_{w}'(0), \qquad \frac{\pa}{\pa w} \Big[ e^{-(z-w)^2} \RN{2}(z,w) \Big] =-\sqrt{\pi} \,  f_{z}'(0) f_{w}'(0).
\end{equation}
\end{lem}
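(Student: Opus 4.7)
The plan is to prove both identities by direct computation from the explicit integral representations \eqref{RN1 AH edge} and \eqref{RN2 AH edge}. The two main tools are (i) the intertwining relation $\pa_z f'_z(u) = -\pa_u f'_z(u)$, which follows because $f'_z(u)$ depends on $(z,u)$ only through $z-u$, and (ii) integration by parts in $u$, justified by the superexponential decay of $\Ai$ at $+\infty$ which forces $f'_z(u)$ and its derivatives to vanish as $u\to -\infty$.

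I would do the $\RN{2}$ identity first since it is the cleaner of the two. Unpacking the constants in \eqref{RN2 AH edge} using $f'_z(u) = 2c\,e^{c^3(z-u)+c^6/12}\Ai(2c(z-u)+c^4/4)$, one sees that $\RN{2}(z,w) = \sqrt{\pi}\,e^{(z-w)^2}\,f'_z(0)\,F(w)$ where $F(w) := \int_{-\infty}^0 f'_w(u)\,du$. Hence $e^{-(z-w)^2}\RN{2}(z,w) = \sqrt{\pi}\,f'_z(0)\,F(w)$ and $\pa_w\bigl[e^{-(z-w)^2}\RN{2}\bigr] = \sqrt{\pi}\,f'_z(0)\,F'(w)$. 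Applying (i)--(ii) with $w$ in place of $z$ gives $F'(w) = \int \pa_w f'_w\,du = -\int \pa_u f'_w\,du = -f'_w(0)$, which yields the asserted formula.

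For $\RN{1}$, the product identity $f'_z(u)f'_w(u) = 4c^2\,e^{c^3(z+w-2u)+c^6/6}\Ai(2c(z-u)+c^4/4)\Ai(2c(w-u)+c^4/4)$ rewrites \eqref{RN1 AH edge} as $\RN{1}(z,w) = 2\sqrt{\pi}\,e^{(z-w)^2}\,T(z,w)$ with $T(z,w) := \int_{-\infty}^0 f'_z(u)f'_w(u)\,du$. Differentiating in $z$, using (i), and performing one integration by parts as in (ii) produces
\[
\pa_z \RN{1}(z,w) = 4\sqrt{\pi}(z-w)e^{(z-w)^2}T - 2\sqrt{\pi}e^{(z-w)^2} f'_z(0) f'_w(0) + 2\sqrt{\pi}e^{(z-w)^2}\!\int_{-\infty}^0\! f'_z(u) f''_w(u)\,du.
\]
Reducing the right-hand side to the claimed $-\sqrt{\pi}e^{(z-w)^2} f'_z(0) f'_w(0)$ is equivalent to the algebraic identity $4(z-w)\,T(z,w) + 2\int_{-\infty}^0 f'_z(u) f''_w(u)\,du = f'_z(0) f'_w(0)$.

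The hard part will be closing that last identity, and here the plan is to invoke the Airy differential equation. Setting $G(s) := e^{c^3 s + c^6/12}\Ai(2cs+c^4/4)$, one verifies from $\Ai''(x)=x\Ai(x)$ that $G''(s)=2c^3 G'(s)+8c^3 s\,G(s)$. Since $f'_z(u) = 2c\,G(z-u)$, the Wronskian $W(u) := f''_z(u) f'_w(u) - f'_z(u) f''_w(u)$ satisfies the first-order linear ODE $W'(u) + 2c^3 W(u) = 8c^3(z-w)\,f'_z(u) f'_w(u)$; multiplying by the integrating factor $e^{2c^3 u}$, integrating over $(-\infty,0)$ using $W(-\infty) = 0$, and combining with the companion identity $\pa_z T + \pa_w T = -f'_z(0) f'_w(0)$ (itself a consequence of (i)--(ii)) should close the computation. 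The most delicate step is arranging the boundary evaluation $W(0) = 4c^2[G(z)G'(w)-G'(z)G(w)]$ so that the Airy-Wronskian contribution cancels against the $(z-w)T$ term and leaves only $f'_z(0) f'_w(0)$ on the right.
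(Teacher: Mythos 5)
Your treatment of the $\RN{2}$ identity is correct, and it is essentially the same argument the paper gives: rewrite $e^{-(z-w)^2}\RN{2}(z,w)$ as $\sqrt{\pi}\,f_z'(0)\,F(w)$ (the paper writes it as $-\sqrt{\pi}\,f_z'(0)\,f_w(-\infty)$, which is the same thing), then use $\partial_w f'_w(u)=-\partial_u f'_w(u)$ and the vanishing of $f'_w$ at $-\infty$. No objection there.

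The $\RN{1}$ part, however, does not close as written. Your reduction to the claimed algebraic identity $4(z-w)\,T + 2\int_{-\infty}^0 f_z'(u)f_w''(u)\,du = f_z'(0)f_w'(0)$ is correct, and your verification of the ODE $W'(u)+2c^3W(u)=8c^3(z-w)f'_z(u)f'_w(u)$ is also correct. But the two ways you propose to use that ODE do not produce the target. If you multiply by the integrating factor $e^{2c^3u}$, integrating over $(-\infty,0)$ gives $W(0)=8c^3(z-w)\int_{-\infty}^0 e^{2c^3u}f_z'(u)f_w'(u)\,du$; note that $e^{2c^3u}f_z'(u)f_w'(u)=4c^2e^{c^3(z+w)+c^6/6}\Ai(2c(z-u)+\tfrac{c^4}{4})\Ai(2c(w-u)+\tfrac{c^4}{4})$ has had its $u$-dependent exponential killed, so this is a genuinely different integral from $T=\int_{-\infty}^0 f_z'f_w'\,du$, and there is no direct algebraic bridge to the companion identity $\partial_zT+\partial_wT=-f'_z(0)f'_w(0)$. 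If instead you integrate the ODE without the integrating factor, you get $\int_{-\infty}^0 W\,du = 4(z-w)T - W(0)/(2c^3)$; combining this with the companion identity yields $4(z-w)T+2\int f_z'f_w''\,du = f'_z(0)f'_w(0) + W(0)/(2c^3)$, which has an extra boundary term. That term $W(0)=f_z''(0)f_w'(0)-f_z'(0)f_w''(0)=8c^3 e^{c^3(z+w)+c^6/6}\bigl[\Ai(a)\Ai'(b)-\Ai'(a)\Ai(b)\bigr]$ (with $a=2cz+\tfrac{c^4}{4}$, $b=2cw+\tfrac{c^4}{4}$) vanishes only at $z=w$, so it does not simply disappear. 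Your phrase ``the most delicate step is arranging the boundary evaluation $W(0)$ \ldots so that the Airy-Wronskian contribution cancels'' is exactly the piece of the argument that is missing and unargued. The paper sidesteps the symmetrised ODE entirely: it pins down the value $\partial_z\RN{1}\big|_{w=z}=-\sqrt{\pi}\,f_z'(0)^2$ (immediate from $\int_{-\infty}^0 f_z''f_z'\,du=\tfrac12 f_z'(0)^2$) and then works with the mixed second derivative $\partial^2_{zw}\RN{1}$, integrating in $w$. You should either adopt that two-step (value-plus-derivative) structure, or explicitly supply the missing cancellation of $W(0)/(2c^3)$ in your scheme --- as it stands the stray boundary term is a genuine gap.
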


\begin{proof}
The second identity is obvious since
\begin{align*}
\pa_w \Big[ e^{-(z-w)^2} \RN{2}(z,w) \Big] &=-\sqrt{\pi} \, f_{z}'(0) \frac{\pa}{\pa w} f_{w}(-\infty)
\\
&= -\sqrt{\pi} \, f_{z}'(0) \Big( f'_{w}(0)-f'_{w}(-\infty) \Big)=-\sqrt{\pi} \,  f_{z}'(0) f_{w}'(0).
\end{align*}
For the first one, note that
\begin{align*}
\frac{\pa}{\pa z}\RN{1}(z,w) &= 2 \sqrt{\pi} \,\frac{\pa}{\pa z} \Big[ e^{(z-w)^2} \int_{-\infty}^0 f'_{z}(u)f'_{w}(u)\,du \Big]
\\
&=2\sqrt{\pi}\, e^{(z-w)^2}  \Big[  2(z-w)  \int_{-\infty}^0 f'_{z}(u)f'_{w}(u)\,du+  \int_{-\infty}^0 \frac{\pa}{\pa z} f'_{z}(u)f'_{w}(u)\,du \Big]
\\
&=2\sqrt{\pi}\, e^{(z-w)^2}  \Big[  2(z-w)  \int_{-\infty}^0 f'_{z}(u)f'_{w}(u)\,du - \int_{-\infty}^0  f''_{z}(u)f'_{w}(u)\,du \Big].
\end{align*}
Thus we have
\begin{align*}
\frac{\pa}{\pa z} \RN{1}(z,w)\Big|_{w=z}&=-2\sqrt{\pi}\,  \int_{-\infty}^0  f''_{z}(u)f'_{z}(u)\,du
=-2\sqrt{\pi}\,  f'_{z}(0)^2 -\frac{\pa}{\pa z} \RN{1}(z,w)\Big|_{w=z}.
\end{align*}
Therefore all we need to show is
\begin{align}
\begin{split}
\frac{\pa^2}{\pa z \pa w} \RN{1}(z,w)&=2\sqrt{\pi}(z-w) e^{(z-w)^2}   f_{z}'(0) f_{w}'(0)+\sqrt{\pi} \,e^{(z-w)^2}\,  f_{z}'(0) f_{w}''(0)
\\
&=-2\sqrt{\pi}(z-w) e^{(z-w)^2}   f_{z}'(0) f_{w}'(0)+\sqrt{\pi} \,e^{(z-w)^2}\,  f_{z}''(0) f_{w}'(0)
\\
&=\frac{\sqrt{\pi}}{2} e^{(z-w)^2} \Big( f_{z}'(0) f_{w}''(0)  + f_{z}''(0) f_{w}'(0)\Big).
\end{split}
\end{align}
This follows from straightforward computations using integration by parts.
\end{proof}

We now prove Proposition~\ref{Prop_inhom terms AH edge}.

\begin{proof}[Proof of Proposition~\ref{Prop_inhom terms AH edge}]

By Lemmas~\ref{Lem_RNN12 derivative AH edge} and ~\ref{Lem_RN12 derivative AH edge}, it suffices to show the proposition for some initial values.

As above, for the function $\RN{1}$, we choose the initial value $z=\bar{w}$. Then the value of $\RN{1}(\bar{w},w)$ follows from  \cite{bender2010edge,akemann2010interpolation,AB21}.

Therefore again, it remains to show $\RN{2}_N(z,w) \to \RN{2}(z,w)$ for some $w$.
For $w=0$, we shall show that
\begin{equation} \label{RN2 0 edge}
e^{-z^2}\RN{2}_N(z,0) \to 4\sqrt{\pi}c^2 \Ai\Big(2cz+\frac{c^4}{4}\Big) \int_{-\infty}^{0} e^{c^3(z-u)+\frac{c^6}{6}} \Ai\Big(-2cu+\frac{c^4}{4}\Big) \, du.
\end{equation}
For this purpose, we again use the expression \eqref{RN2 TN}.
After inserting \eqref{tau AH edge}, the limit of the prefactor is easy to compute and for the Hermite polynomials in $z$ we use the asymptotics from Lemma~\ref{Lem_SA Hermite AH edge}.
Only the sum $T_N(p_N)$ with $p_N=\sqrt{2}(1+\tau)$ remains for which we use the following ansatz:
\begin{equation} \label{eq:TNp Ansatz edge}
T_N(p_N) = T_N(0) + \int_{0}^{r_N} T'_N(t) \, dt + \int_{r_N}^{p_N} T'_N(t) \, dt, \qquad r_N := p_N - (2N)^{-\frac23+\alpha},
\end{equation}
where $0 < \alpha < \frac{4}{15}$.
Note that by Lemma~\ref{Lem_CDI orthogonal}, we have
\begin{equation}
T'_N(t)= -\sqrt{\frac{N}{2\tau}} \frac{\tau^N}{1+\tau} \frac{1}{4^{N-1}(N-1)!} e^{-\frac{N}{2(1+\tau)}t^2} H_{2N-1}\Big( \sqrt{\tfrac{N}{2\tau}}t \Big).
\end{equation}
Recall also that by \eqref{TN 0}, we have $\lim_{N\to\infty} T_N(0) = \frac{1}{\sqrt{2}}$.

In the bulk $t \in [0, r_N]$ we use the Hermite asymptotics from the oscillatory regime
\begin{equation}
H_{2N-1}\Big( \sqrt{\tfrac{N}{2\tau}} t \Big) = \Big( 4-\tfrac{t^2}{2} \Big)^{-\frac14} 2^{2N} N^{N-\frac12} \exp\Big( -N+\tfrac{N}{4}t^2+\tfrac{c^2}{8}t^2(2N)^{\frac23}+\tfrac{c^4}{8}t^2(2N)^{\frac13}+\tfrac{c^6}{8}t^2 \Big) O(1)
\end{equation}
where the $O(1)$-term is uniform because $\alpha > 0$, cf. \cite[Eq. (3.3), (3.5)]{skovgaard1959asymptotic}.
Hence the derivative has the asymptotic form
\begin{equation}
T'_N(t) = -\sqrt{N} \Big( 1-\tfrac{t^2}{8} \Big)^{-\frac14} \exp\Big( -\tfrac{c^2}{2}(1-\tfrac{t^2}{8})(2N)^{\frac23} -\tfrac{c^4}{4}(1-\tfrac{3t^2}{8})(2N)^{\frac13} -\tfrac{c^6}{8}(\tfrac{4}{3}-\tfrac{7t^2}{8}) \Big) O(1).
\end{equation}
Thus we conclude that there exists a constant $C > 0$ such that for all sufficiently large $N$ and all $t \in [0, r_N]$ the following inequality holds
\begin{equation}
\abs{T'_N(t)} \leq C \frac{\sqrt{N}}{\sqrt{c}} (2N)^{\frac{1}{12}} \exp\Big( -\frac{c^2}{2\sqrt{2}}N^\alpha+\frac{c^6}{12} \Big).
\end{equation}
Since $\alpha > 0$ we obtain $\int_{0}^{r_N} T'_N(t) dt \to 0$ for $N \to \infty$.

Near the edge we set $t=\sqrt{2}(1+\tau)+(2N)^{-\frac23}s$.
Then by Lemma~\ref{Lem_SA Hermite AH edge} we have
\begin{equation}
T'_N(t) = -2^{-\frac13} N^{\frac23} \exp\Big( \tfrac{c^6}{12}+\tfrac{c^2}{2\sqrt{2}}s \Big) \Big[ \Ai\big(\tfrac{s}{\sqrt{2}}+\tfrac{c^4}{4}\big) + O(N^{-\frac23+\frac{5}{2}\alpha}) \Big].
\end{equation}
and the error term is again uniform because $\alpha < \frac{1}{3}$, cf.\ \cite[Eq. (3.11), (3.13)]{skovgaard1959asymptotic}.
Thus we obtain that
\begin{equation}
\begin{split}
\int_{r_N}^{p_N} T'_N(t) \, dt
&= (2N)^{-\frac23} \int_{-(2N)^\alpha}^{0} T'_N\big( p_N+(2N)^{-\frac23}s \big) \, ds \\
&\sim -\frac{1}{2} e^{\frac{c^6}{12}} \int_{-\infty}^{0} e^{\frac{c^2}{2\sqrt{2}}s} \Ai\Big( \frac{s}{\sqrt{2}}+\frac{c^4}{4} \Big) \, ds
= -\sqrt{2} \, c \, e^{\frac{c^6}{12}} \int_{0}^{\infty} e^{-c^3 u} \Ai\Big( -2cu+\frac{c^4}{4} \Big) \, du.
\end{split}
\end{equation}
Also note that the Airy-function has the following Laplace transform \cite[Eq. (9.10.13)]{olver2010nist}
\begin{equation}
\int_{-\infty}^{\infty} e^{a v} \Ai(v) dv = e^{\frac{a^3}{3}}, \qquad \re a > 0.
\end{equation}
Therefore we can rewrite the initial value $1/\sqrt{2}$ as an integral over $e^{-c^3 u} \Ai(2cu+\frac{c^4}{4})$ with $u\in(-\infty,\infty)$.
Combining all of the above with \eqref{eq:TNp Ansatz edge}, we arrive at
\begin{equation}
T_N(p_N) \sim \sqrt{2} c \, e^{2N-\frac{c^2}{2}(2N)^{\frac{2}{3}}+\frac{c^6}{12}} \int_{-\infty}^{0} e^{-c^3 u} \Ai\Big(-2cu+\frac{c^4}{4}\Big) \, du.
\end{equation}
Now, it follows from the straightforward computations of the pre-factors that the claimed formula \eqref{RN2 0 edge} holds.

\end{proof}

\section{Scaling limits of the soft/hard and hard edge ensembles}\label{Section_NH heW}

In this section, we study the Ginibre ensemble with boundary confinements and prove Theorems~\ref{Thm_NH hE} and ~\ref{Thm_NH hW}.
In the first subsection, we summarise the strategy of our proofs using the Laplace method.
In Subsections~\ref{Subsec_SH edge} and ~\ref{Subsec_H edge}, we derive the boundary scaling limits of the Ginibre ensemble with soft/hard edge (Theorem~\ref{Thm_NH hE}) and hard edge (Theorem~\ref{Thm_NH hW}) constraint respectively.

\subsection{Strategy of the proof: the Laplace method}\label{Subsec_Laplace}

We consider the potential $Q$ of the form \eqref{Q Ginibre HE} and \eqref{Q Ginibre HW}.
Let us write
\begin{equation} \label{omega m}
    w_m(\zeta) := (h_m)^{-\frac{1}{2}}\zeta^m e^{-N|\zeta|^2/2}
\end{equation}
for the weighted orthonormal polynomial, where $h_m$ is the orthogonal norm given by \eqref{hk Qradial}.
Since $Q$ is radially symmetric, it follows from \eqref{skew op_rad}, \eqref{kappa rescaling} and \eqref{bfkappaN skewOP} that the rescaled and weighted pre-kernel $\kappa_N$ can be written as
\begin{equation} \label{kappa GN}
    \kappa_N(z,w) e^{-N(|\zeta|^2 + |\eta|^2)/2} = G_N(z,w) - G_N(w,z),
\end{equation}
where
\begin{equation} \label{GNsumsh}
 G_N(z,w):= \frac{\gamma_N^3}{2} \sum_{k=0}^{N-1} w_{2k+1}(\zeta)\Big[ \Big(\frac{h_{2k}}{h_{2k+1}}\Big)^{\frac{1}{2}}w_{2k}(\eta) + \sum_{l=0}^{k-1}\Big(\frac{h_{2l}}{h_{2k+1}}\Big)^{\frac{1}{2}}\prod_{j=0}^{k-l-1}\frac{h_{2l+2j+2}}{h_{2l+2j+1}}w_{2l}(\eta) \Big].
\end{equation}
Here and in the sequel, we denote $\zeta = p+\gamma_N z$ and $\eta = p+\gamma_N w$, where $\gamma_N$ is the microscopic scale given by \eqref{microscale Laplacian} for the soft/hard edge case and by \eqref{microscale HW} for the hard edge case.
Throughout this section, we write
\begin{equation} \label{delta n tau m}
\delta_N = N^{-\frac{1}{2}}\log N, \qquad \tau_m = \frac{m}{N}.
\end{equation}
Recall also that $D_\rho :=  \{z\in \C: |z|\leq \sqrt{2}\rho\}$.

The general strategy of deriving the large-$N$ behaviour of $G_N$ is as follows:
\begin{itemize}
    \item for $m \in \{1,2,\dots, 2N-1\}$, we compute the asymptotic behaviours of $w_m$ by means of Laplace's method;
    \smallskip
    \item we show that in the case of soft/hard edge ensemble, the dominant terms of $G_N$ near $\pa  D_1$ consist of $w_m$ with $2(1-\delta_N) \leq \tau_m \leq 2$, whereas in the case of hard edge ensemble, those near $\pa  D_\rho$ consist of $w_m$ with $ 2(\rho^2 + \delta_N) \leq \tau_m \leq 2$;
    \smallskip
    \item by discarding the lower degree terms, we compile the contributions of dominant terms using the Riemann sum approximation.
\end{itemize}

We now present some asymptotic behaviours of $h_m$.
For this purpose, we define
\begin{equation}\label{Vtau}
V_{\tau}(r) := r^2 - 2\tau \log r,\qquad r>0.
\end{equation}
Observe that the function $V_\tau$ has a unique critical point at $r_\tau := \sqrt{\tau}$.
It is convenient to write
\begin{equation}\label{Phi erfc}
    \Phi(x) := \int_{x}^{\infty} e^{-t^2}\,dt = \frac{\sqrt{\pi}}{2}\erfc(x).
\end{equation}

We first obtain the following. (See \cite[Lemmas 3.1, 3.3]{seo2020edge}) for a related statement.)

\begin{lem}\label{lem:henorm}  \label{lem:hardwallnorm}
For a given $\rho \in (0,1]$, let  $Q(\zeta)=|\zeta|^2 + \infty \cdot 1_{\C \setminus D_\rho}$. Then we have the following.
\begin{itemize}
    \item[(i)] For each $m$ with $|\tau_m - 2\rho^2|<2\delta_N$, we have
\begin{equation}
    h_m = e^{-m}\Big(\frac{m}{N}\Big)^{m}\frac{2\rho}{\sqrt{N}}\, \Phi(\xi_m)(1+o(1)), \qquad  \xi_m := \tfrac{\sqrt{N}}{2\rho}(\tau_m-2\rho^2),
\end{equation}
where $o(1)\to 0$ uniformly for $m$ as $N\to \infty$. \smallskip
    \item[(ii)] For each $m$ with $ 2(\rho^2 + \delta_N) \leq \tau_m \leq 2$, we have
\begin{equation}
    h_m = \frac{(2\rho^2)^{m+1}}{N(\frac{m}{N}-2\rho^2)}e^{-2\rho^2N}(1+o(1)),
\end{equation}
where $o(1)\to 0$ uniformly for $m$ as $N\to\infty$.
\end{itemize}
\end{lem}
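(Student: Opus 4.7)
The starting point is the polar representation, which for radially symmetric weights reduces the two-dimensional orthogonality integral to
\begin{equation}
h_m = 2 \int_0^{\sqrt{2}\rho} r^{2m+1} e^{-Nr^2}\,dr = 2\int_0^{\sqrt{2}\rho} r\, e^{-N V_{\tau_m}(r)}\,dr,
\end{equation}
where $V_{\tau_m}$ is defined in \eqref{Vtau} and $\tau_m = m/N$ is as in \eqref{delta n tau m}. The function $V_{\tau_m}$ is strictly convex with unique minimum at $r_{\tau_m} = \sqrt{\tau_m}$, where $V_{\tau_m}(\sqrt{\tau_m}) = \tau_m - \tau_m\log\tau_m$ (so $e^{-NV_{\tau_m}(\sqrt{\tau_m})}=e^{-m}(m/N)^m$) and $V''_{\tau_m}(\sqrt{\tau_m}) = 4$. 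This puts the problem in the framework of Laplace's method, and the two cases of the lemma correspond to whether the critical point $\sqrt{\tau_m}$ lies near the boundary $\sqrt{2}\rho$ or strictly beyond it.

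For case (i), where $|\tau_m - 2\rho^2| < 2\delta_N$, the minimiser $\sqrt{\tau_m}$ is at distance $O(N^{-1/2}\log N)$ from the hard wall. I would substitute $r = \sqrt{\tau_m} + u/\sqrt{N}$, so that a second-order Taylor expansion gives $NV_{\tau_m}(r) = NV_{\tau_m}(\sqrt{\tau_m}) + 2u^2 + O(u^3/\sqrt{N})$, uniformly in $m$. The integral becomes
\begin{equation}
h_m = \frac{2\sqrt{\tau_m}}{\sqrt{N}} e^{-m}\Big(\tfrac{m}{N}\Big)^{m} \int_{-\sqrt{N\tau_m}}^{\sqrt{N}(\sqrt{2}\rho - \sqrt{\tau_m})} e^{-2u^2}\bigl(1+O(u/\sqrt{N})\bigr)\,du,
\end{equation}
and the upper limit satisfies $\sqrt{N}(\sqrt{2}\rho - \sqrt{\tau_m}) = -\xi_m/\sqrt{2} + O(N^{-1/2}\log^2 N)$ by Taylor expanding $\sqrt{\tau_m}$ around $\sqrt{2}\rho$. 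After the further substitution $v = u\sqrt{2}$ the truncated Gaussian collapses to $\tfrac{1}{\sqrt{2}}\Phi(\xi_m)$, and $2\sqrt{\tau_m}=2\sqrt{2}\rho + O(\delta_N)$ gives the prefactor $2\rho/\sqrt{N}$.

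For case (ii), where $\tau_m \geq 2\rho^2 + 2\delta_N$, the critical point $\sqrt{\tau_m}$ lies strictly outside $[0,\sqrt{2}\rho]$, so $V_{\tau_m}$ is monotonically decreasing on the interval of integration and attains its minimum at the endpoint. A first-order expansion $V_{\tau_m}(\sqrt{2}\rho - t) = V_{\tau_m}(\sqrt{2}\rho) - V'_{\tau_m}(\sqrt{2}\rho)\,t + O(t^2)$ with $-V'_{\tau_m}(\sqrt{2}\rho) = \sqrt{2}(\tau_m - 2\rho^2)/\rho > 0$ and $e^{-NV_{\tau_m}(\sqrt{2}\rho)} = (2\rho^2)^m e^{-2N\rho^2}$ turns the integral into a one-sided exponential at the endpoint; Watson's lemma gives
\begin{equation}
h_m \sim 2\sqrt{2}\rho \cdot (2\rho^2)^m e^{-2N\rho^2} \cdot \frac{1}{-NV'_{\tau_m}(\sqrt{2}\rho)} = \frac{(2\rho^2)^{m+1}}{N(\tau_m - 2\rho^2)} e^{-2N\rho^2},
\end{equation}
which is the asserted formula.

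The only real subtlety is uniformity of the $o(1)$ in $m$. In case (i) the quadratic approximation needs to be valid for $|u|$ up to $O(\log N)$, so the cubic remainder $O(u^3/\sqrt{N})$ must be absorbed without destroying the Gaussian tail; standard bookkeeping shows it contributes $O(N^{-1/2}\log^3 N)$. In case (ii) the danger point is $\tau_m$ close to $2\rho^2 + 2\delta_N$, where $-NV'_{\tau_m}(\sqrt{2}\rho) \gtrsim \log N \to \infty$ but slowly, so the quadratic remainder $e^{O(Nt^2)}$ must be dominated by the linear term over the effective range $t \lesssim 1/(N\delta_N) = N^{-1/2}/\log N$. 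Checking both regimes reduces to an explicit estimate on $V_{\tau_m}$ that I would verify by a single convexity argument covering the two ranges simultaneously. This uniformity check is the only nontrivial point; the rest is the standard Laplace calculation described above.
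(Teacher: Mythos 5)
Your proposal is correct and follows essentially the same route as the paper: in both cases the orthogonal norm is written as a one-dimensional Laplace integral with phase $V_{\tau_m}$, and the two regimes are distinguished by whether the interior minimiser $\sqrt{\tau_m}$ is within $O(\delta_N)$ of the hard wall $\sqrt{2}\rho$ (quadratic expansion at $\sqrt{\tau_m}$, Gaussian integral truncated at the wall giving $\Phi(\xi_m)$) or strictly beyond it (linear expansion at the endpoint $\sqrt{2}\rho$, one-sided exponential integral). The paper makes the uniformity bookkeeping explicit via the cutoff $\delta_N' = N^{-1/2}(\log N)^2$ in case (i) and the restriction to $[\sqrt{2}\rho - \delta_N, \sqrt{2}\rho]$ in case (ii), which is exactly the estimate you flag as the remaining subtlety; otherwise the two arguments coincide step by step.
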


\begin{proof}
For $m$ with $|\tau_m - 2\rho^2|<2\delta_N$, the orthogonal norm $h_m$ can be written as
\begin{equation}
\label{h:divide}
    h_m:=\int_{D_\rho}|\zeta|^{2m} e^{-N|\zeta|^2} \,dA(\zeta)
    = 2\int_{r_{\tau_m}-\delta_N'}^{\sqrt{2}\rho} e^{-NV_{\tau_m}(r)} r\,dr + 2\int_{0}^{r_{\tau_m}-\delta_N'} e^{-NV_{\tau_m}(r)}r\,dr,
\end{equation}
where $\delta_N' := N^{-\frac{1}{2}}(\log N)^2$.
By the Taylor expansion
\begin{equation}
    V_\tau(r) = V_\tau(r_\tau) + 2(r-r_\tau)^2 + O(r-r_\tau)^3,\qquad r\to r_\tau
\end{equation}
at the critical point $r = r_\tau$, we obtain that as $N \to \infty$,
\begin{equation}
\label{h:first}
    2\int_{r_{\tau_m}-\delta_N'}^{\sqrt{2}\rho} r e^{-NV_{\tau_m}(r)} \, dr = 2e^{-NV_{\tau_m}(r_{\tau_m})}(\sqrt{2}\rho + O(\delta_N'))\int_{r_{\tau_m}-\delta_N'}^{\sqrt{2}\rho} e^{-2N(r-r_{\tau_m})^2 + O(N\delta_N'^3)} \, dr.
\end{equation}
Here, $O$-constant can be taken independent of $m$.
Note that by \eqref{delta n tau m},
\begin{equation*}
     r_{\tau_m}- \sqrt{2}\rho =  \sqrt{\tau_m} -\sqrt{2}\rho = \frac{1}{2\sqrt{2}\rho}(\tau_m-2\rho^2)+O(\delta_N^2),\qquad N\to\infty.
\end{equation*}
Therefore, by the change of variable $t = \sqrt{2N}(r-r_{\tau_m})$, we have
\begin{equation}\label{h:first2}
    \int_{r_{\tau_m}-\delta_N'}^{\sqrt{2}\rho} e^{-2N(r-r_{\tau_m})^2} \, dr = \frac{1}{\sqrt{2N}}\int_{\xi_m}^{\infty} e^{-t^2} \, dt \, (1+o(1))
\end{equation}
as $N\to \infty$.
Since $V_{\tau_m}(r_{\tau_m}) = \frac{m}{N} - \frac{m}{N}\log\frac{m}{N}$, it follows from \eqref{h:first} and \eqref{h:first2} that
\begin{equation}\label{h:first3}
    2\int_{r_{\tau_m}-\delta_N'}^{\sqrt{2}\rho} e^{-NV_{\tau_m}(r)} r\,dr = e^{-m}\Big(\frac{m}{N}\Big)^m\, \frac{2\rho}{\sqrt{N}}\,\Phi(\xi_m)\,(1+o(1)),
\end{equation}
where $o(1)\to 0$ uniformly for $m$ as $N\to \infty$.

On the other hand, the second integral on the right-hand side of \eqref{h:divide} is negligible.
To see this, observe first that $V'_{\tau_m}(r_{\tau_m})=0$ and
$V'_{\tau_m}(r) < 0$ for $r<r_{\tau_m}$. Thus for $r<r_{\tau_m}-\delta_N'$
\begin{equation*}
    V_{\tau_m}(r) \geq V_{\tau_m}(r_{\tau_m}-\delta_N') = V_{\tau_m}(r_{\tau_m}) + C\delta_N'^2
\end{equation*}
for some positive constant $C$.
This gives that as $N \to \infty,$
\begin{equation}
    2\int_{0}^{r_{\tau_m}-\delta_N'} e^{-NV_{\tau_m}(r)}r\,dr \leq C'e^{-NV_{\tau_m}(r_{\tau_m})}e^{-CN\delta_N'^2} = e^{-m}\Big(\frac{m}{N}\Big)^{m} O(e^{-C(\log N)^4}).
\end{equation}
We have shown the first assertion of the lemma.

Next, we show the second assertion.
The proof is similar to that of Lemma~\ref{lem:henorm} (i).
A key observation is that for $m$ with $2N(\rho^2 + \delta_N)\leq m\leq 2N$
\begin{align*}
   h_m &= \int_{\sqrt{2}\rho -\delta_N}^{\sqrt{2}\rho} 2r^{m+1} e^{-Nr^2}\, dr \cdot (1+o(1)) \\
    &= 2\sqrt{2}\rho\, e^{-NV_{\tau_m}(\sqrt{2}\rho)} \int_{0}^{\delta_N}  e^{-\frac{\sqrt{2}N}{\rho}(\frac{m}{N}-2\rho^2) t}\, dt \cdot(1+o(1)) = \frac{(2\rho^2)^{m+1}}{N(\frac{m}{N}-2\rho^2)} e^{-2\rho^2N}\cdot (1+o(1)),
\end{align*}
which completes the proof.
\end{proof}

\subsection{Non-Hermitian soft/hard edge scaling limit} \label{Subsec_SH edge}

In this subsection, we prove Theorem~\ref{Thm_NH hE}.
We first show the following proposition (here $\H_{-} = \{ z \in \C : \Re z < 0 \}$ denotes the left half plane).

\begin{prop}\label{thm:HES}
There exists a sequence of cocycles $(c_N)_{N \ge 1}$ such that
\begin{equation*}
    \lim_{N\to\infty} c_N(z,w)G_N(z,w) = 2e^{z^2 - |z|^2 + w^2 - |w|^2} \int_{-\infty}^{0} \frac{e^{-2(z - s)^2}}{\sqrt{\Phi(2s)}} \int_{-\infty}^{s} \frac{e^{-2(w - t)^2}}{\sqrt{\Phi(2t)}} dt \,ds,
\end{equation*}
where the convergence is uniform for $z,w$ in compact subsets of $\H_{-}$. Here, the function $\Phi$ is given by \eqref{Phi erfc}.
\end{prop}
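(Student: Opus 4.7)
My plan is to proceed along the three-step Laplace-method strategy outlined in Subsection~\ref{Subsec_Laplace}. For the soft/hard edge ensemble at $p=\sqrt{2}$ (i.e.\ $\rho=1$) one has $\Delta Q(p)=1$, hence $\delta=\tfrac12$ and $\gamma_N=\sqrt{2/N}$; thus $\zeta=\sqrt{2}+z\sqrt{2/N}$ and $\eta=\sqrt{2}+w\sqrt{2/N}$. The cocycle will be chosen as $c_N(z,w)=\exp\bigl(-2i\sqrt{N}\,(\Im z+\Im w)\bigr)$, precisely to cancel the oscillating phases arising from $\zeta^m$ and $\eta^m$.

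The first step is the edge asymptotics of the weighted monomials $w_m$ defined in \eqref{omega m}. Parametrising edge indices as $m=2N+4s\sqrt{N}$ so that the variable in Lemma~\ref{lem:henorm}(i) is $\xi_m=2s$, Taylor-expanding $m\log\zeta-\tfrac{N}{2}|\zeta|^2$ through order $N^{-1}$ and applying Lemma~\ref{lem:henorm}(i) gives
\begin{equation*}
w_m(\zeta)=\frac{N^{1/4}}{\sqrt{2}}\,e^{2i\sqrt{N}\,\Im z}\,e^{z^2-|z|^2}\,\frac{e^{-2(z-s)^2}}{\sqrt{\Phi(2s)}}\,\bigl(1+o(1)\bigr),
\end{equation*}
uniformly for $z$ in a compact subset of $\H_-$ and $s\in[-\log N,0]$, with the analogous formula for $w_m(\eta)$ under $(z,s)\mapsto(w,t)$.

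The second step is to control the coefficients appearing in \eqref{GNsumsh}; denote them
$\tilde a_{k,l}:=(h_{2l}/h_{2k+1})^{1/2}\prod_{j=0}^{k-l-1}(h_{2l+2j+2}/h_{2l+2j+1})$ for $l<k$, and $\tilde a_{k,k}:=(h_{2k}/h_{2k+1})^{1/2}$. Writing $\beta_m:=\log h_{m+1}-\log h_m$ and splitting $\sum_{m=2l}^{2k}\beta_m$ into odd/even parts yields the identity
\begin{equation*}
\log\tilde a_{k,l}=\tfrac12\sum_{j=0}^{k-l-1}\bigl(\beta_{2l+2j+1}-\beta_{2l+2j}\bigr)-\tfrac12\beta_{2k}.
\end{equation*}
From Lemma~\ref{lem:henorm}(i) one reads off $\beta_m=\log(m/N)+O(1/m)+O(N^{-1/2})$ for edge $m$, so $\beta_{2k}\to\log 2$ contributes $-\tfrac12\log 2$, whereas $\beta_{m+1}-\beta_m=O(1/N)$ and the sum has at most $k-l=O(\sqrt{N}\log N)$ terms, which makes it $o(1)$. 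Thus $\tilde a_{k,l}\to 1/\sqrt{2}$ uniformly over edge indices.

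The third step discards the contribution of indices with $\tau_m\le 2(1-\delta_N)$. For such $m$ the critical point $r_{\tau_m}=\sqrt{\tau_m}$ lies well inside $D_1$, so a full (rather than half) Gaussian Laplace estimate gives $h_m\sim\sqrt{2\pi\tau_m/N}\,e^{-m}(m/N)^m$, from which $|w_m(\zeta)|^2$ is seen to be a Gaussian in $\tau_m$ centred at $|\zeta|^2\approx 2$ with width $O(N^{-1/2})$. Evaluated at distance $2\delta_N=2\log N/\sqrt{N}$ this gives $|w_m(\zeta)|\le C N^{1/4}\exp(-c(\log N)^2)$ uniformly, and together with a crude polynomial bound on the remaining factor and on $\tilde a_{k,l}$ (which is summed $O(N^2)$ times) this shows that the non-edge contribution to $c_N G_N$ is super-polynomially small. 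Finally, substituting $2k+1=2N+4s\sqrt{N}$ and $2l=2N+4t\sqrt{N}$ (Riemann step $1/(2\sqrt{N})$ in each of $s,t$) into \eqref{GNsumsh} and assembling the numerical factors,
\begin{equation*}
\frac{\gamma_N^3}{2}\cdot (2\sqrt{N})^2\cdot\Bigl(\frac{N^{1/4}}{\sqrt{2}}\Bigr)^{\!2}\cdot\frac{1}{\sqrt{2}}=\frac{\sqrt{2}}{N^{3/2}}\cdot 4N\cdot\frac{\sqrt{N}}{2}\cdot\frac{1}{\sqrt{2}}=2,
\end{equation*}
one obtains precisely the claimed double integral, with domain $s\in(-\infty,0)$ and $t\in(-\infty,s)$ reflecting the constraints $k\le N-1$ and $l\le k-1$.

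The main obstacle will be obtaining genuinely uniform error estimates in the first step: the cubic-order remainder $O(s^3/\sqrt{N})$ in the Taylor expansion of $m\log\zeta$ must be controlled simultaneously with the $\log\Phi(\xi)$ correction from Lemma~\ref{lem:henorm}(i) all the way out to $s=-\log N$, where $\Phi(2s)\to\sqrt{\pi}$ only algebraically. This is the familiar delicacy of Plancherel--Rotach analysis at a soft/hard transition; once it is in place, the Riemann-sum passage and the exchange of sum and integral are routine.
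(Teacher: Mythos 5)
Your proposal is correct and follows essentially the same three-step Laplace route used in the paper: Step~1 reproduces Lemma~\ref{lem:upperwm} in the parametrisation $\xi_m=2s$, Step~2 (the telescoping $\beta_m$ computation) is a re-packaging of Lemma~\ref{Lem_hk prod}, Step~3 plays the role of Lemmas~\ref{lem:lowerwm}--\ref{Lem_G:asym}, and the Riemann-sum assembly with $\tfrac{\gamma_N^3}{2}\cdot(2\sqrt N)^2\cdot\tfrac{\sqrt N}{2}\cdot\tfrac{1}{\sqrt 2}=2$ matches the paper's final step verbatim. Two cosmetic caveats: for the bulk suppression of $|w_m|$ you invoke a full interior Laplace estimate for $h_m$ where the paper is content with the cruder one-sided lower bound $h_m\gtrsim \delta_N e^{-NV_{\tau_m}(\sqrt 2-c_1\delta_N)}$; and your intermediate claim $\beta_{m+1}-\beta_m=O(1/N)$ is really $O(\log N/N)$ near the edge (the derivative of $e^{-\xi^2}/\Phi(\xi)$ grows like $\log N$ over the relevant range), though this still gives $o(1)$ after summing $O(\sqrt N\log N)$ terms.
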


Using Proposition~\ref{thm:HES}, we first complete the proof of  Theorem~\ref{Thm_NH hE}.
\begin{proof}[Proof of  Theorem~\ref{Thm_NH hE}. ]
Recall that the potential $Q$ is given by \eqref{Q Ginibre HE}.
Then by combining Proposition~\ref{thm:HES} with \eqref{kappa GN} and  \eqref{RNk rescaling}, the theorem follows. Here, we again use the fact that a sequence of cocycles cancels out when forming a Pfaffian.
\end{proof}

We first discard the negligible terms in the sum \eqref{GNsumsh}. Recall that $\delta_N$ and $\tau_m$ are given by \eqref{delta n tau m}.

\begin{lem}\label{lem:upperwm}
For $m$ with $|\tau_m -2|\leq 2\delta_N$ we have
\begin{equation}\label{w:asym}
    w_m(\zeta) = \Big(\frac{N}{4}\Big)^{\frac{1}{4}}(\Phi(\xi_m))^{-\frac{1}{2}}e^{2i\sqrt{N}\im z} e^{z^2 - |z|^2} e^{-2(z - \xi_m/2)^2}(1+o(1)),
\end{equation}
where $\xi_m = \sqrt{N}(\frac{m}{2N}-1)$. Here, $o(1)\to 0$ uniformly for all $z$ in any compact set as $N\to \infty$.
\end{lem}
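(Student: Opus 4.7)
The plan is to expand each factor of $w_m(\zeta) = h_m^{-1/2}\,\zeta^m\,e^{-N|\zeta|^2/2}$ to sufficient order and exploit the hidden cancellation between $h_m^{-1/2}$ and $\zeta^m$. Since $p = \sqrt{2}$ and $\Delta Q \equiv 1$ on the interior of the droplet $D_1$, we have $\delta = 1/2$, so the microscale in \eqref{microscale Laplacian} becomes $\gamma_N = \sqrt{2/N}$ and therefore $\zeta = \sqrt{2}\bigl(1+z/\sqrt{N}\bigr)$. I parametrise $m = 2N + 2\sqrt{N}\,\xi_m$, matching $\xi_m = \sqrt{N}(m/(2N)-1)$ from the statement; the hypothesis $|\tau_m - 2| \leq 2\delta_N$ yields the crucial uniform bound $|\xi_m| \leq \log N$, which controls every Taylor remainder appearing below.

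A direct computation gives $\tfrac{N}{2}|\zeta|^2 = N + 2\sqrt{N}\,\Re z + |z|^2$, and Taylor-expanding $\log(1+u) = u - u^2/2 + O(|u|^3)$ at $u = z/\sqrt{N}$ produces
\begin{equation*}
m\log\!\Bigl(1+\tfrac{z}{\sqrt{N}}\Bigr) = 2\sqrt{N}\,z + 2\xi_m z - z^2 + o(1)
\end{equation*}
uniformly for $z$ in a compact set, since the cubic remainder is $O(m|z|^3/N^{3/2}) = O(1/\sqrt{N})$ and the stray linear correction $(\xi_m/\sqrt{N})z^2 = o(1)$ because $|\xi_m|\leq \log N$. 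These combine into
\begin{equation*}
\zeta^m e^{-N|\zeta|^2/2} = 2^{m/2}\,e^{-N}\exp\!\bigl(2i\sqrt{N}\,\Im z + 2\xi_m z - z^2 - |z|^2 + o(1)\bigr).
\end{equation*}
For the orthogonal norm, Lemma~\ref{lem:henorm}(i) with $\rho = 1$ gives $h_m = e^{-m}\,(m/N)^m\,\tfrac{2}{\sqrt{N}}\,\Phi(\xi_m)(1+o(1))$. The analogous expansion of $m\log(1+\xi_m/\sqrt{N})$, using again $|\xi_m|\leq \log N$ to bound the cubic remainder by $O((\log N)^3/\sqrt{N})$, delivers
\begin{equation*}
(m/N)^m = 2^m\exp\!\bigl(2\sqrt{N}\,\xi_m + \xi_m^2 + o(1)\bigr),
\end{equation*}
and hence
\begin{equation*}
h_m^{-1/2} = (N/4)^{1/4}\,2^{-m/2}\,e^{m/2}\exp\!\bigl(-\sqrt{N}\,\xi_m - \xi_m^2/2\bigr)\,\Phi(\xi_m)^{-1/2}(1+o(1)).
\end{equation*}

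Multiplying the two expressions, the powers $2^{\pm m/2}$ cancel outright, and because $m/2 - N = \sqrt{N}\,\xi_m$ the apparently divergent prefactor $e^{m/2}e^{-N} = e^{\sqrt{N}\,\xi_m}$ cancels against $e^{-\sqrt{N}\,\xi_m}$ from $h_m^{-1/2}$. The surviving Gaussian exponent is
\begin{equation*}
-\xi_m^2/2 + 2\xi_m z - z^2 - |z|^2 = (z^2 - |z|^2) - 2\bigl(z - \xi_m/2\bigr)^2,
\end{equation*}
which together with the phase $e^{2i\sqrt{N}\,\Im z}$ and the prefactor $(N/4)^{1/4}\Phi(\xi_m)^{-1/2}$ reproduces \eqref{w:asym}. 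The main obstacle is the bookkeeping around $e^{\pm\sqrt{N}\,\xi_m}$: since $\xi_m$ can be as large as $\log N$, these factors individually blow up, and must cancel exactly in the product. This forces both Taylor expansions of $\log(1+\cdot)$ to be carried one order beyond the naive leading term, and the remainders to be bounded uniformly in $m$ via $|\xi_m|\leq \log N$ and in $z$ via compactness.
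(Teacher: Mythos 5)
Your proof is correct and follows essentially the same route as the paper: both rely on Lemma~\ref{lem:henorm}(i) for the asymptotics of $h_m$, extract the phase $e^{2i\sqrt{N}\im z}$ and the Gaussian factor via expansion of $N|\zeta|^2/2$ and $m\log\zeta$ around the critical point $r_{\tau_m}=\sqrt{m/N}$, and use the bound $|\xi_m|\le\log N$ to make the remainders uniformly $o(1)$. The only cosmetic difference is that the paper first applies the identity $|\zeta|^2 = \zeta^2 + \tfrac{2}{N}(|z|^2-z^2) - \tfrac{4i}{\sqrt{N}}\im z$ and then Taylor-expands $V_{\tau_m}(\zeta)$, whereas you expand $m\log(1+z/\sqrt{N})$ and $(m/N)^m$ directly and track the cancellation of $e^{\pm\sqrt{N}\xi_m}$ explicitly; the two computations are algebraically equivalent.
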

\begin{proof}
By Lemma \ref{lem:henorm} (i), we have
\begin{equation*}
    w_m(\zeta) = \Big(\frac{N}{4}\Big)^{\frac{1}{4}}(\Phi(\xi_m))^{-\frac{1}{2}}e^{NV_{\tau_m}(r_{\tau_m})/2}e^{-N(|\zeta|^2 - \frac{2m}{N}\log \zeta)/2}(1+o(1)),
\end{equation*}
where $V_{\tau}$ is given in \eqref{Vtau} and $r_{\tau_m} = \sqrt{\frac{m}{N}}$. Since
\begin{equation*}
|\zeta|^2 = |p+\gamma_Nz|^2 = (p+\gamma_N z)^2 + \gamma_N^2(|z|^2 - z^2) - 2ip\gamma_N \im z = \zeta^2 + \frac{2}{N}(|z|^2-z^2)- \frac{4i}{\sqrt{N}}\im z ,
\end{equation*}
we obtain that
\begin{align*}
    e^{-N(|\zeta|^2 - \frac{2m}{N}\log \zeta)/2} = e^{z^2 - |z|^2} e^{2i\sqrt{N}\im z} e^{-N(\zeta^2 - \frac{2m}{N}\log \zeta)/2}.
\end{align*}
Now the lemma follows from straightforward computations using the Taylor series expansion at $r=r_{\tau_m}$
\begin{equation*}
    \zeta^2 - \frac{2m}{N}\log \zeta = V_{\tau_m}(r_{\tau_m}) + 2(\zeta - r_{\tau_m})^2 + O(|\zeta-r_{\tau_m}|^3)
\end{equation*}
and the following asymptotic expansion
\begin{equation*}
    \zeta - r_{\tau_m} = p+\gamma_Nz - r_{\tau_m} = \sqrt{\frac{2}{N}}z + \sqrt{2}\Big(1-\sqrt{\frac{m}{2N}}\Big)=
    \sqrt{\frac{2}{N}}z + \frac{1}{\sqrt{2}}\Big(1-{\frac{m}{2N}}\Big)
    +O(\delta_N^2).
\end{equation*}
\end{proof}

\begin{lem}\label{lem:lowerwm}
Let $K$ be a compact subset of $\H_{-}$. Then there exist positive constants $c$ and $C$ such that for all $m$ with $m \leq 2N(1-\delta_N)$ and for all $z \in K$,
\begin{equation}
|w_m(\sqrt{2}+\gamma_N z)|^2 \leq Ce^{-c(\log N)^2}.
\end{equation}
\end{lem}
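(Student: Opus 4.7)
The plan is to show directly that $\zeta := \sqrt{2}+\gamma_N z$ sits so many standard deviations away from the mode $r_{\tau_m}:=\sqrt{m/N}$ of the weighted polynomial $|w_m|^2$ that Laplace--type estimates force exponential decay of order $e^{-c(\log N)^2}$. The starting point is the identity
\begin{equation}
|w_m(\zeta)|^2 = h_m^{-1}\,|\zeta|^{2m} e^{-N|\zeta|^2} = h_m^{-1}\,\exp\!\bigl(-N V_{\tau_m}(|\zeta|)\bigr),
\end{equation}
with $\tau_m=m/N$ and $V_\tau$ as in \eqref{Vtau}. So it suffices to show that
\begin{equation}
N\bigl[V_{\tau_m}(|\zeta|) - V_{\tau_m}(r_{\tau_m})\bigr] - \log\!\bigl(h_m^{-1} e^{-N V_{\tau_m}(r_{\tau_m})}\bigr) \;\gtrsim\;(\log N)^2,
\end{equation}
uniformly in $m \le 2N(1-\delta_N)$ and $z\in K$.

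For the prefactor one only needs a crude lower bound on $h_m$: because $\tau_m \le 2-2\delta_N$, the critical point $r_{\tau_m}$ lies at distance at least $c\delta_N$ inside the disk $D_1$, so Laplace's method applied to $h_m=\int_0^{\sqrt{2}}2r\cdot r^{2m}e^{-Nr^2}\,dr$ (or simply restricting the integral to $[r_{\tau_m}-1/\sqrt{N},\,r_{\tau_m}+1/\sqrt{N}]$) yields
\begin{equation}
h_m \;\ge\; c\,N^{-A}\,e^{-N V_{\tau_m}(r_{\tau_m})}
\end{equation}
for a fixed polynomial power $A$, uniformly in the allowed $m$. Any such polynomial loss will be harmless, since it will be absorbed into the exponential bound $e^{-c(\log N)^2}$ at the end.

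The heart of the argument is the deterministic inequality
\begin{equation}
V_\tau(r) - V_\tau(r_\tau) \;=\; \tau\bigl(u-1-\log u\bigr) \;\ge\; \frac{(|\zeta|^2-\tau)^2}{2|\zeta|^2}, \qquad u:=\tfrac{|\zeta|^2}{\tau}\ge 1,
\end{equation}
which follows from the elementary bound $u-1-\log u \ge (u-1)^2/(2u)$ for $u\ge 1$. For $z\in K\subset\H_-$ one has $|\zeta|^2 = 2+O(N^{-1/2})$, so $|\zeta|^2\in[3/2,5/2]$ for large $N$, and the restriction $\tau_m \le 2-2\delta_N$ gives
\begin{equation}
|\zeta|^2 - \tau_m \;\ge\; 2\delta_N - \tfrac{C_K}{\sqrt{N}} \;\ge\; \delta_N
\end{equation}
for $N$ sufficiently large (uniformly in $z\in K$), which also verifies $u\ge 1$ needed above. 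Substituting yields
\begin{equation}
N\bigl[V_{\tau_m}(|\zeta|)-V_{\tau_m}(r_{\tau_m})\bigr] \;\ge\; \frac{N\delta_N^2}{6} \;=\; \frac{(\log N)^2}{6},
\end{equation}
and combining with the lower bound on $h_m$ gives $|w_m(\zeta)|^2 \le CN^A e^{-(\log N)^2/6} \le C' e^{-c'(\log N)^2}$ after absorbing the polynomial factor, which is exactly the claim.

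The main obstacle I anticipate is making the Laplace estimate on $h_m$ uniform across the whole range $m\in\{1,\ldots,2N(1-\delta_N)\}$: for $m$ close to the upper end the critical point is only a distance of order $\delta_N$ from the hard edge at $\sqrt{2}$, while for $m$ of order one the critical point sits near $0$ where the measure is degenerate. Neither regime is covered directly by Lemma~\ref{lem:henorm}, so a short separate argument (or splitting into cases $\tau_m\le 1$ and $1\le\tau_m\le 2-2\delta_N$) is needed; however, only a polynomial lower bound on $h_m$ is required, which is why this step does not interact with the exponential gain produced by the key convexity inequality.
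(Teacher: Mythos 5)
Your approach is correct in outline, but it takes a genuinely different route from the paper, and the one place you flag as a potential obstacle is exactly where the two routes diverge. You compare $V_{\tau_m}(|\zeta|)$ to $V_{\tau_m}(r_{\tau_m})$, paying for this with a uniform lower bound $h_m\ge cN^{-A}e^{-NV_{\tau_m}(r_{\tau_m})}$; the paper instead never touches $r_{\tau_m}$ and compares $V_{\tau_m}$ at two points that both sit near $\sqrt2$. Concretely, the paper chooses $c_1>0$ with $r_{\tau_m}\le \sqrt2-2c_1\delta_N$ for all admissible $m$, uses only the monotonicity of $V_{\tau_m}$ on $[r_{\tau_m},\infty)$ to get the crude bound
\begin{equation}
h_m \ \ge\ \int_{\sqrt2-2c_1\delta_N}^{\sqrt2-c_1\delta_N} 2r\,e^{-NV_{\tau_m}(r)}\,dr \ \ge\ C_1\delta_N\,e^{-NV_{\tau_m}(\sqrt2-c_1\delta_N)},
\end{equation}
and is then left with $|w_m(\zeta)|^2\le C_2\delta_N^{-1}\,e^{-N(V_{\tau_m}(|\zeta|)-V_{\tau_m}(\sqrt2-c_1\delta_N))}$, where the exponent of order $N\delta_N^2=(\log N)^2$ comes from $V'_{\tau_m}\gtrsim\delta_N$ on the short interval $[\sqrt2-c_1\delta_N,\ |\zeta|]$. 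This sidesteps any Laplace asymptotics of $h_m$ and therefore the uniformity issue across the full range $1\le m\le 2N(1-\delta_N)$ that you identify. If you want to keep your version, the acknowledged gap is genuinely there but is fillable: write $h_m=N^{-(m+1)}\gamma(m+1,2N)$ as in Lemma~\ref{Lem_G:asym}, note that $\gamma(m+1,2N)/\Gamma(m+1)=\P[X_N\ge m+1]\ge c$ uniformly for $m\le 2N(1-\delta_N)$ with $X_N\sim\mathrm{Poisson}(2N)$, and use Stirling to compare $m!$ with $m^me^{-m}$, giving $h_m\ge c\sqrt{m+1}/N\cdot e^{-NV_{\tau_m}(r_{\tau_m})}$, i.e.\ $A=1$ works. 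Your key convexity estimate $V_\tau(r)-V_\tau(r_\tau)=\tau(u-1-\log u)\ge(r^2-\tau)^2/(2r^2)$ with $u=r^2/\tau\ge1$ is correct and is a perfectly natural way to extract the $(\log N)^2$ gain; the paper's choice is simply more economical because it never requires the $h_m$ estimate to be uniform near the edge or near the origin.
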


\begin{proof}
For $m \leq 2N(1-\delta_N)$, the critical point $r_{\tau_m}$ of $V_{\tau_m}$ satisfies
\begin{equation}
    r_{\tau_m} = \sqrt{\tau_{m}} \leq \sqrt{2(1-\delta_N)} = \sqrt{2}\Big(1-\frac{1}{2}\delta_N\Big)+O(\delta_N^2), \qquad N \to \infty.
\end{equation}
Choose a constant $c_1>0$ such that $r_{\tau_m}\leq \sqrt{2} - 2c_1\delta_N$.
Since $V'_{\tau_m}(r)\geq 0$ for $r\geq r_{\tau_m}$, we obtain
\begin{equation}
    h_m  \geq \int_{\sqrt{2}-2c_1\delta_N}^{\sqrt{2}-c_1\delta_N} 2r  e^{-NV_{\tau_m}(r)} dr \geq C_1\delta_N e^{-NV_{\tau_m}(\sqrt{2}-c_1\delta_N)}
\end{equation}
for some constant $C_1$.
By \eqref{omega m}, this gives that for $z \in K$,
\begin{equation*}
    |w_m(\sqrt{2}+\gamma_N z)|^2 \leq C_2 \delta_N^{-1} e^{-N(V_{\tau_m}(|\sqrt{2}+\gamma_N z|)-V_{\tau_m}(\sqrt{2}-c_1\delta_N))}\leq C e^{-c(\log N)^2}
\end{equation*}
for some constants $c$ and $C$.
\end{proof}

Lemma \ref{lem:lowerwm}  asserts that for $m\leq 2N(1-\delta_N)$, $w_m$ is negligible in the sum \eqref{GNsumsh}. Indeed, we obtain the following lemma.

\begin{lem} \label{Lem_G:asym}
Let $K$ be a compact subset of $\mathbb{H}_-$. Then for $z,w\in K$, we have
\begin{equation}\label{G:asym}
    G_N(z,w) \sim \frac{\gamma_N^3}{2}\sum_{ k=N(1-\delta_N)}^{N-1} w_{2k+1}(\zeta) \Big[ \Big(\frac{h_{2k}}{h_{2k+1}}\Big)^{\frac{1}{2}}w_{2k}(\eta) + \sum_{l=N(1-\delta_N)}^{k-1}\Big(\frac{h_{2l}}{h_{2k+1}}\Big)^{\frac{1}{2}}\prod_{j=0}^{k-l-1}\frac{h_{2l+2j+2}}{h_{2l+2j+1}}w_{2l}(\eta) \Big].
\end{equation}
\end{lem}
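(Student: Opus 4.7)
The plan is to decompose the sum in \eqref{GNsumsh} as
\begin{equation*}
G_N(z,w) = M_N(z,w) + R_N^{(1)}(z,w) + R_N^{(2)}(z,w),
\end{equation*}
where $M_N$ is the right-hand side of \eqref{G:asym}, $R_N^{(1)}$ collects all terms with $k < N(1-\delta_N)$, and $R_N^{(2)}$ collects the terms with $k \geq N(1-\delta_N)$ but $l < N(1-\delta_N)$ in the inner sum. Note that the $l = k$ summand is automatically in $M_N$ whenever $k \geq N(1-\delta_N)$, since $l = k < N(1-\delta_N)$ would contradict $k \ge N(1-\delta_N)$. The goal is then to show that both remainders are $o(1)$ uniformly for $z,w$ in the prescribed compact set $K \subset \mathbb{H}_-$.

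To estimate $R_N^{(1)}$, I apply Lemma~\ref{lem:lowerwm} to $w_{2k+1}(\zeta)$ in each outer summand, obtaining $|w_{2k+1}(\zeta)| \leq C e^{-c(\log N)^2/2}$; to estimate $R_N^{(2)}$, I apply the same lemma to $w_{2l}(\eta)$ in each inner summand. What remains is to bound the coefficient
\begin{equation*}
C_{l,k} := \Big(\frac{h_{2l}}{h_{2k+1}}\Big)^{1/2} \prod_{j=0}^{k-l-1}\frac{h_{2l+2j+2}}{h_{2l+2j+1}}
\end{equation*}
(with empty product equal to $1$) uniformly in $0 \leq l \leq k \leq N-1$. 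The idea is to extend the Laplace-method estimate underlying Lemma~\ref{lem:henorm} to all $m$ with $\tau_m$ bounded away from $0$: since the critical point $r_{\tau_m} = \sqrt{\tau_m}$ lies strictly inside the droplet $D_1$ whenever $\tau_m < 2$, the hard wall is invisible at leading order and one obtains
\begin{equation*}
h_m \asymp \frac{1}{\sqrt{N}} e^{-NV_{\tau_m}(r_{\tau_m})} = \frac{1}{\sqrt{N}} e^{-m}\Big(\frac{m}{N}\Big)^m
\end{equation*}
with constants independent of $m$. From this the ratio $h_{m+1}/h_m$ is uniformly bounded for $m \leq 2N-1$, and inserting Stirling's formula into both factors of $C_{l,k}$ shows that their logarithms cancel to leading order, giving $C_{l,k} = O(N^A)$ for some fixed $A$, uniformly in $l,k$.

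Combining these ingredients with the bound $|w_m(\eta)| = O(N^{1/4})$ coming from Lemma~\ref{lem:upperwm} (used for the factor that is not forced to be exponentially small), each remainder satisfies
\begin{equation*}
|R_N^{(j)}(z,w)| \leq \gamma_N^3 \cdot O(N^2) \cdot O(N^{A+1/4}) \cdot e^{-c(\log N)^2/2} = o(1), \qquad j=1,2,
\end{equation*}
since the factor $e^{-c(\log N)^2/2}$ beats any polynomial in $N$. This yields \eqref{G:asym}. The main obstacle is the uniform polynomial bound on $C_{l,k}$ when $k - l$ is comparable to $N$: each of the two factors defining $C_{l,k}$ is then individually of size exponential in $N$, and only a careful cancellation between them — essentially reflecting that the weighted skew-orthogonal polynomial $q_{2k}(\zeta)e^{-N|\zeta|^2/2}/\sqrt{h_{2k+1}}$ behaves like an orthonormal function — produces the polynomial bound. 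Carrying out this telescoping argument with Stirling's formula is the delicate step, while the negligibility of $R_N^{(1)}$ and $R_N^{(2)}$ afterwards is immediate.
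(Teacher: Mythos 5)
Your decomposition $G_N = M_N + R_N^{(1)} + R_N^{(2)}$ and your use of Lemma~\ref{lem:lowerwm} (exponential smallness of $w_m$ for $m \le 2N(1-\delta_N)$) together with Lemma~\ref{lem:upperwm} (the $O(N^{1/4})$ bound for near-edge indices) match the paper's strategy exactly. The place where you deviate is the coefficient bound $C_{l,k}=O(N^A)$, which you leave as the ``delicate step'' with a Stirling/Laplace cancellation sketch. The paper has a much cleaner way to do this. It writes $h_m = N^{-(m+1)}\gamma(m+1,2N)$ and invokes the identity $\Gamma(m+1)-\gamma(m+1,2N)=\Gamma(m+1)\,\P[X_N\le m]$ for $X_N\sim\mathrm{Poisson}(2N)$, so that for every $m<2N(1-\delta_N)$ the normal approximation gives $h_m/h_{m+1}=\tfrac{N}{m+1}\bigl(1+O(e^{-c(\log N)^2})\bigr)$ in one stroke, uniformly all the way down to $m=0$. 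It then rewrites
\begin{equation*}
\sqrt{\frac{h_{2l}}{h_{2k+1}}}\prod_{j=0}^{k-l-1}\frac{h_{2l+2j+2}}{h_{2l+2j+1}}
=\sqrt{\frac{h_{2k}}{h_{2k+1}}}\prod_{j=0}^{k-l-1}\sqrt{\frac{h_{2l+2j+2}\,h_{2l+2j}}{h_{2l+2j+1}^{2}}},
\end{equation*}
so that each factor under the product becomes $\sqrt{\frac{m+2}{m+1}}\bigl(1+O(e^{-c(\log N)^2})\bigr)$ with $m=2l+2j$, rendering the cancellation you describe as ``essentially reflecting that the weighted polynomial behaves like an orthonormal function'' completely manifest and immediately giving the $O(N)$ bound. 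Two small points worth flagging: your asymptotic $h_m\asymp N^{-1/2}e^{-m}(m/N)^m$ with constants independent of $m$ is only valid for $\tau_m$ bounded away from $0$ (for $m\ll N$ the correct prefactor is $\sqrt{m}/N$, not $N^{-1/2}$), so a Stirling argument has to treat the small-$m$ tail separately, whereas the incomplete-gamma identity handles every $m<2N(1-\delta_N)$ uniformly; and you should verify that the multiplicative error $(1+O(\cdot))$ accumulated over up to $O(N)$ factors stays polynomially bounded, which your sketch does not make explicit but which follows at once from the paper's $O(e^{-c(\log N)^2})$ error. In short: same route, your approach to the coefficient bound is workable but considerably heavier than the paper's probabilistic shortcut, and it is exactly the step you left vague.
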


\begin{proof}
We first mention that the number of summands $N \delta_N= N^{1/2} \log N \to \infty$ as $N \to \infty$.
For each $m$, the norm $h_m$ can be expressed in terms of the lower incomplete function $\gamma$ as
\begin{align*}
    h_{m} = \int_{0}^{\sqrt{2}} 2r^{2m+1} e^{-Nr^2}\, dr = N^{-(m+1)}\gamma(m+1,2N).
\end{align*}
It is well known that if $X_N$ is a $\mathrm{Poisson}(2N)$ random variable, then
\begin{equation*}
    \frac{\Gamma(m+1) - \gamma(m+1,2N)}{\Gamma(m+1)} = \P[X_N<m+1].
\end{equation*}
For all $m<2N(1-\delta_N)$, the normal approximation of Poisson random variables gives
\begin{equation*}
    \P[X_N\leq m] \leq \P\Big[\frac{X_N-2N}{\sqrt{2N}}\leq - \sqrt{2}\log N\Big]= \P[Z \leq -\sqrt{2}\log N](1+o(1)),
\end{equation*}
where $Z$ is a standard normal random variable.
Note that $\P[Z \leq -2 \log N] = O(e^{-c(\log N)^2})$ for some constant $c$. This implies that there exists a constant $c'$ such that
\begin{equation}
 \frac{h_{m}}{h_{m+1}}=\frac{N}{m+1}(1+O(e^{-c'(\log N)^2})), \qquad   \frac{h_{m}}{h_{m+1}}\frac{h_{m+2}}{h_{m+1}} = \frac{m+2}{m+1}(1+O(e^{-c'(\log N)^2})),
\end{equation}
where $O(e^{-c'(\log N)^2}) \to 0 $ uniformly for $m$ as $N\to \infty$. This implies that
$   {h_{m}}/{h_{m+1}} = O(N) $
and for $k < N(1-\delta_N)$
\begin{equation}
     \sqrt{\frac{{h_{2l}}}{{h_{2k+1}}}}\prod_{j=0}^{k-l-1}\frac{h_{2l+2j+2}}{h_{2l+2j+1}} = \sqrt{\frac{h_{2k}}{h_{2k+1}}}\prod_{j=0}^{k-l-1}\sqrt{\frac{h_{2l+2j+2}}{h_{2l+2j+1}}}\sqrt{\frac{h_{2l+2j}}{h_{2l+2j+1}}} = O(N).
\end{equation}

Combining all of the above with Lemma \ref{lem:lowerwm}, we obtain
\begin{equation}
    \sum_{k=0}^{N(1-\delta_N)} w_{2k+1}(\zeta)\Big[ \Big(\frac{h_{2k}}{h_{2k+1}}\Big)^{\frac{1}{2}}w_{2k}(\eta) + \sum_{l=0}^{k-1}\Big(\frac{h_{2l}}{h_{2k+1}}\Big)^{\frac{1}{2}}\prod_{j=0}^{k-l-1}\frac{h_{2l+2j+2}}{h_{2l+2j+1}}w_{2l}(\eta) \Big] = o(1)
\end{equation}
uniformly for $z,w \in K$. Similarly, we obtain
\begin{equation}
    \sum_{k=N(1-\delta_N)}^{N-1} w_{2k+1}(\zeta)  \sum_{l=0}^{N(1-\delta_N)}\Big(\frac{h_{2l}}{h_{2k+1}}\Big)^{\frac{1}{2}}\prod_{j=0}^{k-l-1}\frac{h_{2l+2j+2}}{h_{2l+2j+1}}w_{2l}(\eta) = o(1),
\end{equation}
which completes the proof.
\end{proof}

We now show the following.
\begin{lem} \label{Lem_hk prod}
For $k,l$ with $N (1-\delta_N) \leq k,l \leq N-1$, we have
\begin{equation}
 \frac{1}{\sqrt{h_{2k+1}}} = \frac{1}{\sqrt{2h_{2k}}} (1+o(1)), \qquad
    \frac{1}{\sqrt{h_{2k+1}}}\prod_{j=0}^{k-l-1}\frac{h_{2l+2j+2}}{h_{2l+2j+1}} = \frac{1}{\sqrt{2h_{2l}}}(1+o(1)).
\end{equation}
\end{lem}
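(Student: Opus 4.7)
The first identity follows directly from Lemma \ref{lem:henorm}(i). For $m$ with $|\tau_m-2|\le 2\delta_N$, inserting $h_m=e^{-m}(m/N)^m(2/\sqrt{N})\Phi(\xi_m)(1+o(1))$ into $h_{2k+1}/h_{2k}$, the factors $(1+1/(2k))^{2k}/e$, $(2k+1)/N$, and $\Phi(\xi_{2k+1})/\Phi(\xi_{2k})$ tend respectively to $1$, $2$, and $1$; the last because $\xi_{2k+1}-\xi_{2k}=1/(2\sqrt{N})$ and $\Phi$ is bounded below on the relevant range $\xi\in[-\log N, 0]$. Thus $h_{2k+1}/h_{2k}=2(1+o(1))$, and the first identity follows upon taking inverse square roots.

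For the second identity, the difficulty is that the product
\begin{equation*}
\prod_{j=0}^{K-1}\frac{h_{2l+2j+2}}{h_{2l+2j+1}}, \qquad K:=k-l\le N\delta_N=\sqrt{N}\log N,
\end{equation*}
contains too many factors for the naive bound $h_{m+1}/h_m=2(1+o(1))$ to be useful in controlling the accumulated error. My plan is to use the algebraic factorization
\begin{equation*}
\frac{h_{m+1}}{h_m}=\sqrt{\frac{h_{m+2}}{h_m}}\cdot\sqrt{\frac{h_{m+1}^2}{h_m h_{m+2}}}
\end{equation*}
and to prove $h_{m+1}^2/(h_m h_{m+2})=1+O(\log N/N)$ uniformly in $m$. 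Granting this, the telescoping identity $\prod_{j=0}^{K-1}(h_{2l+2j+3}/h_{2l+2j+1})=h_{2k+1}/h_{2l+1}$ together with $\prod_j(1+O(\log N/N))=\exp(O((\log N)^2/\sqrt{N}))=1+o(1)$ yields
\begin{equation*}
\prod_{j=0}^{K-1}\frac{h_{2l+2j+2}}{h_{2l+2j+1}}=(1+o(1))\sqrt{\frac{h_{2k+1}}{h_{2l+1}}},
\end{equation*}
and combining with $h_{2l+1}=2h_{2l}(1+o(1))$ from the first identity finishes the second.

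The main obstacle is the quantitative estimate $h_{m+1}^2/(h_m h_{m+2})=1+O(\log N/N)$. To establish it, I would use the Poisson representation $h_m=(m!/N^{m+1})\cdot P_{m+1}$ with $P_m:=\mathbb{P}[X_N\ge m]$ for $X_N\sim\mathrm{Poisson}(2N)$, which gives
\begin{equation*}
\frac{h_{m+1}^2}{h_m h_{m+2}}=\frac{m+1}{m+2}\cdot\frac{P_{m+2}^2}{P_{m+1}P_{m+3}}.
\end{equation*}
The first factor contributes $1+O(1/N)$. For the second, write $P_{m+1}/P_m=1-q_m$ with $q_m:=\mathbb{P}[X_N=m]/P_m$; the local central limit theorem together with $\Phi(\xi_m)$ being bounded below on the relevant range gives $q_m=O(1/\sqrt{N})$ uniformly, and the Poisson recursion $\mathbb{P}[X_N=m+1]=\mathbb{P}[X_N=m]\cdot 2N/(m+1)=\mathbb{P}[X_N=m](1+O(\delta_N))$ then yields $q_{m+1}-q_m=q_m\cdot O(\delta_N)=O(\log N/N)$, from which $P_{m+2}^2/(P_{m+1}P_{m+3})=1+O(\log N/N)$ follows. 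This second-order cancellation goes beyond the pointwise $o(1)$ relative error in Lemma \ref{lem:henorm}(i) and is the technical heart of the argument.
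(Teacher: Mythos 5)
Your proof is correct and follows essentially the same structure as the paper's: decompose the product into a telescoping part and a product of second-order correction factors $h_{m+1}^2/(h_m h_{m+2})$, and control the latter uniformly. The paper pairs even indices around an odd one, writing the left-hand side as $\sqrt{h_{2k}/(h_{2k+1}h_{2l})}\prod_j\sqrt{h_{2l+2j}h_{2l+2j+2}/h_{2l+2j+1}^2}$, while you pair odd indices around an even one via $h_{m+1}/h_m = \sqrt{h_{m+2}/h_m}\cdot\sqrt{h_{m+1}^2/(h_m h_{m+2})}$ and telescope the first factor; these are algebraically equivalent by a shift, and both reduce to the same key estimate. Where you genuinely diverge is in how you derive that estimate: the paper works from the ratio identity \eqref{eq:shratio} and the mean value theorem applied to $\Phi$, arriving at $1+O(N^{-1})$; you instead pass to the Poisson representation $h_m = (m!/N^{m+1}) P_{m+1}$, set $q_m = \mathbb{P}[X_N=m]/P_m = O(N^{-1/2})$, and use the one-step Poisson recursion $\mathbb{P}[X_N=m+1] = \mathbb{P}[X_N=m]\cdot 2N/(m+1)$ to get $q_{m+1}-q_m = O(\log N/N)$ and hence $h_{m+1}^2/(h_m h_{m+2}) = (1-q_{m+1})/(1-q_{m+2})\cdot(1+O(N^{-1})) = 1+O(\log N/N)$. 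Your bound is marginally weaker than the paper's claimed $O(N^{-1})$, but since there are only $K\le\sqrt{N}\log N$ factors, $\exp(O((\log N)^2/\sqrt{N}))=1+o(1)$ still suffices; the Poisson-recursion derivation has the merit of making the required second-order cancellation fully explicit and avoids any appeal to uniformity of the $o(1)$ errors in the $\Phi$-asymptotics.
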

\begin{proof}
Similar to the proof of Lemma \ref{Lem_G:asym}, the ratio of the norms can be estimated by
\begin{equation}\label{eq:shratio}
    \frac{h_{m+1}}{h_m} = \frac{m+1}{N}\cdot\frac{1-\P [X_N\leq m+1]}{1-\P[X_N \leq m]}=\frac{m+1}{N}\Big(1-\frac{\Phi(\xi_m)-\Phi(\xi_{m+1})}{\Phi(\xi_{m})}(1+o(1))\Big)
\end{equation}
as $m,N\to \infty$ with $|m-2N|
\leq 2N\delta_N$. Here, $X_N \sim \mathrm{Poisson}(2N)$ and $\Phi$, $\xi_m$ are defined in \eqref{Phi erfc} and Lemma \ref{lem:henorm} (i).
Since there exists $c_{m} \in (\xi_m, \xi_{m+1})$ such that
\begin{equation}\label{eq:Phidiff}
    \Phi(\xi_m) - \Phi(\xi_{m+1}) = \frac{1}{2\sqrt{N}}e^{-c_{m}^2}
\end{equation}
by the mean value theorem, we have for $k$ with $N(1-\delta_N)\leq k \leq N-1$,
\begin{equation}
    \frac{h_{2k+1}}{h_{2k}} = 2 \cdot (1+o(1)),
\end{equation}
where $o(1)\to 0$ uniformly for $m$ as $N\to \infty$.
This gives the first assertion.

For the second asymptotic equation, we obtain from \eqref{eq:shratio} and \eqref{eq:Phidiff} that
\begin{equation*}
    \frac{h_{m}}{h_{m+1}}\frac{h_{m+2}}{h_{m+1}}   =  1+ O(N^{-1})
\end{equation*}
for $m$ with $2N(1-\delta_N) \leq m \leq 2N$.
This gives
\begin{equation*}
    \frac{1}{\sqrt{h_{2k+1}}}\prod_{j=0}^{k-l-1}\frac{h_{2l+2j+2}}{h_{2l+2j+1}} = \sqrt{\frac{h_{2k}}{h_{2k+1}h_{2l}}}\prod_{j=0}^{k-l-1}\sqrt{\frac{h_{2l+2j+2}}{h_{2l+2j+1}}}\sqrt{\frac{h_{2l+2j}}{h_{2l+2j+1}}} = \frac{1}{\sqrt{2h_{2l}}} (1+o(1)),
\end{equation*}
which yields the desired equation.
\end{proof}

We are now ready to show Proposition~\ref{thm:HES}.

\begin{proof}[Proof of Proposition~\ref{thm:HES}]
By Lemmas~\ref{lem:upperwm}, \ref{Lem_G:asym} and ~\ref{Lem_hk prod}, we have
\begin{equation}
    G_N(z,w) \sim \frac{e^{2i\sqrt{N}\im (z+ w)}}{2N}  e^{z^2 - |z|^2 + w^2 - |w|^2} \sum_{k=N(1-\delta_N)}^{N-1} \frac{e^{-2(z - \xi_{2k+1}/2)^2}}{\sqrt{\Phi(\xi_{2k+1})}}\sum_{l=N(1-\delta_N)}^{k}\frac{e^{-2(w-\xi_{2l}/2)^2}}{\sqrt{\Phi(\xi_{2l})}}.
\end{equation}
Then the proposition follows from the Riemann sum approximation.
\end{proof}

\subsection{Non-Hermitian hard edge scaling limit}
\label{Subsec_H edge}

This subsection is devoted to the proof of Theorem~\ref{Thm_NH hW}.
As in the previous subsection, by \eqref{kappa GN}, it is enough to show the following proposition.

\begin{prop}\label{thm:HWS}
There exists a sequence of cocycles $(c_N)_{N \ge 1}$ such that
\begin{equation*}
    \lim_{N\to\infty} c_N(z,w)\,G_N(z,w) =  \int_{0}^{1} s^{\frac{1}{2}} e^{2sz} \int_{0}^{s} t^{\frac{1}{2}} e^{2tw} \,dt \, ds,
\end{equation*}
where the convergence is uniform for $z,w$ in any compact subset of $\H_{-}$.
\end{prop}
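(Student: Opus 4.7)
\medskip

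\textbf{Proof plan for Proposition~\ref{thm:HWS}.} The plan is to follow the blueprint established for the soft/hard edge case in Subsection~\ref{Subsec_SH edge}, but with Lemma~\ref{lem:hardwallnorm}(ii) replacing Lemma~\ref{lem:henorm}(i) to handle the new boundary condition. Since the equilibrium measure has a non-trivial singular component on $\partial D_\rho$, the dominant monomials are those with degrees $m$ satisfying $\tau_m \geq 2\rho^2+\delta_N$, rather than the narrow window $|\tau_m-2|\le 2\delta_N$ used in the free case. First I would prove the analogue of Lemma~\ref{lem:lowerwm}: for $m\le 2N(\rho^2+\delta_N)$ and $z$ in a compact subset $K\subset \H_-$, one has $|w_m(\sqrt{2}\rho+\gamma_N z)|^2\le Ce^{-c(\log N)^2}$. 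The standard argument goes through since $V'_{\tau_m}(r)>0$ for $r>r_{\tau_m}=\sqrt{\tau_m}$, and $\sqrt{2}\rho-r_{\tau_m}$ is bounded below by $c\delta_N$ in this regime, so an elementary lower bound for $h_m$ combined with the Taylor expansion of $V_{\tau_m}$ at $\sqrt{2}\rho$ gives the required decay.

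Next I would compute, for $m$ in the dominant range $2N(\rho^2+\delta_N)\le m\le 2N$, the asymptotics of $w_m(\zeta)$ with $\zeta=\sqrt{2}\rho+\gamma_N z$. Writing $\zeta^m e^{-N|\zeta|^2/2}=e^{-\frac{N}{2}V_{\tau_m}(|\zeta|)}e^{im\arg\zeta}$, a first-order Taylor expansion of $V_{\tau_m}$ around $r_*=\sqrt{2}\rho$ (note that $V'_{\tau_m}(r_*)=\sqrt{2}(2\rho^2-\tau_m)/\rho \neq 0$, so only the linear term matters, unlike the bulk/soft-hard case) yields
\begin{equation}
\tfrac{N}{2}V_{\tau_m}(|\zeta|)=\tfrac{N}{2}V_{\tau_m}(r_*)-2s\re z+o(1), \qquad s:=\tfrac{\tau_m-2\rho^2}{2(1-\rho^2)}\in[0,1].
\end{equation}
Combining this with Lemma~\ref{lem:hardwallnorm}(ii) gives the clean prefactor
\begin{equation}
h_m^{-1/2}e^{-NV_{\tau_m}(r_*)/2}=\sqrt{\tfrac{N(\tau_m-2\rho^2)}{2\rho^2}}(1+o(1))=\sqrt{\tfrac{N(1-\rho^2)s}{\rho^2}}(1+o(1)).
\end{equation}
For the phase, using $\arg\zeta=\gamma_N\im z/(\sqrt{2}\rho)+O(\gamma_N^2)$, one splits $im\arg\zeta=i\cdot 2N\rho^2\arg\zeta+i\cdot 2N(1-\rho^2)s\arg\zeta$, the first piece being $z$-dependent but $k$-independent (and unimodular, anti-symmetric under $z\mapsto\bar z$, hence a legitimate cocycle), the second converging to $2is\im z$. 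Together with $e^{2s\re z}$ this assembles into $e^{2sz}$.

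The third step is to analyse the ratios. From Lemma~\ref{lem:hardwallnorm}(ii) one reads off $h_{m+1}/h_m=2\rho^2\cdot(\tau_m-2\rho^2)/(\tau_{m+1}-2\rho^2)=2\rho^2(1+O(1/(N(\tau_m-2\rho^2))))$, which is $2\rho^2(1+o(1))$ uniformly over the dominant range since $\tau_m-2\rho^2\ge 2\delta_N\gg 1/N$. Hence $\sqrt{h_{2k}/h_{2k+1}}=(2\rho^2)^{-1/2}(1+o(1))$ and, telescoping,
\begin{equation}
\sqrt{\tfrac{h_{2l}}{h_{2k+1}}}\prod_{j=0}^{k-l-1}\tfrac{h_{2l+2j+2}}{h_{2l+2j+1}}=(2\rho^2)^{-(2k-2l+1)/2}\cdot(2\rho^2)^{k-l}(1+o(1))=(2\rho^2)^{-1/2}(1+o(1)),
\end{equation}
uniformly over $l,k$ in the dominant range. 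Thus the coefficient in front of each $w_{2k+1}(\zeta)w_{2l}(\eta)$-product collapses to the single constant $1/\sqrt{2\rho^2}$, which is the hard-edge analogue of Lemma~\ref{Lem_hk prod}.

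Finally, after absorbing the cocycle $g_N(z)=e^{-i\cdot 2N\rho^2\arg(\sqrt{2}\rho+\gamma_N z)}$, one assembles
\begin{equation}
c_N(z,w)G_N(z,w)\sim \frac{\gamma_N^3}{2\sqrt{2\rho^2}}\cdot\frac{N(1-\rho^2)}{\rho^2}\sum_k\sum_l \sqrt{s_k t_l}\,e^{2s_k z+2t_l w},
\end{equation}
where $s_k=(\tau_{2k+1}-2\rho^2)/(2(1-\rho^2))$, $t_l=(\tau_{2l}-2\rho^2)/(2(1-\rho^2))$, and the sum runs over the dominant range $2N(\rho^2+\delta_N)\le 2l\le 2k+1\le 2N-1$. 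Substituting $\gamma_N^3=2\sqrt{2}\rho^3/(N^3(1-\rho^2)^3)$ the overall prefactor reduces to $1/(N(1-\rho^2))^2$, which is exactly the Jacobian making $\sum_k\sum_l$ into the Riemann sum for $\int_0^1\int_0^s\sqrt{st}\,e^{2sz+2tw}\,dt\,ds$. The negligibility established in the first step ensures the truncation does not change the limit. This yields the announced formula. The main technical obstacle, which distinguishes this case from Subsection~\ref{Subsec_SH edge}, is the uniform control of the ratio $h_{m+1}/h_m$ across the entire dominant range $s\in[\delta_N/(1-\rho^2),1]$: near $s\to 0$ the factor $(\tau_m-2\rho^2)$ is only of order $\delta_N$, so the relative error in the ratio is $O(1/(N\delta_N))=O(N^{-1/2}/\log N)$, which is still $o(1)$ but leaves no margin; hence the choice $\delta_N=N^{-1/2}\log N$ is essential.
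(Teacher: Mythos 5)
Your overall line of argument follows the paper's own proof quite closely: identify the dominant range of degrees, compute the point-wise asymptotics of the weighted monomials there (your first-order Taylor expansion of $V_{\tau_m}$ at $\sqrt{2}\rho$ and the resulting $e^{2sz}$ with $s=\tfrac{\tau_m-2\rho^2}{2(1-\rho^2)}$ matches the paper's Lemma~\ref{lem:wmhard}), telescope the norm ratios (matching Lemma~\ref{lem:hwall}), and close with a Riemann sum. Steps 2--4 are essentially sound.

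The genuine gap is in your first step. You claim that for \emph{all} $m\le 2N(\rho^2+\delta_N)$ and $z$ in a compact $K\subset\H_-$, one has $|w_m(\sqrt{2}\rho+\gamma_N z)|^2\le Ce^{-c(\log N)^2}$, arguing that ``$\sqrt{2}\rho-r_{\tau_m}$ is bounded below by $c\delta_N$ in this regime.'' That lower bound fails: for $\tau_m$ in the intermediate band $|\tau_m-2\rho^2|<2\delta_N$ the critical point $r_{\tau_m}=\sqrt{\tau_m}$ lies within $O(\delta_N)$ of $\sqrt{2}\rho$, possibly on either side, and when $\tau_m$ is near $2\rho^2$ the quantity $\sqrt{2}\rho-r_{\tau_m}$ is near zero (even negative once $\tau_m>2\rho^2$). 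In that band $h_m$ is only of order $N^{-1/2}$ (Lemma~\ref{lem:henorm}(i), not (ii)), and $|w_m(p+\gamma_N z)|^2$ is in fact as large as $O(\sqrt{N}\log N)$; this is exactly the content of the paper's Lemma~\ref{lem:lowerwmhe}. So the intermediate band is \emph{not} individually negligible, and your truncation estimate breaks down there. The paper dispatches this band by a counting argument in the proof of Lemma~\ref{Lem_G:asym 2}: there are $O(N^2\delta_N)$ such terms in the double sum, each bounded as above, and after multiplying by $\gamma_N^3=O(N^{-3})$ the total contribution is $O(N^{-1}\log^2 N)$. You need a step like this; without it the proof is incomplete. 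Exponential decay does hold, but only for the truly low degrees $m<2N(\rho^2-\delta_N)$ (the paper's Lemma~\ref{lem:lowerlower}), so you must split the discarded sum into those two ranges and treat them with different estimates.
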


We first show the following lemma.

\begin{lem}\label{lem:wmhard}
For each $m$ with $(2\rho^2 + 2\delta_N)N < m <2N$ and for all $z$ in a compact set, we have
\begin{equation*}
    w_m(\zeta) =
    \frac{g_N(z)}{\rho}\sqrt{N\Big(\frac{m}{2N}-\rho^2\Big)}
    e^{\frac{2}{1-\rho^2}(\frac{m}{2N}-\rho^2)z}
    (1+o(1)),
\end{equation*}
where $g_N(z)$ is a unimodular function.
\end{lem}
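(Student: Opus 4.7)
My plan is to prove Lemma~\ref{lem:wmhard} by a direct asymptotic computation, combining the definition \eqref{omega m} of $w_m$ with the hard-edge norm asymptotics from Lemma~\ref{lem:hardwallnorm}(ii), and then carefully expanding the remaining $\zeta$-dependent factors in the microscopic variable $z$ using the scale $\gamma_N=\sqrt{2}\rho/(N(1-\rho^2))$.

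First, from Lemma~\ref{lem:hardwallnorm}(ii) applied with $\tau_m=m/N \ge 2\rho^2+2\delta_N$, I would write
\begin{equation}
\frac{1}{\sqrt{h_m}} = \frac{\sqrt{N(\tau_m-2\rho^2)}}{(\sqrt{2}\rho)^{m+1}}\, e^{\rho^2 N}\,(1+o(1)),
\end{equation}
with the $o(1)$ uniform in the allowed range of $m$. Second, I would expand $\zeta^m=(\sqrt{2}\rho)^m(1+z/(N(1-\rho^2)))^m$ via $m\log(1+u)=mu-mu^2/2+O(mu^3)$ with $u=z/(N(1-\rho^2))=O(1/N)$; since $m\le 2N$, the leading term $mu=\tau_m z/(1-\rho^2)$ is $O(1)$ and all higher-order corrections are $O(1/N)$, uniformly for $z$ in any compact set and uniformly in $m$. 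Third, I would expand
\begin{equation}
e^{-N|\zeta|^2/2}=e^{-\rho^2 N}\exp\!\Big(-\tfrac{2\rho^2\re z}{1-\rho^2}\Big)\,(1+o(1)),
\end{equation}
using $|\zeta|^2=2\rho^2+2\sqrt{2}\rho\gamma_N\re z+\gamma_N^2|z|^2$ and noting that $N\gamma_N^2=O(1/N)$. Multiplying the three pieces, the factors $e^{\pm\rho^2 N}$ cancel, as do the $(\sqrt{2}\rho)^m$ factors, and I obtain
\begin{equation}
w_m(\zeta)=\frac{\sqrt{N(\tau_m-2\rho^2)}}{\sqrt{2}\rho}\exp\!\Big(\tfrac{\tau_m z-2\rho^2\re z}{1-\rho^2}\Big)(1+o(1)).
\end{equation}
Finally, I would rewrite $\tau_m z-2\rho^2\re z=(\tau_m-2\rho^2)z+2i\rho^2\im z$, so that the phase $g_N(z):=\exp(2i\rho^2\im z/(1-\rho^2))$ is unimodular (and satisfies $g_N(\bar z)=1/g_N(z)$), and simplify $\sqrt{N(\tau_m-2\rho^2)}/(\sqrt{2}\rho)=\sqrt{N(\tfrac{m}{2N}-\rho^2)}/\rho$, which yields precisely the stated formula.

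The steps themselves are routine, and I expect no serious obstacle beyond bookkeeping. The one point that deserves care is ensuring that all $o(1)$-terms are uniform in $m$ over the range $(2\rho^2+2\delta_N)N<m<2N$ as well as in $z$ over a compact set: the Taylor remainders in $m\log(1+z/(N(1-\rho^2)))$ are bounded by $Cm/N^2\le C'/N$ independently of $m$, and the error in Lemma~\ref{lem:hardwallnorm}(ii) is likewise uniform by its statement. Once uniformity is verified, splitting off the unimodular phase is immediate and the lemma follows.
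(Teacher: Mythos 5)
Your proof is correct and follows essentially the same route as the paper: the paper organises the computation by writing $w_m(\zeta)=h_m^{-1/2}e^{NV_{\tau_m}(\sqrt{2}\rho)/2}e^{-N(|\zeta|^2-2\tau_m\log\zeta-V_{\tau_m}(\sqrt{2}\rho))/2}$ and Taylor-expanding the single exponent around $\sqrt{2}\rho$, while you factor the three pieces $h_m^{-1/2}$, $\zeta^m$, and $e^{-N|\zeta|^2/2}$ and expand each separately — but the underlying expansion (of $m\log(1+\gamma_N z/(\sqrt{2}\rho))$ and of $|\zeta|^2$) is identical, and your error bookkeeping and uniformity discussion match the paper's. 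The final cocycle $g_N(z)=\exp\bigl(2i\rho^2\im z/(1-\rho^2)\bigr)$ you extract agrees with the paper's phase factor.
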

\begin{proof}
By Lemma \ref{lem:hardwallnorm} (ii), we obtain
\begin{equation*}
    w_m(\zeta) = \frac{1}{\rho}\sqrt{N\Big(\frac{m}{2N}-\rho^2\Big)}
    e^{-N(|\zeta|^2-2\tau_m{N}\log \zeta -V_{\tau_m}(\sqrt{2}\rho))/2}(1+o(1)),
\end{equation*}
where $\tau_m = m/N$ and $V_{\tau_m}(r) = r^2 -2\tau_{m}\log r$ defined in \eqref{Vtau}.
Using the expansion
\begin{align*}
    &\quad |\sqrt{2}\rho + \gamma_N z|^2-2\tau_m\log (\sqrt{2}\rho + \gamma_N z) - V_{\tau_m}(\sqrt{2}\rho) \\
    &= 2\sqrt{2}\rho \gamma_N \re z -\frac{2m}{\sqrt{2}\rho N}\gamma_N z + O(N^{-2})
    = \frac{4}{N(1-\rho^2)}\Big(\rho^2 - \frac{m}{2N}\Big)z-i\frac{4\rho^2}{N(1-\rho^2)}\im z + O(N^{-2}),
\end{align*}
we have the following asymptotic expansion:
\begin{equation*}
    w_m(\sqrt{2}\rho + \gamma_N z) =
    \frac{1}{\rho}\sqrt{N\Big(\frac{m}{2N}-\rho^2\Big)}\,
     e^{i\frac{2\rho^2}{1-\rho^2}\im z} e^{\frac{2}{1-\rho^2}(\frac{m}{2N}-\rho^2)z}
    (1+o(1)).
\end{equation*}
\end{proof}

\begin{lem}\label{lem:lowerwmhe}
Let $K$ be a compact subset of $\H_{-}$.
Then there exists a positive constant $C$ such that for all $m$ with $|\tau_m - 2\rho^2|<2\delta_N$ and for all $z\in K$,
\begin{equation*}
    |w_m(p+\gamma_N z)|^2 \leq C\sqrt{N}\log N.
\end{equation*}
\end{lem}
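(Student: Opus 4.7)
The plan is to derive an explicit asymptotic form for $|w_m(\sqrt{2}\rho + \gamma_N z)|^2$ in the critical band $|\tau_m - 2\rho^2| < 2\delta_N$, and then reduce the stated uniform bound to a Mills-ratio estimate for $\Phi$. The starting point is the identity $|w_m(\zeta)|^2 = h_m^{-1} r^{2m} e^{-Nr^2}$ with $r = |\zeta|$, together with the algebraic rewrite
\begin{equation*}
r^{2m} e^{-N r^2} = (m/N)^m e^{-m} \, e^{-N(V_{\tau_m}(r) - V_{\tau_m}(r_{\tau_m}))},
\end{equation*}
where $V_{\tau_m}$ is defined in \eqref{Vtau} and $r_{\tau_m} = \sqrt{\tau_m}$. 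Inserting the asymptotics of $h_m$ from Lemma~\ref{lem:hardwallnorm}(i) cancels the factors $(m/N)^m e^{-m}$, so that
\begin{equation*}
|w_m(\sqrt{2}\rho + \gamma_N z)|^2 = \frac{\sqrt{N}}{2\rho\, \Phi(\xi_m)} \, e^{-N(V_{\tau_m}(r) - V_{\tau_m}(r_{\tau_m}))} (1+o(1)),
\end{equation*}
uniformly in $m$, with $r = |\sqrt{2}\rho + \gamma_N z|$.

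Next I would carry out the Taylor expansion of $V_{\tau_m}$ at the critical point. Since $V''_{\tau_m}(r_{\tau_m}) = 4$, one has $V_{\tau_m}(r) - V_{\tau_m}(r_{\tau_m}) = 2(r - r_{\tau_m})^2 + O(|r - r_{\tau_m}|^3)$. Writing $s = \tau_m - 2\rho^2$ (so $|s| \leq 2\delta_N$) and using $r^2 - \tau_m = 2\sqrt{2}\rho\, \gamma_N \re z - s + O(\gamma_N^2)$, division by $r + r_{\tau_m} \sim 2\sqrt{2}\rho$ yields
\begin{equation*}
r - r_{\tau_m} = \gamma_N \re z - \frac{s}{2\sqrt{2}\rho} + O(\gamma_N^2 + \gamma_N \delta_N).
\end{equation*}
Squaring and multiplying by $2N$, the cross term is $O(N\gamma_N \delta_N) = O(\delta_N)$, the $\gamma_N^2$ term is $O(N^{-1})$, and the square $2N \cdot s^2/(8\rho^2)$ equals exactly $\xi_m^2$ (with $\xi_m = \sqrt{N}s/(2\rho)$). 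The cubic remainder contributes $O(N\delta_N^3) = o(1)$. Hence, uniformly for $z \in K$,
\begin{equation*}
N(V_{\tau_m}(r) - V_{\tau_m}(r_{\tau_m})) = \xi_m^2 + o(1), \qquad\text{so}\qquad |w_m(\sqrt{2}\rho + \gamma_N z)|^2 \le \frac{C_1 \sqrt{N}}{\Phi(\xi_m)} \, e^{-\xi_m^2}.
\end{equation*}

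The remaining step is a uniform estimate for $F(\xi) := e^{-\xi^2}/\Phi(\xi)$ on $|\xi| \le \log N /\rho$. For $\xi \le 1$, $\Phi(\xi) \ge \Phi(1) > 0$ gives $F(\xi) \le C$. For $\xi \to +\infty$ the standard Mills-ratio asymptotic $\Phi(\xi) = \frac{e^{-\xi^2}}{2\xi}(1 + O(\xi^{-2}))$ yields $F(\xi) \sim 2\xi$. Consequently $F(\xi_m) \le C_2 \max(1, \xi_m) \le C_3 \log N$ throughout the allowed range, and the bound $|w_m|^2 \le C \sqrt{N}\log N$ follows.

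I expect the main technical obstacle to be the book-keeping in step two: the shift $-s/(2\sqrt{2}\rho)$ coming from $s = \tau_m - 2\rho^2$ has scale $\delta_N$, which is larger than the geometric shift $\gamma_N \re z$ of scale $N^{-1}$, so the leading quadratic term $2N(r-r_{\tau_m})^2$ is entirely driven by $s$ and must reproduce precisely $\xi_m^2$ to cancel the exponential decay of $\Phi(\xi_m)$ when $\xi_m$ is of order $\log N$. Once that cancellation is verified uniformly, the Mills-ratio bound delivers the claimed $\sqrt{N}\log N$ estimate.
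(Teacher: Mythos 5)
Your proof is correct and follows essentially the same route as the paper: rewrite $|w_m|^2$ via Lemma~\ref{lem:hardwallnorm}(i) as $\frac{\sqrt{N}}{2\rho\,\Phi(\xi_m)}\,e^{-N(V_{\tau_m}(r)-V_{\tau_m}(r_{\tau_m}))}(1+o(1))$, Taylor-expand $V_{\tau_m}$ at $r_{\tau_m}=\sqrt{\tau_m}$ to identify $N(V_{\tau_m}(r)-V_{\tau_m}(r_{\tau_m}))=\xi_m^2+o(1)$ on $|\tau_m-2\rho^2|<2\delta_N$, and then bound $e^{-\xi_m^2}/\Phi(\xi_m)$ by $O(\log N)$ via the Mills-ratio asymptotics of $\erfc$. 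You spell out the intermediate error bookkeeping (the $O(N\gamma_N\delta_N)$ cross term, the $O(N\gamma_N^2)$ geometric term, the $O(N\delta_N^3)$ cubic remainder) more explicitly than the paper, and you make the final step fully quantitative rather than just quoting the error-function asymptotics, but these are presentational differences, not a different argument.
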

\begin{proof}
By Lemma~\ref{lem:henorm} (i), we have
\begin{equation}
    |w_m(p+\gamma_N z)|^2 = (h_m)^{-1} e^{-NV_{\tau_m}(|p+\gamma_N z|)} = \frac{\sqrt{N}}{2\rho\, \Phi(\xi_m)} e^{-N(V_{\tau_m}(|p+\gamma_N z|)-V_{\tau_m}(r_{\tau_m}))}(1+o(1)),
\end{equation}
where $\xi_m = \frac{\sqrt{N}}{2\rho}(\tau_m - 2\rho^2)$ and $r_{\tau_m}$ is the critical point of $V_{\tau_m}$. The Taylor expansion of $V_{\tau_m}$ at $r_{\tau_m}$ gives
\begin{equation}
     |w_m(p+\gamma_N z)|^2 = \frac{\sqrt{N}}{2\rho\,\Phi(\xi_m)}e^{-2N(p+\gamma_N\re z-r_{\tau_m})^2}(1+o(1)) = \frac{\sqrt{N}}{2\rho\,\Phi(\xi_m)}e^{-\xi_m^2}(1+o(1))
\end{equation}
since $p - r_{\tau_m} = \sqrt{2}\rho - \sqrt{\frac{m}{N}}=\frac{\xi_m}{\sqrt{2N}}+O(N^{-1}|\xi_m|^2)$ as $N\to \infty$. Here $o(1)
\to 0$ uniformly for $z\in K$. Note that $|\xi_m| = O(\log N)$.
By \eqref{Phi erfc}, it follows from the well-known asymptotics of the complementary error function (see e.g.~\cite[Section 7.12]{olver2010nist}) that
\begin{equation*}
e^{\xi_m^2}\,\Phi(\xi_m) \geq C'(\log N)^{-1}.
\end{equation*}
This completes the proof.
\end{proof}

As in Section \ref{Subsec_SH edge}, the following lemma asserts that the weighted polynomials $w_m$ of lower degree $m<2N(\rho^2 - \delta_N)$ tend to exponentially decay in a neighbourhood of the hard edge. For the proof of the lemma, we refer to Lemma 3.5 in \cite{seo2020edge}.
\begin{lem}\label{lem:lowerlower}
Let $K$ be a compact subset of $\H_{-}$. Then for $m$ with $0\leq m<N(2\rho^2 - 2\delta_N)$ and for $z\in K$, we have
\begin{equation}
    |w_m(p+\gamma_N z)|^2\leq C e^{-c(\log N)^2}),
\end{equation}
where $c$ and $C$ are positive constants.
\end{lem}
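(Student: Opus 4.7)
The plan is to estimate directly $|w_m(\zeta)|^2 = h_m^{-1}e^{-NV_{\tau_m}(|\zeta|)}$ by combining a Laplace lower bound for $h_m$ with a uniform lower bound for $V_{\tau_m}(|\zeta|)-V_{\tau_m}(r_{\tau_m})$, where $r_{\tau_m}=\sqrt{\tau_m}$ is the critical point of $V_{\tau_m}$ from \eqref{Vtau}. The key observation is that the hypothesis $\tau_m < 2\rho^2 - 2\delta_N$ forces $r_{\tau_m}$ to remain separated from the hard wall $p=\sqrt{2}\rho$ by at least a gap of order $\delta_N$, which when squared produces the desired factor $e^{-cN\delta_N^2}=e^{-c(\log N)^2}$.

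The Laplace step follows the same scheme as in Lemma~\ref{lem:henorm}(i). Restricting
\[
h_m = 2\int_0^{\sqrt{2}\rho} e^{-NV_{\tau_m}(r)}\,r\,dr
\]
to a $\delta_N'$-neighbourhood of $r_{\tau_m}$ inside $(0,\sqrt{2}\rho)$ and using $V_{\tau_m}''(r_{\tau_m})=4$ yields a uniform lower bound $h_m \geq c_1 N^{-1/2} e^{-NV_{\tau_m}(r_{\tau_m})}$ whenever $r_{\tau_m}$ is bounded away from both $0$ and $\sqrt{2}\rho$; the case of $r_{\tau_m}$ near $0$ is handled directly from $h_m = N^{-(m+1)}\gamma(m+1,2\rho^2 N)\geq \tfrac{1}{2}\,m!\,N^{-(m+1)}$, since for $m$ in the allowed range the argument $2\rho^2 N$ of the lower incomplete gamma function is much larger than $m+1$. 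Substituting gives
\[
|w_m(\zeta)|^2 \leq C\sqrt{N}\,e^{-N[V_{\tau_m}(|\zeta|)-V_{\tau_m}(r_{\tau_m})]}.
\]

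For the quadratic lower bound, note that for $z\in K\subset\mathbb{H}_-$ one has $|\zeta|=\sqrt{2}\rho+O(N^{-1})$, while $r_{\tau_m}\leq \sqrt{2\rho^2-2\delta_N}=\sqrt{2}\rho-\delta_N/(\sqrt{2}\rho)+O(\delta_N^2)$, so $|\zeta|-r_{\tau_m}\geq c_2\delta_N$ uniformly in $m$ and $z\in K$. Since $V_{\tau_m}'(r_{\tau_m})=0$ and $V_{\tau_m}''(r)=2+2\tau_m/r^2\geq 2$ everywhere, Taylor's theorem with integral remainder yields
\[
V_{\tau_m}(|\zeta|)-V_{\tau_m}(r_{\tau_m})\geq (|\zeta|-r_{\tau_m})^2 \geq c_2^2\delta_N^2,
\]
where in the small-$r_{\tau_m}$ regime $|\zeta|-r_{\tau_m}$ is of order one and the bound is even stronger. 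Combining with the Laplace estimate gives $|w_m(\zeta)|^2 \leq C\sqrt{N}\,e^{-c_2^2(\log N)^2}$, and absorbing the polynomial factor $\sqrt{N}$ into the exponent (at the cost of any slightly smaller constant) yields the claim.

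The main obstacle lies in ensuring uniformity in $m$ across the wide range $0 \leq m < N(2\rho^2-2\delta_N)$. Both the Laplace constant and the Taylor remainder must not degrade as $r_{\tau_m}$ varies, and the transition between the ``bulk'' regime (where Laplace applies cleanly near an interior critical point) and the ``small-$m$'' regime (where $r_{\tau_m}$ is close to the left endpoint $0$ of the integration) requires matching the two lower bounds on $h_m$. The explicit $2\delta_N$ margin in the hypothesis is precisely what supplies the required separation from the wall and keeps all estimates uniform in $m$.
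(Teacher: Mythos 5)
Your proof is correct and follows a natural Laplace-plus-convexity strategy: lower-bound $h_m$ by the Laplace method at the critical point $r_{\tau_m}=\sqrt{\tau_m}$ of $V_{\tau_m}$, then use $V_{\tau_m}'(r_{\tau_m})=0$ and $V_{\tau_m}''\ge 2$ to show that $N\bigl(V_{\tau_m}(|\zeta|)-V_{\tau_m}(r_{\tau_m})\bigr)\gtrsim N\delta_N^2=(\log N)^2$, with the $\sqrt{N}$ (or $N/\sqrt{m}$) prefactor absorbed. The paper itself only cites Lemma~3.5 of \cite{seo2020edge} for this statement, so a literal comparison is not possible; what is worth noting is that the paper's \emph{own} proof of the analogous soft/hard-edge statement (Lemma~\ref{lem:lowerwm}) is organized slightly differently. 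There, rather than peaking the Laplace estimate at the genuine critical point $r_{\tau_m}$ (which forces your separate treatment of the small-$m$ regime where $r_{\tau_m}$ drifts toward the origin), the paper lower-bounds $h_m$ by integrating over a fixed $\delta_N$-window $[\sqrt{2}\rho-2c_1\delta_N,\sqrt{2}\rho-c_1\delta_N]$ anchored near the wall, chosen so that $r_{\tau_m}$ lies to its left for \emph{all} admissible $m$; monotonicity of $V_{\tau_m}$ to the right of its critical point then gives $h_m\ge C_1\delta_N\, e^{-NV_{\tau_m}(\sqrt{2}\rho-c_1\delta_N)}$ uniformly, and the same convexity argument finishes. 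This buys a cruder prefactor $\delta_N^{-1}$ but removes your case split. A small remark on your write-up: the incomplete-gamma lower bound $h_m\ge\tfrac12\,m!\,N^{-(m+1)}$ (via the Poisson tail being $\ge\tfrac12$ when $m$ is more than a constant multiple of $\log N$ standard deviations below the mean $2\rho^2 N$) in fact holds uniformly for \emph{all} $m<N(2\rho^2-2\delta_N)$, not just small $m$; combined with Stirling it reproduces the Laplace bound with prefactor $N/\sqrt{m}$, which is at most $O(N)$ and thus always absorbed by $e^{-c(\log N)^2}$. So your proof could be streamlined to a single estimate with no case distinction at all. The phrase ``bounded away from $\sqrt{2}\rho$'' should be sharpened to ``at distance $\gtrsim\delta_N\gg N^{-1/2}$'', which is the uniform margin guaranteed by the hypothesis and is exactly what makes the Laplace window fit inside the domain; with that wording all constants are visibly uniform in $m$.
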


As a counterpart of Lemma~\ref{Lem_G:asym}, we obtain the following lemma which allows us to discard the lower degree polynomials in the sum \eqref{GNsumsh}.

\begin{lem} \label{Lem_G:asym 2}
Let $K$ be a compact subset of $\mathbb{H}_-$. Then for $z,w\in K$, we have
\begin{equation}\label{eq:GNhard}
    G_N(z,w) \sim \frac{\gamma_N^3}{2}\!\! \sum_{k = (\rho^2+\delta_N)N}^{N-1}\! w_{2k+1}(\zeta) \Big( \Big(\frac{h_{2k}}{h_{2k+1}}\Big)^{\frac{1}{2}}w_{2k}(\eta) + \!\!\sum_{ l=(\rho^2+\delta_N)N}^{k-1}\!
    \Big( \frac{h_{2l}}{h_{2k+1}} \Big)^{\frac{1}{2}} \prod_{j=0}^{k-l-1} \frac{h_{2l+2j+2}}{h_{2l+2j+1}}w_{2l}(\eta)\Big).
\end{equation}
\end{lem}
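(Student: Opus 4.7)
The plan is to follow the strategy of Lemma \ref{Lem_G:asym} for the soft/hard edge case, adapting the asymptotic input to the hard-edge situation. The microscale $\gamma_N = \sqrt{2}\rho/(N(1-\rho^2))$ yields $\gamma_N^3 = O(N^{-3})$, so the task is to show that every term of the double sum \eqref{GNsumsh} with $k$ or $l$ below $N(\rho^2+\delta_N)$ contributes only $o(1)$ to $G_N(z,w)$, uniformly for $z, w$ in any compact set $K \subset \H_-$. I split each summation index into three regimes: the \emph{bulk} $[0, N(\rho^2-\delta_N))$, the \emph{transition} $[N(\rho^2-\delta_N), N(\rho^2+\delta_N))$, and the \emph{edge} $[N(\rho^2+\delta_N), N)$. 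The weight estimates used in each regime are Lemma \ref{lem:lowerlower} in the bulk (exponentially small $w_m$), Lemma \ref{lem:lowerwmhe} in the transition regime (the polynomial bound $|w_m|^2 \leq C\sqrt{N}\log N$), and Lemma \ref{lem:wmhard} in the edge regime.

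For the bulk contributions, the exponential smallness $|w_m(\zeta)| \leq C\exp(-c(\log N)^2/2)$ immediately absorbs the $O(N^2)$ enumeration and any polynomial norm-ratio growth. For the transition contributions, I use the pointwise estimate $|w_m(\zeta)|^2 = O(\sqrt{N}\log N)$; since the transition regime has cardinality $O(N\delta_N) = O(\sqrt{N}\log N)$, the transition-transition contribution is bounded by
\[
\gamma_N^3 \cdot O(N\log^2 N) \cdot O(\sqrt{N}\log N) = O(N^{-3/2}\log^3 N),
\]
and the mixed transition-edge contribution is analogously $O(N^{-3/4}\log^{3/2} N)$, both of which vanish. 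The remaining norm-ratio factors $\sqrt{h_{2l}/h_{2k+1}}\prod_{j=0}^{k-l-1} h_{2l+2j+2}/h_{2l+2j+1}$ entering these bounds are controlled polynomially in $N$ by Lemma \ref{lem:hardwallnorm}(ii), which in the edge regime gives
\[
\frac{h_{m+1}}{h_m} = 2\rho^2\,\frac{\tau_m-2\rho^2}{\tau_{m+1}-2\rho^2}(1+o(1)),
\]
and by matching with part (i) across the crossover.

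The main obstacle is controlling these norm-ratio products when the summation indices straddle the bulk/transition or transition/edge interface, where the asymptotic of Lemma \ref{lem:hardwallnorm}(ii) degenerates as $\tau_m \to 2\rho^2$ and neighbouring ratios $h_{m+1}/h_m$ transition between the two forms. Fortunately, a crude uniform polynomial-in-$N$ bound on the telescoping product suffices in the non-edge regimes, since the weight factors $|w_m|$ are either exponentially small (bulk) or the cardinality of the straddled regime is sub-polynomial (transition); the sharp asymptotic form of the norm ratios is needed only for the edge-edge contribution, which is what survives in \eqref{eq:GNhard}.
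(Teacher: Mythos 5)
Your proposal is correct and takes essentially the same route as the paper: both split the summation indices into bulk/transition/edge regimes, discard the bulk contributions via the exponential decay of Lemma~\ref{lem:lowerlower}, bound the transition contributions via the $O(\sqrt{N}\log N)$ estimate of Lemma~\ref{lem:lowerwmhe} together with the $O(N\delta_N)$ cardinality, and control the norm-ratio products by the asymptotics of $h_{m+1}/h_m$ in each regime (and a crude polynomial bound where precision is not needed). Your bookkeeping of the order of the discarded terms is slightly sharper than the paper's (because you split transition indices from bulk indices of $l$ before estimating), but both give $o(1)$, and the key inputs from Lemma~\ref{lem:hardwallnorm} and the weight estimates are identical.
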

\begin{proof}
Similarly as in the proof of Lemma~\ref{Lem_G:asym}, we obtain for all $m$ with $m<2N(\rho^2 - \delta_N)$
\begin{equation}
    \frac{h_{m}}{h_{m+1}} = \frac{N\,\gamma(m+1,2\rho^2N)}{\gamma(m+2,2\rho^2N)} = \frac{N}{m+1}(1+O(e^{-c(\log N)^2})), \quad   \frac{h_{m}}{h_{m+1}}\frac{h_{m+2}}{h_{m+1}} = \frac{m+2}{m+1}(1+O(e^{-c(\log N)^2}))
\end{equation}
for some constant $c$.
Together with Lemma~\ref{lem:lowerlower}, this implies for all $z,w\in K$
\begin{equation}
    \sum_{k=0}^{N(\rho^2-\delta_N)} w_{2k+1}(\zeta)\Big[ \Big(\frac{h_{2k}}{h_{2k+1}}\Big)^{\frac{1}{2}}w_{2k}(\eta) + \sum_{l=0}^{k-1}\Big(\frac{h_{2l}}{h_{2k+1}}\Big)^{\frac{1}{2}}\prod_{j=0}^{k-l-1}\frac{h_{2l+2j+2}}{h_{2l+2j+1}}w_{2l}(\eta) \Big] = o(1)
\end{equation}
since all the weighted polynomials $w_{2k+1}$ and $w_{2l}$ in the sum are exponentially small as $N\to \infty$.

Now for $m$ with $|\tau_m - 2\rho^2 |<2\delta_N$, it follows from Lemma~\ref{lem:henorm} (i) (see also Lemma~\ref{Lem_hk prod}) that
\begin{equation}
    \frac{h_{m}}{h_{m+1}} = \frac{1}{2\rho^2}(1+O(N^{-\frac{1}{2}})),\qquad \frac{h_{m}}{h_{m+1}}\frac{h_{m+2}}{h_{m+1}} = 1+O(N^{-1}).
\end{equation}
This gives the estimate for $k,l$ with $|k-\rho^2N|<N\delta_N$ and $|l-\rho^2N|<N\delta_N$
\begin{equation*}
    \sqrt{\frac{h_{2l}}{{h_{2k+1}}}}\prod_{j=0}^{k-l-1}\frac{h_{2l+2j+2}}{h_{2l+2j+1}} = \sqrt{\frac{h_{2k}}{h_{2k+1}}}\prod_{j=0}^{k-l-1}\sqrt{\frac{h_{2l+2j+2}}{h_{2l+2j+1}}}\sqrt{\frac{h_{2l+2j}}{h_{2l+2j+1}}} = \frac{1}{\sqrt{2\rho^2}} (1+o(1)).
\end{equation*}
Note that Lemma~\ref{lem:lowerwmhe} gives a bound $|w_m(
\eta)|^2 < C\sqrt{N}\log N$ for all $m$ with $|\tau_m - 2\rho^2|<2\delta_N$ and all $z\in K$.
Since the sum
\begin{equation}\label{sum:interm}
    \gamma_N^3 \sum_{k=N(\rho^2-\delta_N)}^{N(\rho^2+\delta_N)} w_{2k+1}(\eta)\Big[ \Big(\frac{h_{2k}}{h_{2k+1}}\Big)^{\frac{1}{2}}w_{2k}(\eta) + \sum_{l=0}^{k-1}\Big(\frac{h_{2l}}{h_{2k+1}}\Big)^{\frac{1}{2}}\prod_{j=0}^{k-l-1}\frac{h_{2l+2j+2}}{h_{2l+2j+1}}w_{2l}(\eta) \Big]
\end{equation}
contains $O(N^2\delta_N)$ terms and $\gamma_N=O(N^{-1})$,
the sum \eqref{sum:interm} has a bound of $O(N^{-1}\log^2 N)$.

Finally, we obtain that
\begin{equation}
    \gamma_N^3 \sum_{k=N(\rho^2+\delta_N)}^{N-1} w_{2k+1}(\eta) \sum_{l=0}^{N(\rho^2+\delta_N)}\Big(\frac{h_{2l}}{h_{2k+1}}\Big)^{\frac{1}{2}}\prod_{j=0}^{k-l-1}\frac{h_{2l+2j+2}}{h_{2l+2j+1}}w_{2l}(\eta) =
    O(N^{-\frac{3}{4}}\log^2 N)
\end{equation}
since $|w_{2k+1}|^2$ is at most $O(N)$ for $k \geq  N(\rho^2 + \delta_N)$, $|w_{2l}|^2 = O(\sqrt{N}\log N)$ for $|l - \rho^2N|\leq N\delta_N$, and $w_{2l}$ with $l \leq N(\rho^2 - \delta_N)$ is exponentially small.
Hence, the proof is complete.
\end{proof}

\begin{lem}\label{lem:hwall}
For $k$, $l$ with $l\leq k-1$ and
$(\rho^2 + \delta_N)N \leq k,l < N$, we have
\begin{equation}\label{hratio:wall}
    \sqrt{\frac{h_{2k+1}}{h_{2k}}} = \sqrt{2}\rho\,(1+o(1)),  \qquad
    \sqrt{\frac{h_{2l}}{h_{2k+1}}}\prod_{j=0}^{k-l-1}\frac{h_{2l+2j+2}}{h_{2l+2j+1}} = \frac{1}{\sqrt{2}\rho}\, (1+o(1)),
\end{equation}
where $o(1)\to 0$ as $N\to \infty$.
\end{lem}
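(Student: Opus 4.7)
The plan is to adapt the strategy of the proof of Lemma~\ref{Lem_hk prod} to the hard-edge setting, relying on the boundary Laplace asymptotics of Lemma~\ref{lem:hardwallnorm}(ii). I shall write
\[
h_m = \frac{(2\rho^2)^{m+1}}{N c_m}\,e^{-2\rho^2 N}(1+\epsilon_m), \qquad c_m := \tfrac{m}{N}-2\rho^2,
\]
with $\epsilon_m=o(1)$ uniformly in $m$ on the range $2N(\rho^2+\delta_N)\le m\le 2N$. Note that $c_{m+1}-c_m = 1/N$ and $c_m\ge 2\delta_N=2N^{-1/2}\log N$ throughout this range.

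For the first identity I would substitute directly:
\[
\frac{h_{m+1}}{h_m} = 2\rho^2\,\frac{c_m}{c_{m+1}}\,\frac{1+\epsilon_{m+1}}{1+\epsilon_m} = 2\rho^2\bigl(1+O(1/(Nc_m))+o(1)\bigr) = 2\rho^2(1+o(1)),
\]
since $c_m/c_{m+1}=1+O(1/(Nc_m))=1+O(1/(\sqrt{N}\log N))$. Taking the square root at $m=2k$ gives the first identity.

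For the second identity I would mimic the regrouping used in the proof of Lemma~\ref{Lem_hk prod} and write
\[
\sqrt{\frac{h_{2l}}{h_{2k+1}}}\prod_{j=0}^{k-l-1}\frac{h_{2l+2j+2}}{h_{2l+2j+1}} = \sqrt{\frac{h_{2k}}{h_{2k+1}}}\prod_{j=0}^{k-l-1}\sqrt{\frac{h_{2l+2j+2}\,h_{2l+2j}}{h_{2l+2j+1}^2}}.
\]
The prefactor equals $1/\sqrt{2\rho^2}\,(1+o(1))$ by the first identity, so it remains to handle the remaining product. Inserting Lemma~\ref{lem:hardwallnorm}(ii) gives
\[
\frac{h_{m+2}h_m}{h_{m+1}^2} = \frac{c_{m+1}^2}{c_m c_{m+2}}\cdot\frac{(1+\epsilon_m)(1+\epsilon_{m+2})}{(1+\epsilon_{m+1})^2},
\]
and the elementary identity $c_{m+1}^2-c_m c_{m+2}=1/N^2$ yields $c_{m+1}^2/(c_m c_{m+2}) = 1+1/(N^2 c_m c_{m+2})$, whose logarithm, summed over $m \in\{2l,2l+2,\ldots,2k-2\}$, is bounded by $\sum_{i\ge l}O(1/(i-\rho^2N)^2)=O(1/(\sqrt{N}\log N))=o(1)$ since $l-\rho^2N\ge \sqrt{N}\log N/2$.

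The hard part will be bounding the accumulated error factor $\prod(1+\epsilon_m)(1+\epsilon_{m+2})/(1+\epsilon_{m+1})^2$ over the range, since the naive bound $|\epsilon_m|=O(1/(Nc_m^2))=O(1/\log^2 N)$ near the boundary will not telescope to $o(1)$ directly across $O(N)$ terms. The resolution I have in mind is to refine the boundary Laplace analysis in the proof of Lemma~\ref{lem:hardwallnorm}(ii) to produce a smooth expansion $\epsilon_m = -2\rho^2/(Nc_m^2) + O(1/(Nc_m^2)^2)$; the displayed factor then exhibits the discrete second-difference structure
\[
\frac{(1+\epsilon_m)(1+\epsilon_{m+2})}{(1+\epsilon_{m+1})^2} = 1 + (\epsilon_m+\epsilon_{m+2}-2\epsilon_{m+1}) + O\bigl(|\epsilon_m|\,|\epsilon_{m+1}-\epsilon_m|\bigr),
\]
where each new correction is of order $O(1/(N^3 c_m^4))$, whose sum over $m$ is $O(1/(\sqrt{N}\log^3 N))=o(1)$ by the same Euler--Maclaurin estimate as above. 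Combining all the estimates yields $\prod\sqrt{h_{m+2}h_m/h_{m+1}^2}=1+o(1)$, and multiplying by the prefactor produces the second identity.
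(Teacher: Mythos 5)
Your proposal follows the paper's route exactly up to the key regrouping
\[
\sqrt{\frac{h_{2l}}{h_{2k+1}}}\prod_{j=0}^{k-l-1}\frac{h_{2l+2j+2}}{h_{2l+2j+1}}
=\sqrt{\frac{h_{2k}}{h_{2k+1}}}\prod_{j=0}^{k-l-1}\sqrt{\frac{h_{2l+2j+2}\,h_{2l+2j}}{h_{2l+2j+1}^{2}}},
\]
and the elementary manipulation $c_{m+1}^{2}/(c_m c_{m+2})=1+(N^{2}c_mc_{m+2})^{-1}$ with the subsequent $O(1/(\sqrt{N}\log N))$ estimate is exactly what the paper does. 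The interesting part of your write-up is that you flag, correctly, the accumulation of the $o(1)$ errors $\epsilon_m$ from Lemma~\ref{lem:hardwallnorm}(ii) over the $O(N)$ factors of the product as a genuine issue: the paper simply absorbs this into an overall $(1+o(1))$ without comment, but a raw uniform $\sup_m|\epsilon_m|\to 0$ does not obviously control $\prod_j(1+\epsilon_{2l+2j})(1+\epsilon_{2l+2j+2})/(1+\epsilon_{2l+2j+1})^2$ across $O(N)$ indices. Your proposed remedy — sharpening the boundary Laplace expansion to produce a smooth leading error $\epsilon_m=-2\rho^2/(Nc_m^2)+O((Nc_m^2)^{-2})$ and then exploiting the discrete second-difference structure — is a valid way to close this, and your bookkeeping estimate $O(1/(\sqrt{N}\log^3 N))$ for the accumulated second-difference error checks out. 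One remark: a somewhat lighter path to the same conclusion would be to show only that the one-step increments are small, e.g. $|\epsilon_{m+1}-\epsilon_m|=O(\epsilon_m/(Nc_m))$, which also follows from differentiating the Laplace integral in $m$; that already suffices because the accumulated error factor is, in log form, $\sum_j(\log(1+\epsilon_{2l+2j+2})-\log(1+\epsilon_{2l+2j+1}))+O(|\epsilon_{2l}|+|\epsilon_{2k+1}|)$, a sum of one-step increments indexed by odd $m$, so one does not strictly need the second-order term in the expansion. In either form, you are filling a step the paper leaves implicit; the rest of your argument mirrors the published proof.
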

\begin{proof}
By Lemma~\ref{lem:hardwallnorm} (ii), the ratio of the norm in \eqref{hratio:wall} is obtained directly from the following observation: for $(2\rho^2  + 2\delta_N)N \leq m <2N$
\begin{equation*}
    \frac{h_{m+1}}{h_m} = 2\rho^2 \frac{m-2\rho^2 N}{m+1 - 2\rho^2 N} (1+o(1)) = 2\rho^2 (1+o(1)).
\end{equation*}
For the latter asymptotic expansion in \eqref{hratio:wall}, we see that
\begin{align*}
    \sqrt{\frac{h_{2l}}{h_{2k+1}}}\prod_{j=0}^{k-l-1}\frac{h_{2l+2j+2}}{h_{2l+2j+1}}
    &= \sqrt{\frac{h_{2k}}{h_{2k+1}}}\prod_{j=0}^{k-l-1}\sqrt{\frac{h_{2l+2j+2}}{h_{2l+2j+1}}}\sqrt{\frac{h_{2l+2j}}{h_{2l+2j+1}}}\\
    &=\frac{1}{\sqrt{2}\rho}\prod_{m=l}^{k-1}\sqrt{\frac{2m+1-2\rho^2 N}{2m+2-2\rho^2 N} \cdot \frac{2m+1-2\rho^2 N}{2m -2\rho^2 N}}(1+o(1)).
\end{align*}
Here, the product can be approximated by $1$ since for $l,k$ with $(\rho^2  + \delta_N)N \leq l,k \leq N-1$
\begin{align*}
    \prod_{m=l}^{k-1}\frac{(2m+2-2\rho^2N)(2m-2\rho^2N)}{(2m+1-2\rho^2N)^2} = \prod_{m=l}^{k-1}\Big(1-\Big(\frac{1}{2m+1-2\rho^2 N}\Big)^2\Big) = 1 + o(1),
\end{align*}
which completes the proof.
\end{proof}

We now prove Proposition~~\ref{thm:HWS}.

\begin{proof}[Proof of Proposition~\ref{thm:HWS}]
Combining Lemmas~ \ref{lem:wmhard}, \ref{Lem_G:asym 2}, and \ref{lem:hwall}, we obtain
\begin{align*}
    &\quad G_N(z,w)
    \sim \frac{\gamma_N^3}{2\sqrt{2}\rho} \sum_{k=N(\rho^2 + \delta_N)}^{N-1} w_{2k+1}(\zeta)
    \sum_{l=N(\rho^2 + \delta_N)}^{k-1} w_{2l}(\eta)\\
    &\sim  \frac{ g_N(z) g_N(w)}{N^2(1-\rho^2)^3} \sum_{k=N(\rho^2 + \delta_N)}^{N-1}\Big(\frac{2k+1}{2N}-\rho^2\Big)^{\frac{1}{2}} e^{\frac{2}{1-\rho^2}(\frac{2k+1}{2N}-\rho^2)z} \!\!\!
    \sum_{l=N(\rho^2 + \delta_N) }^{ k-1} \!\! \Big(\frac{l}{N}-\rho^2\Big)^{\frac{1}{2}} e^{\frac{2}{1-\rho^2}(\frac{l}{N}-\rho^2)w}.
\end{align*}
Now the proposition follows from the Riemann sum approximation.
\end{proof}

\subsection*{Acknowledgement.} We wish to express our gratitude to Gernot Akemann, Nam-Gyu Kang, and Iván Parra for helpful discussions.

%%%%%%%%%%bibliography%%%%%%%%%%%%%%%%%%%%%%%%%%%%%%%%%%%
\bibliographystyle{abbrv}
\bibliography{RMTbib}
%%%%%%%%%%%%%%%%%%%%%%%%%%%%%%%%%%%%%%%%%%%%%%%%%%%%%%%%%%%%%%
\end{document}